\begin{document}

\newcommand{\lcm}{\operatorname{lcm}\nolimits}
\newcommand{\coker}{\operatorname{coker}\nolimits}
\renewcommand{\th}{\operatorname{th}\nolimits}
\newcommand{\rej}{\operatorname{rej}\nolimits}
\newcommand{\extto}{\xrightarrow}
\renewcommand{\mod}{\operatorname{mod}\nolimits}
\newcommand{\Sub}{\operatorname{Sub}\nolimits}
\newcommand{\ind}{\operatorname{ind}\nolimits}
\newcommand{\Fac}{\operatorname{Fac}\nolimits}
\newcommand{\add}{\operatorname{add}\nolimits}
\newcommand{\Hom}{\operatorname{Hom}\nolimits}
\newcommand{\Irr}{\operatorname{Irr}\nolimits}
\newcommand{\Rad}{\operatorname{Rad}\nolimits}
\newcommand{\RHom}{\operatorname{RHom}\nolimits}
\newcommand{\uHom}{\operatorname{\underline{Hom}}\nolimits}
\newcommand{\End}{\operatorname{End}\nolimits}
\renewcommand{\Im}{\operatorname{Im}\nolimits}
\newcommand{\Ker}{\operatorname{Ker}\nolimits}
\newcommand{\Coker}{\operatorname{Coker}\nolimits}
\newcommand{\Ext}{\operatorname{Ext}\nolimits}
\newcommand{\op}{{\operatorname{op}}}
\newcommand{\Ab}{\operatorname{Ab}\nolimits}
\newcommand{\id}{\operatorname{id}\nolimits}
\newcommand{\pd}{\operatorname{pd}\nolimits}
\newcommand{\ql}{\operatorname{q.l.}\nolimits}
\newcommand{\rank}{\operatorname{rank}\nolimits}
\newcommand{\A}{\operatorname{\mathcal A}\nolimits}
\newcommand{\C}{\operatorname{\mathcal C}\nolimits}
\newcommand{\D}{\operatorname{\mathcal D}\nolimits}
\newcommand{\E}{\operatorname{\mathcal E}\nolimits}
\newcommand{\X}{\operatorname{\mathcal X}\nolimits}
\newcommand{\Y}{\operatorname{\mathcal Y}\nolimits}
\newcommand{\F}{\operatorname{\mathcal F}\nolimits}
\newcommand{\Z}{\operatorname{\mathbb Z}\nolimits}
\renewcommand{\P}{\operatorname{\mathcal P}\nolimits}
\newcommand{\T}{\operatorname{\mathcal T}\nolimits}
\newcommand{\G}{\Gamma}
\renewcommand{\L}{\Lambda}
\newcommand{\bdot}{\scriptscriptstyle\bullet}

\newcommand{\mP}{\Phi^m_{\geq -1}}
\newcommand{\mD}{\Delta_m}
\newcommand{\mR}{R_m}
\newcommand{\mS}{\Sigma_m}

\renewcommand{\r}{\operatorname{\underline{r}}\nolimits}
\newcommand{\CBM}{\mathcal C_{BM}}
\newcommand{\bigamalg}{\amalg}
\newtheorem{lemma}{Lemma}[section]
\newtheorem{proposition}[lemma]{Proposition}
\newtheorem{cor}[lemma]{Corollary}
\newtheorem{thm}[lemma]{Theorem}
\newtheorem{rem}[lemma]{Remark}
\newtheorem{defin}[lemma]{Definition}
\newtheorem{example}[lemma]{Example}

\title{Coloured quiver mutation for higher cluster categories}

\author[Buan]{Aslak Bakke Buan}
\address{Institutt for matematiske fag\\
Norges teknisk-naturvitenskapelige universitet\\
N-7491 Trondheim\\
Norway}
\email{aslakb@math.ntnu.no}

\author[Thomas]{Hugh Thomas}\thanks{Both authors were supported by 
a STORFORSK-grant 167130 from the Norwegian Research Council;
the second author was also supported by an NSERC Discovery Grant.}
\address{Department of Mathematics and Statistics\\University of 
New Brunswick\\Fredericton NB\\E3B 1J4 Canada
}
\email{hthomas@unb.ca}
\maketitle

\begin{abstract}
We define mutation on coloured quivers associated to tilting objects in higher cluster
categories. We show that this operation is compatible with the mutation operation on the tilting objects.
This gives a combinatorial approach to tilting in higher cluster categories and especially an algorithm
to determine the Gabriel quivers of tilting objects in such categories.
\end{abstract}

\section*{Introduction}
A cluster category is a certain 2-Calabi-Yau orbit category of the derived category of a hereditary 
abelian category. Cluster categories were introduced in \cite{bmrrt} in order to give a categorical model for
the combinatorics of Fomin-Zelevinsky cluster algebras \cite{fz}. 
They are triangulated \cite{k}
and admit (cluster-)tilting objects,
which model the clusters of a corresponding (acyclic) cluster algebra \cite{ck}.
Each cluster in a fixed cluster algebra comes together with a finite quiver, and in the categorical model
this quiver is in fact the Gabriel quiver of the 
corresponding tilting object \cite{bmrt}.

A principal ingredient in the 
construction of a cluster algebra is 
quiver mutation. It controls the exchange procedure
which gives a rule for producing a new cluster variable and hence a new cluster from a given cluster. 
Exchange is modeled
by cluster categories in the acyclic case \cite{bmr2} in terms of a mutation rule for tilting objects, i.e. a rule
for replacing an indecomposable direct summand in a tilting object with another indecomposable
rigid object, to get a new tilting object. Quiver mutation describes the 
relation between the Gabriel quivers of the corresponding tilting objects.



Analogously to the definition of the cluster category,
for a positive integer $m$, it is natural to define a certain $m+1$-Calabi-Yau orbit 
category of the derived category of a hereditary abelian category. This is called
the {\em $m$-cluster category}. Implicitly, $m$-cluster categories was first studied in \cite{k}, 
and their (cluster-)tilting objects have been studied  
in \cite{abst,f,hj1,hj2,iy,kr1,kr2,t,w,z,zz}. Combinatorial descriptions of $m$-cluster categories 
in Dynkin type $A_n$ and $D_n$ are given 
in \cite{bm1,bm2}.   

In cluster categories the mutation rule for tilting objects
is described in terms of certain triangles called {\em exchange triangles}. 
By \cite{iy} the existence of exchange triangles generalizes to $m$-cluster categories. 
It was shown in \cite{zz,w} that there are exactly $m+1$ 
non-isomorphic complements to an almost complete
tilting object, and that
they are determined by the $m+1$ exchange triangles defined in \cite{iy}.

The aim of this paper is to give a combinatorial description of mutation
in $m$-cluster categories. {\it A priori}, one might expect to be able to
do this by keeping track of the Gabriel quivers of
the tilting objects.  However, it is easy to see that the Gabriel quivers
do not contain enough information.  

We proceed to associate to a tilting object a quiver each of whose arrows
has an associated colour $c\in \{0,\dots,m\}$.  The arrows with colour
0 form the Gabriel quiver of the tilting object. We then define a 
mutation operation on coloured quivers and show that it is compatible with 
mutation of tilting objects.  
A consequence is that the effect of an arbitrary sequence of mutations
on a tilting object in an $m$-cluster
category can be calculated by a purely combinatorial procedure.  

Our definition of a coloured quiver associated to a tilting object
makes sense in any $m+1$-Calabi-Yau category, such as for example those
studied in \cite{iy}. We hope that our constructions may shed some light
on mutation of tilting objects in this more general setting.

In section 1, we review some elementary facts about higher cluster
categories.
In section 2, we explain how to define the coloured quiver of a 
tilting object, we define coloured quiver mutation, and we state our
main theorem. In sections 3 and 4, we state some further lemmas about
higher cluster categories, and we prove certain properties of the
coloured quivers of tilting objects.  
We prove our main result in sections 5 and 6.  In sections 7 and 8 
we point out some applications. In section 9 we interpret our construction in terms
of $m$-cluster complexes. In section 10, we give an alternative
algorithm for computing coloured quiver mutation.  
Section 11 discusses the example of 
$m$-cluster categories of Dynkin type $A_n$, using the model developed
by Baur and Marsh \cite{bm1}.

We would like to thank Idun Reiten, in conversation with whom the initial idea
of this paper took shape.


\section{Higher cluster categories}\label{s:elem}

Let $K$ be an algebraically closed field, and let 
$\Gamma$ be a finite acyclic quiver with $n= n_{\Gamma}$ vertices.
Then the path algebra $H =K\Gamma$ is a hereditary finite dimensional basic $K$-algebra 

Let $\mod H$ be the category of finite dimensional left $H$-modules.
Let $\D = D^b(H)$ be the bounded derived category of $H$, and let $[i]$ be the $i$'th shift functor on $\D$. 
We let $\tau$ denote the Auslander-Reiten translate, which is an autoequivalence on $\D$ such that
we have a bifunctorial isomorphism in $\D$
\begin{equation}\label{ar}
\Hom(A,B[1]) \simeq D\Hom(B,\tau A).\end{equation}
In other words $\nu = [1] \tau$ is a Serre functor.

Let $G= \tau^{-1}[m]$.
The $m$-cluster category is the orbit category $\C =\C_m = \D/\tau^{-1}[m]$. 
The objects in $\C$ are the objects in $\D$, and two objects $X,Y$ are isomorphic in $\C$ if
and only if $X \simeq G^i Y$ in $\D$. 
The maps are given by $\Hom_{\C_m}(X,Y) = \amalg_{i \in \mathbb{Z}} \Hom_{\D}(X,G^iY)$.
By \cite{k}, the category $\C$ is triangulated and the canonical functor $\D \to \C$ is a triangle functor. We denote therefore
by $[1]$ the suspension in $\C$. The $m$-cluster category is also Krull-Schmidt and 
has an AR-translate $\tau$ inherited from $\D$, such that
the formula (\ref{ar}) still holds in $\C$. If follows that $\nu = [1] \tau$ is a Serre functor for $\C$ and that
$\C$ is $m+1$-Calabi-Yau, since $\nu \simeq [m+1]$.

The indecomposable objects in $\D$ are of the form $M[i]$, 
where $M$ is an indecomposable $H$-module and $i\in \mathbb{Z}$.  
We can choose a fundamental
domain for the action of $G= \tau^{-1}[m]$ on $\D$, 
consisting of the indecomposable
objects $M[i]$ with $0\leq i \leq m-1$, together with the objects
$M[m]$ with $M$ an indecomposable projective $H$-module.  Then each 
indecomposable object in $\C$ is isomorphic to exactly one of the 
indecomposables in this fundamental domain.  
We say that $M[d]$ has degree $d$, denoted $\delta(M[d]) = d$.
Furthermore, for an arbitrary object $X= \amalg X_i$ in  $\C_m$, we let $\Delta_d(X) = \amalg_j X_j[-d]$ be the $H$-module 
which is the (shifted) direct sum of all summands $X_j$ of $X$ with $\delta(X_j)= d$.

In the following theorem the equivalence between (i) and (ii) is shown in \cite{zz,w} and 
the equivalence between (i) and (iii) is shown in \cite{z}.
\begin{thm}
Let $T$ be an object in $\C$ satisfying $\Hom_{\C}(T,T[i]) = 0$ for $i=1,\dots,m$. Then the following are equivalent
\begin{itemize}
\item[(i)] If $\Hom_{\C}(T,U[i]) = 0$ for $i=1,\dots,m$ 
then $U$ is in $\add T$.
\item[(ii)] If $\Hom_{\C}(U \amalg T, U[i]) = 0$ for $i=1,\dots,m$ 
then $U$ is in $\add T$.
\item[(iii)] $T$ has $n$ indecomposable direct summands, up to isomorphism. 
\end{itemize}
\end{thm}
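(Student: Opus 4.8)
The plan is to prove the equivalence of (i), (ii), and (iii) by establishing the implications (i)$\Rightarrow$(ii)$\Rightarrow$(iii)$\Rightarrow$(i), using the theory of exchange triangles together with the $m+1$-Calabi-Yau property. The implication (i)$\Rightarrow$(ii) is the easiest: if $U$ satisfies $\Hom_\C(U\amalg T, U[i])=0$ for all $i$, then in particular $\Hom_\C(T,U[i])=0$, so (i) immediately forces $U\in\add T$. Thus (ii) is formally weaker than (i), and this direction requires essentially no work.

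\emph{The main obstacle} will be the implication (ii)$\Rightarrow$(iii), where one must show that a maximal rigid object (in the sense of (ii)) necessarily has exactly $n$ indecomposable summands. My approach here is to argue by contradiction using the counting of complements. Suppose $T$ is rigid and satisfies (ii) but has fewer than $n$ summands. The key input is the result cited from \cite{zz,w}: an almost complete tilting object has exactly $m+1$ complements, arising from the $m+1$ exchange triangles of \cite{iy}. Concretely, I would try to show that if $T$ has fewer than $n$ summands, one can find an indecomposable $U\notin\add T$ with $\Hom_\C(U\amalg T,U[i])=0$ for all $i=1,\dots,m$, contradicting (ii). To produce such a $U$, the natural strategy is to pass to a maximal rigid object $T'$ containing $T$ (which by the theory has $n$ summands) and select an indecomposable summand $U$ of $T'$ not lying in $\add T$; one must then verify that such $U$ has no self-extensions of degree $1,\dots,m$ and no extensions into or out of $T$ — but the self-orthogonality and the vanishing against $T$ need care, since a priori one only controls $\Hom_\C(T',T'[i])=0$. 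This is where I expect the argument to be delicate: translating the ``no larger rigid object'' hypothesis into a statement about summand count.

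For (iii)$\Rightarrow$(i), I would use the Calabi-Yau duality (\ref{ar}) to convert the vanishing conditions into a linear-algebraic counting argument. If $T$ has $n$ indecomposable summands and is rigid, and $U$ is indecomposable with $\Hom_\C(T,U[i])=0$ for $i=1,\dots,m$, the goal is to show $U\in\add T$. The natural tool is to build an exchange-type triangle resolving $U$ by $\add T$ and to exploit the $m+1$-Calabi-Yau property $\nu\simeq[m+1]$, which gives $\Hom_\C(T,U[i])\simeq D\Hom_\C(U,T[m+1-i])$. Combining the hypotheses on both degrees should force enough vanishing that $U$ must already be a summand of $T$; alternatively, one realizes that the $n$ summands of $T$ already fill out a maximal rigid configuration, leaving no room for a genuinely new indecomposable.

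Overall, the architecture rests on the two cited theorems — the existence and enumeration of exchange triangles from \cite{iy,zz,w} and the characterization from \cite{z} — so the proof is largely a matter of assembling these with the Serre duality formula. The genuinely nontrivial step is the counting argument hidden in (ii)$\Rightarrow$(iii); the other two implications are comparatively formal. I would therefore concentrate the detailed work there and treat (i)$\Rightarrow$(ii) and the duality bookkeeping in (iii)$\Rightarrow$(i) more briskly.
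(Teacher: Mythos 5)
The paper does not actually prove this theorem: it is quoted from the literature, with $(i)\Leftrightarrow(ii)$ attributed to \cite{zz,w} and $(i)\Leftrightarrow(iii)$ to \cite{z}. So the only question is whether your proposal stands on its own, and it does not. Your implication $(i)\Rightarrow(ii)$ is correct and genuinely formal (the hypothesis of (ii) contains that of (i)), but both substantive directions are left as sketches with the essential content missing. For $(ii)\Rightarrow(iii)$ you propose to ``pass to a maximal rigid object $T'$ containing $T$ (which by the theory has $n$ summands)'' and pick a summand $U$ of $T'$ outside $\add T$. This is circular twice over: first, the assertion that a rigid object can be completed to a rigid object with $n$ summands \emph{is} the theorem (it is exactly the statement that maximal rigid implies $n$ summands, i.e.\ $(ii)\Rightarrow(iii)$, and it is false in general $2$-Calabi--Yau categories without extra hypotheses, so it cannot be waved through); second, if $T$ satisfies (ii) then by definition no indecomposable $U\notin\add T$ with $U\amalg T$ rigid exists, so the object $T'$ you want to build cannot properly contain $T$ unless (iii) already fails for a reason you have not supplied. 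The actual arguments in \cite{zz,w} construct a complement for a rigid object with fewer than $n$ summands via approximation triangles and induction through the fundamental domain; none of that machinery appears in your plan.

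There is also a logical-order problem with your stated inputs: you lean on the enumeration of the $m+1$ complements of an almost complete tilting object (Proposition \ref{p:number}), but ``almost complete tilting object'' is defined in terms of tilting objects, i.e.\ in terms of the very characterization being proved, so invoking that count here risks circularity unless you are careful about which results of \cite{iy,zz,w} are logically prior. Finally, for $(iii)\Rightarrow(i)$ the Serre-duality observation $\Hom_{\C}(T,U[i])\simeq D\Hom_{\C}(U,T[m+1-i])$ is a reasonable starting point, but ``combining the hypotheses should force enough vanishing'' is not an argument: you still need to produce the mechanism (in \cite{z} this goes through the identification of rigid objects in the fundamental domain with partial tilting modules in shifted copies of $\mod H$ and a counting of summands there) that rules out an $(n+1)$-st compatible indecomposable. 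As written, the proposal correctly maps the terrain but proves only the trivial implication.
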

Here $\add T$ denotes the additive closure of $T$. A (cluster-)tilting object $T$ in an $m$-cluster 
is an object satisfying the conditions of the above Theorem.
For a tilting object $T =\amalg_{i=1}^v T_i$, with each $T_i$ indecomposable, and $T_k$ an indecomposable direct summand, 
we call $\bar{T} = T/T_k$ an almost complete tilting object. 
We let $\Irr_{\A}(X,Y)$ denote the $K$-space of irreducible maps $X \to Y$ in a Krull-Schmidt $K$-category $\A$.
The following crucial result is
proved in \cite{zz} and \cite{w}.

\begin{proposition} \label{p:number}
There are, up to isomorphism, $m+1$ complements of an almost complete tilting object. 
\end{proposition}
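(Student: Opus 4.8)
The plan is to show that an almost complete tilting object $\bar{T}$ in the $m$-cluster category $\C$ has exactly $m+1$ complements, by constructing them via the exchange triangles of \cite{iy} and then verifying that these are all of them. Let me think about how the argument should go.

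First, the existence of exchange triangles. By \cite{iy}, given the almost complete tilting object $\bar{T}$ and one complement $T_k$, there is a sequence of $m+1$ triangles. The structure should be: starting from $T_k^{(0)} = T_k$, we build a chain
$$T_k^{(i)} \to B_i \to T_k^{(i+1)} \to T_k^{(i)}[1]$$
where $B_i \in \add \bar{T}$, giving complements $T_k^{(0)}, \dots, T_k^{(m)}$. I would want to show each $T_k^{(i)}$ is a genuine complement (i.e. $\bar{T} \amalg T_k^{(i)}$ is tilting) and that after $m+1$ steps we cycle back, and that these $m+1$ objects are pairwise non-isomorphic.

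The approach splits into three parts. First, show that each $T_k^{(i)}$ produced by the exchange triangles is actually a complement. For this I would use the $m+1$-Calabi-Yau property and the characterization in Theorem 1.1(ii): I need to check $\Hom_\C(T_k^{(i)} \amalg \bar T, (T_k^{(i)} \amalg \bar T)[j]) = 0$ for $j = 1,\dots,m$. The vanishing involving $\bar T$ with itself is given. The mixed terms and the self-extensions of $T_k^{(i)}$ should be computed by applying $\Hom_\C(-,?)$ and $\Hom_\C(?,-)$ to the exchange triangles and chasing the long exact sequences, using that the connecting objects $B_i$ lie in $\add \bar T$, together with the Serre duality isomorphism (\ref{ar}). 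Second, I would show the list $T_k^{(0)},\dots,T_k^{(m)}$ is exhaustive: any complement $U$ must fit into one of the exchange triangles, so that starting from $U$ and running the exchange procedure one recovers a $T_k^{(i)}$; equivalently, one shows the complements form a single "cycle" of length $m+1$ under the exchange operation. Third, I would verify the complements are pairwise non-isomorphic, presumably by a degree/homological argument distinguishing the $T_k^{(i)}$ — for instance, tracking how the position in the fundamental domain (the degree $\delta$) shifts as $i$ increases, so that no two coincide.

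The hard part will be the exhaustiveness, i.e. showing there are no complements beyond the $m+1$ produced by iterating the exchange triangles. Existence of the $m+1$ complements is essentially a direct consequence of \cite{iy} plus a Calabi-Yau computation, but ruling out others requires a global argument. The natural strategy is to fix an arbitrary complement $U$ and produce an exchange triangle $U \to B \to U' \to U[1]$ with $B \in \add \bar T$ the minimal right $\add\bar T$-approximation of $U[1]$ (or left approximation of $U$), show $U'$ is again a complement, and prove that this "mutation" is a bijection on the set of complements whose orbit has size exactly $m+1$. One must check that $B$ is nonzero and that the procedure genuinely moves to a different complement unless we have returned to the start after $m+1$ steps; controlling the degrees via $\delta$ and $\Delta_d$ should force periodicity exactly $m+1$. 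I expect the bookkeeping in this periodicity argument — showing the orbit closes up after precisely $m+1$ steps and not fewer — to be the most delicate point, and it is likely where the detailed input from \cite{zz,w} is really needed.
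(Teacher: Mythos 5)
The first thing to note is that the paper does not prove this proposition at all: it is stated as a result quoted from \cite{zz} and \cite{w}, with the exchange triangles themselves imported from \cite{iy}. So there is no in-paper argument to match yours against; the relevant comparison is with the strategy of those references, which your outline does broadly reflect. That said, as a proof your proposal has genuine gaps at exactly the two points that carry all the content. First, exhaustiveness and the claim that the exchange orbit has length exactly $m+1$ are only described, not argued: you say one should show the mutation is a bijection on complements whose orbit closes up after precisely $m+1$ steps, and you yourself flag that this is ``likely where the detailed input from \cite{zz,w} is really needed.'' That is the theorem; without it the proposal establishes at most that there are \emph{at least} some complements reachable by iterating the triangles of \cite{iy}, not that there are exactly $m+1$.

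Second, your proposed mechanism for pairwise non-isomorphism --- tracking the degree $\delta$ in the fundamental domain so that ``no two coincide'' --- would fail as stated. By Lemma \ref{l:div}(c) of the paper, the degrees of the $m+1$ complements need not be distinct: one admissible distribution has no complement in degree $m$ and \emph{two} complements sharing a single degree $d\neq m$. Moreover the degree increments $\delta(T_k^{(c+1)})-\delta(T_k^{(c)})$ are only bounded ($\leq 1$ or $\leq 2$ depending on the case), not equal to $1$, so degree alone cannot separate the complements. Distinguishing them requires a finer homological argument (e.g.\ the computation $\Hom(T_k^{(c)},T_k^{(c')}[t])\cong K$ iff $c'-c+t\equiv 0 \pmod{m+1}$ of Lemma \ref{l:div}(e), which detects $c'$ from $c$), and that computation itself presupposes the structure you are trying to establish. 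In short: the skeleton is the right one, but the two load-bearing steps are asserted rather than proven, and the one concrete technique you propose for non-isomorphism is contradicted by the actual degree distribution of complements.
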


Let $T_k$ be an indecomposable direct summand in an $m$-cluster tilting object $T = \bar{T} \amalg T_k$.
The complements of $\bar{T}$ are denoted $T_k^{(c)}$ for $c= 0,1, \dots ,m$, where
$T_k = T_k^{(0)}$. 
By \cite{iy}, there are $m+1$ exchange triangles $$T_k^{(c)} \overset{f_k^{(c)}}{\to} B_k^{(c)} 
\overset{g_k^{(c+1)}}{\to} T_k^{(c+1)}  
\overset{h_k^{(c+1)}}{\to}.$$ Here the $B_k^{(c)}$ are in $\add (T/T_k)$ and the maps $f_k^{(c)}$ (resp. $g_k^{(c)}$) are 
minimal left (resp. right) $\add (T/T_k)$-approximations, and hence not split mono or split epi. Note that by minimality,
the maps $f_k^{(c)}$ and $g_k^{(c)}$ have no proper zero summands.

\section{Coloured quiver mutation}

We first recall the definition of quiver mutation, formulated in \cite{fz} in terms of 
skew-symmetric matrices. Let $Q = (q_{ik})$ be a quiver with vertices $1, \dots, n$ and with no loops or 
oriented two-cycles, where $q_{ik}$ denotes the number of arrows from $i$ to $k$. 
Let $j$ be a vertex in $Q$. 
Then, a new quiver $\mu_j(Q)= \widetilde{Q} = (\tilde{q}_{ik})$ is defined 
by the following data 
 
$$\tilde{q}_{ik} = 
\begin{cases}  q_{ki} & \text{  if $j =k$ or $j=i$} \\
		 \max \{0, q_{ik} - q_{ki} + q_{ij} q_{jk} - q_{kj} q_{ji} \} & \text{  if $i \neq j \neq k$} 
                 \end{cases}
$$
It is easily verified that this definition is equivalent to the one of Fomin-Zelevinsky.

Now we consider coloured quivers. Let $m$ be a positive integer. An $m$-coloured (multi-)quiver
$Q$ consists of vertices $1, \dots, n$ and coloured arrows $i \overset{(c)}{\longrightarrow} j$,
where $c \in \{0, 1, \dots, m \}$. Let $q_{ij}^{(c)}$ denote the number of arrows from 
$i$ to $j$ of colour $(c)$.

We will consider coloured quivers with the following additional conditions.

\begin{itemize}
\item[(I)] No loops: $q_{ii}^{(c)} = 0$ for all $c$.
\item[(II)] Monochromaticity: If $q_{ij}^{(c)} \neq 0$, then $q_{ij}^{(c')} = 0$ for $c \neq c'.$
\item[(III)] Skew-symmetry: $q_{ij}^{(c)} = q_{ji}^{(m-c)}$.
\end{itemize}

We will define an operation on a coloured quiver $Q$ satisfying  
the above conditions. Let $j$ be a vertex in $Q$ and let $\mu_j(Q) = \widetilde{Q}$ be the coloured quiver defined by

$$\tilde{q}_{ik}^{(c)} = 
\begin{cases}  q_{ik}^{(c+1)} & \text{  if $j =k$} \\
		 q_{ik}^{(c-1)} &\text{  if $j=i$} \\
		 \max \{0, q_{ik}^{(c)} - \sum_{t \neq c} q_{ik}^{(t)} + (q_{ij}^{(c)} - q_{ij}^{(c-1)}) q_{jk}^{(0)} 
		 + q_{ij}^{(m)} (q_{jk}^{(c)}  -q_{jk}^{(c+1)}) \} & \text{  if $i \neq j \neq k$} 
                 \end{cases}
$$

In an $m$-cluster category $\C$, for every tilting object $T = \amalg_{i=1}^n T_i$, with the $T_i$
indecomposable, we will define a corresponding $m$-coloured quiver $Q_T$,
as follows.

Let $T_i, T_j$ be two non-isomorphic indecomposable direct summands of the $m$-cluster tilting object $T$ and
let $r_{ij}^{(c)}$ denote the multiplicity of $T_j$ in $B_i^{(c)}$.
We define the $m$-coloured quiver $Q_T$ of $T$ to have vertices $i$ corresponding to indecomposable direct summands $T_i$,
and $q_{ij}^{(c)} = r_{ij}^{(c)}$. 
Note, in particular, that the $(0)$-coloured arrows are the arrows from
the Gabriel quiver for the endomorphism ring of $T$.

By definition, $Q_T$ satisfies condition
(I). We show in Section \ref{s:higher} that (II) is satisfied (this also follows from \cite{zz}), and in Section \ref{s:symmetry} 
that (III) is also satisfied.  

The aim of this paper is to prove the following theorem, which is a generalization of the main result
of \cite{bmr2}. 

\begin{thm}\label{t:main}
Let $T = \amalg_{i=1}^n T_i$ and $T' = T/T_j \amalg T_j^{(1)}$ be $m$-tilting objects, where
there is an exchange triangle $T_j \to B_j^{(0)} \to T_j^{(1)} \to$. Then $Q_{T'} = \mu_j (Q_{T})$.
\end{thm}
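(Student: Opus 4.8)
The plan is to verify the identity $Q_{T'}=\mu_j(Q_T)$ separately for the three cases appearing in the definition of coloured quiver mutation: the arrows ending at $j$, the arrows starting at $j$, and the arrows between two vertices $i,k$ both distinct from $j$. Throughout I would freely use the monochromaticity and skew-symmetry of coloured quivers of tilting objects, established in Sections \ref{s:higher} and \ref{s:symmetry}, since the piecewise formula for $\mu_j$ is visibly designed to preserve these properties, and since they let me deduce one adjacent case from the other.

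For the arrows incident to $j$ the decisive observation is that $T$ and $T'$ share the almost complete tilting object $\bar T = T/T_j = T'/T_j^{(1)}$. Hence, by Proposition \ref{p:number}, they have the same $m+1$ complements $T_j^{(0)},\dots,T_j^{(m)}$ and literally the same exchange triangles $T_j^{(c)}\to B_j^{(c)}\to T_j^{(c+1)}$; passing from $T$ to $T'$ merely changes which complement is distinguished, from $T_j^{(0)}=T_j$ to $T_j^{(1)}$. Since $\tilde q_{jk}^{(c)}$ is read off from the middle term of the $c$-th exchange triangle of the new distinguished complement, this is a cyclic reindexing of the middle terms $B_j^{(c)}$ by one colour, which (after matching conventions) is exactly the shift prescribed by the first two cases of $\mu_j$. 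I would establish one of these two cases directly by this reindexing, and then obtain the other from skew-symmetry (III) for $Q_{T'}$, which interchanges an arrow $i\to j$ of colour $c$ with an arrow $j\to i$ of colour $m-c$.

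The substantial case is $i\neq j\neq k$, where the approximating subcategory over which the exchange triangles of $T_i$ are formed changes from $\add(T/T_i)$ to $\add(T'/T_i)$, the two differing only in that the summand $T_j$ is replaced by $T_j^{(1)}$. My plan is to start from the exchange triangle $T_i^{(c)}\to B_i^{(c)}\to T_i^{(c+1)}$ for $T$, to split off the $q_{ij}^{(c)}=r_{ij}^{(c)}$ copies of $T_j$ occurring in $B_i^{(c)}$, and to splice in the exchange triangle $T_j\to B_j^{(0)}\to T_j^{(1)}$ by repeated use of the octahedral axiom. This should convert a minimal left $\add(T/T_i)$-approximation into a left $\add(T'/T_i)$-approximation whose middle term acquires $q_{ij}^{(c)}q_{jk}^{(0)}$ new copies of $T_k$ from the spliced-in $B_j^{(0)}$'s; dually, the exchange triangle at $j$ feeds $T_k$ into the construction through the summand $T_i$ of $B_j^{(0)}$ (counted by $q_{ij}^{(m)}=q_{ji}^{(0)}$ via skew-symmetry), producing the copies governed by $q_{ij}^{(m)}q_{jk}^{(c)}$. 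The refined combinations $(q_{ij}^{(c)}-q_{ij}^{(c-1)})q_{jk}^{(0)}$ and $q_{ij}^{(m)}(q_{jk}^{(c)}-q_{jk}^{(c+1)})$ would then emerge from the cancellation between copies created and destroyed as the colour index $c$ varies across the family of exchange triangles, while the term $-\sum_{t\neq c}q_{ik}^{(t)}$ together with the outer $\max\{0,\cdot\}$ records the passage to the minimal, split-summand-free approximation, monochromaticity (II) guaranteeing that at most one colour survives between $i$ and $k$.

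The main obstacle I expect is precisely this last bookkeeping: proving that after the octahedral surgery the resulting triangle is genuinely the minimal exchange triangle for $T'$, so that copies of $T_k$ of competing colours cancel in exactly the pattern encoded by $-\sum_{t\neq c}q_{ik}^{(t)}$ and the truncation $\max\{0,\cdot\}$. This requires controlling minimality of the approximations under the construction and invoking the structural results of Sections \ref{s:higher} and \ref{s:symmetry}, together with the vanishing of the relevant $\Hom_{\C}$-spaces in the $m+1$-Calabi-Yau category $\C$, to ensure that the spliced map has no proper split summands and that the two detour contributions do not interfere. Once the middle term of each new exchange triangle is identified, reading off the multiplicity of $T_k$ yields $\tilde q_{ik}^{(c)}$ and completes the verification.
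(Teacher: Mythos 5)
Your plan follows the paper's own proof essentially step for step: the arrows incident to $j$ are handled by observing that $T$ and $T'$ share the complements and exchange triangles of $T/T_j$, so the colours just shift cyclically (with skew-symmetry giving the dual case), and the case $i\neq j\neq k$ is handled by splicing the exchange triangle $T_j\to B_j^{(0)}\to T_j^{(1)}$ into the exchange triangles at $i$ via the octahedral axiom (Lemma \ref{l:comp}) and tracking multiplicities of $T_k$ through the resulting non-minimal $\add(T'/T_i)$-approximations, with the colour-$m$ contribution recovered from the symmetry of Proposition \ref{p:symmetry}. The bookkeeping you flag as the main obstacle is exactly what Sections 5 and 6 of the paper are devoted to, the decisive extra ingredient being Proposition \ref{p:limits}, which pins the colour of any arrow $i\to k$ to $\{e,e+1\}$ whenever $q_{ij}^{(e)}>0$ and $q_{jk}^{(0)}>0$, and thereby makes the cancellation pattern $-\sum_{t\neq c}q_{ik}^{(t)}$ and the truncation $\max\{0,\cdot\}$ verifiable case by case.
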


In the case $m=1$ the coloured quiver of a tilting object $T$ is given by
$q_{ij}^{(0)} = \bar{q}_{ij}$ and
$q_{ij}^{(1)} = \bar{q}_{ji}$ where
$\bar{q}_{ij}$ denotes the number of arrows in the Gabriel quiver of $T$.
Then coloured mutation of the coloured quiver corresponds to FZ-mutation of the Gabriel quiver.

\bigskip
{\bf Example: $A_3$, $m= 2$}

Let $\Gamma$ be $A_3$ with linear orientation, i.e. the quiver
$1 \leftarrow 2 \to 3$.

The AR-quiver of the 2-cluster category of $H = K\Gamma$ is
$$
\xymatrix@!@C=0.02cm@R=0.4cm{
 & P_1 \ar[dr] & & {I_3} \ar[dr] & & *++[o][F-]{P_3[1]} \ar[dr] & & I_1[1] \ar[dr] & & P_1[2]\ar[dr] \\
 P_2[2] \ar[ur] \ar[dr] & & {P_2} \ar[ur] \ar[dr] & & *+[o][F-]{I_2} \ar[ur] \ar[dr] & & P_2[1] \ar[ur] \ar[dr] & &  I_2[1]\ar[ur]\ar[dr] & &P_2[2]   \\
 & {P_3} \ar[ur] & & *+[o][F-]{I_1} \ar[ur] & & P_1[1] \ar[ur] & & I_3[1]\ar[ur]& &
P_3[2] \ar[ur]& & 
}
$$
The direct sum $T= I_1 \amalg I_2 \amalg P_3[1]$ of the encircled indecomposable objects
gives a tilting object.
Its coloured quiver is 
$$
\xymatrix{
I_1 \ar@<0.6ex>^{(0)}[r] & I_2 \ar@<0.6ex>^{(0)}[r] \ar@<0.6ex>^{(2)}[l] & 
P_3[1] \ar@<0.6ex>^{(2)}[l] 
}
$$ 
Now consider the exchange triangle $$I_2 \to P_3[1] \to I_3[1] \to$$
and the new tilting object $T' = P_1\amalg I_3[1] \amalg P_3[1]$.
The coloured quiver of $T'$ is 
$$\xymatrix{
I_1 \ar@<0.6ex>_{(0)}@/_2.5pc/[rr] \ar@<0.6ex>^{(1)}[r] & I_3[1] \ar@<0.6ex>^{(2)}[r] \ar@<0.6ex>^{(1)}[l] & 
P_3[1] \ar@<0.6ex>^{(0)}[l] 
\ar@<0.6ex>^{(2)}@/^3.5pc/[ll]
}$$

\section{Further background on higher cluster categories}\label{s:higher}

In this section we summarize some further known results about $m$-cluster categories.
Most of these are from \cite{z} and \cite{zz}. We include some proofs for the convenience of the reader.

Tilting objects in $\C = \C_m$ give rise to partial tilting modules in $\mod H$, where
a {\em partial tilting module} $M$ in $\mod H$, is a module with $\Ext^1_H(M,M) = 0$.

\begin{lemma}\label{l:partial}
\begin{itemize}
\item[(a)] When $T$ is a tilting object in $\C_m$, then each $\Delta_d(T)$ is a partial tilting module in $\mod H$.
\item[(b)] The endomorphism ring of a partial tilting module has no oriented cycles in its ordinary quiver.
\end{itemize}
\end{lemma}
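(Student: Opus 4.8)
My plan is to prove the two parts separately: (a) by transporting the rigidity of $T$ in $\C$ down to $\mod H$, and (b) by reducing to the classical directedness of endomorphism rings of rigid modules over hereditary algebras.

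For (a), fix a degree $d$ and write $\Delta_d(T) = \amalg_a M_a$, where the $M_a$ are exactly the indecomposable $H$-modules such that $T_a := M_a[d]$ is a summand of $T$ of degree $d$. It suffices to show $\Ext^1_H(M_a,M_b) = 0$ for every pair $a,b$ (the case $a=b$ included). The key identity is that, since $[d]$ is an autoequivalence of $\D$,
$$\Ext^1_H(M_a,M_b) = \Hom_{\D}(M_a, M_b[1]) = \Hom_{\D}(M_a[d], M_b[d+1]),$$
and the right-hand side is precisely the $i=0$ term in the coproduct
$$\Hom_{\C}(T_a, T_b[1]) = \amalg_{i \in \Z} \Hom_{\D}(M_a[d], G^i M_b[d+1]).$$
Since $T_a, T_b \in \add T$ and $T$ is a tilting object, $\Hom_{\C}(T_a, T_b[1]) = 0$; as the Hom-space in $\C$ is a direct sum over $i$, each term vanishes, in particular the $i=0$ term, giving $\Ext^1_H(M_a,M_b)=0$. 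Summing over $a,b$ yields $\Ext^1_H(\Delta_d(T),\Delta_d(T))=0$, so $\Delta_d(T)$ is partial tilting. I expect this part to be routine.

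For (b), let $N$ be a partial tilting module, which I may take basic, and set $\L = \End_H(N)$. Its indecomposable summands $N_i$ are exceptional (indecomposable and rigid), and over a hereditary algebra such modules are bricks, so $\End_H(N_i)=K$ and the quiver of $\L$ has no loops. To exclude longer oriented cycles I would invoke the Happel--Ringel lemma: for indecomposables $X,Y$ over a hereditary algebra with $\Ext^1_H(Y,X)=0$, every nonzero map $X \to Y$ is a monomorphism or an epimorphism. An oriented cycle in the quiver of $\L$ supplies a cyclic chain of nonzero non-isomorphisms between summands of $N$, each of which, by rigidity together with Happel--Ringel, is therefore a proper mono or a proper epi.

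The main obstacle is that a naive dimension count does not close the argument: a cycle may mix monomorphisms and epimorphisms so that the length changes cancel, and the composite of the irreducible maps around the cycle is typically zero, producing no forbidden self-map of a brick. The clean way around this, which I would adopt, is to complete $N$ to a tilting module $T = N \amalg N'$ via Bongartz's lemma and to use the classical fact that the quiver of the tilted algebra $\End_H(T)$ has no oriented cycles. Since $\End_H(N) = e\,\End_H(T)\,e$ for the idempotent $e$ cutting out the summands of $N$, and since acyclicity passes to such idempotent subalgebras (choose a total order on the vertices with all arrows increasing and restrict it to the chosen subset), the quiver of $\End_H(N)$ is acyclic as well.
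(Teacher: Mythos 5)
Your proposal is correct and follows the paper's route: for (a) the paper says the claim is ``obvious from the definition,'' and your computation (the $i=0$ summand of $\Hom_{\C}(T_a,T_b[1])=\amalg_i\Hom_{\D}(M_a[d],G^iM_b[d+1])$ is exactly $\Ext^1_H(M_a,M_b)$, which vanishes by rigidity of $T$) is precisely that observation spelled out. For (b) the paper simply cites Happel--Ringel (Cor.~4.2 of \cite{hr}); your Bongartz-completion argument, reducing to the acyclicity of the quiver of a tilted algebra and passing to the idempotent subalgebra, is a correct standard proof of that cited fact rather than a different approach.
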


\begin{proof}
(a) is obvious from the definition.
See \cite[Cor. 4.2]{hr} for (b).
\end{proof}

In the following note that degrees of objects are always considered with a fixed choice of fundamental domain,
and sums and differences of degrees are always computed modulo $m+1$.

\begin{lemma}[\cite{z,zz}]\label{l:div}
Assume $m > 1$.
\begin{itemize}
\item[(a)] $\End(X) \simeq K$ for any indecomposable exceptional object $X$.
\item[(b)] We have that $$\delta(T_i^{(c+1)})-\delta(T_i^{(c)}) 
\begin{cases}
= 1 & \text{if } \delta(T_i^{(c)}) =m \\
\leq 1 & \text{if } \delta(T_i^{(c)}) \not \in \{m-1,m \} \\ 
\leq 2 &\text{if } \delta(T_i^{(c)}) =m-1 
\end{cases}$$
\item[(c)] The distribution of degrees of complements is one of the following
\begin{itemize}
\item[-] there is exactly one complement of each degree, or
\item[-] there is no complement of degree $m$, two complements in one degree $d \neq m$, and exactly
one complement in all degrees $\neq d,m$.
\end{itemize}
\item[(d)] If $\Hom(T_i^{(c)}, T_i^{(c')}) \neq 0$, then $c' \in \{c, c+1,c+2 \}$.
\item[(e)] For $t \in \{1, \dots, m \}$ we have 
$$\Hom(T_i^{(c)}, T_i^{(c')}[t]) = \begin{cases} K & \text{ if $c'-c+t = 0 (\mod m+1)$} \\ 0 & \text{ else} \end{cases}$$
\end{itemize}
\end{lemma}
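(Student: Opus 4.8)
The plan is to handle (a) by pushing all relevant $\Hom$-computations in $\C$ down to $\mod H$, and then to deduce (b)--(e) from the exchange triangles of \cite{iy} together with the $(m+1)$-Calabi--Yau duality $\Hom_\C(A,B[t])\cong D\Hom_\C(B,A[m+1-t])$, carrying out the degree bookkeeping inside $\D$.

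For (a) I would represent $X$ by $M[d]$ in the fundamental domain, with $M$ an indecomposable module, and compute $\End_\C(X)=\bigoplus_{k\in\Z}\Hom_\D(M,\tau^{-k}M[mk])$. Over the hereditary algebra $H$, $\tau^{-1}$ raises homological degree by $0$ or $1$ (a non-injective module goes to a module, an injective $I$ goes to $P[1]$) and $\tau$ lowers it by $0$ or $1$; writing $\tau^{-k}M=M_k[s_k]$ gives $s_k\ge 0$ for $k\ge 1$ and $s_k\le 0$ for $k\le -1$. Since $\Hom_\D$ between shifts of modules is concentrated in degrees $0$ and $1$, the $k$-th summand vanishes unless $s_k+mk\in\{0,1\}$, and because $m>1$ this forces $k=0$. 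Hence $\End_\C(X)=\End_H(M)$; as $X$ exceptional forces $\Ext^1_H(M,M)=0$, $M$ is a brick and $\End_H(M)=K$ by \cite{hr}.

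The cleanest substantial part is (e), which I would prove by an induction powered by the exchange triangles. Since $\bar T\amalg T_i^{(c)}$ is a tilting object, $\Hom_\C(T_i^{(c)},B[s])=0$ for every $B\in\add\bar T$ and $s\in\{1,\dots,m\}$; applying $\Hom_\C(T_i^{(c)},-)$ to the exchange triangle $T_i^{(c'-1)}\to B_i^{(c'-1)}\to T_i^{(c')}\to T_i^{(c'-1)}[1]$ therefore yields
\[
\Hom_\C(T_i^{(c)},T_i^{(c')}[t])\cong \Hom_\C(T_i^{(c)},T_i^{(c'-1)}[t+1]),\qquad 1\le t\le m-1,
\]
so that the quantity $(c'-c)+t$ is invariant under the reduction. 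The two base cases are $c'=c$ (where rigidity of $T_i^{(c)}$ together with (a) gives the answer) and $(c'-c,t)=(1,m)$: for the latter I would apply $\Hom_\C(-,T_i^{(c)}[1])$ to the triangle $T_i^{(c)}\to B_i^{(c)}\to T_i^{(c+1)}\to$, note that the relevant middle term vanishes by rigidity, and use that the composite $T_i^{(c)}\to B_i^{(c)}\to T_i^{(c)}$ is non-invertible, hence zero since $\End_\C(T_i^{(c)})=K$, to conclude $\Hom_\C(T_i^{(c+1)},T_i^{(c)}[1])\cong K$ and thus $\Hom_\C(T_i^{(c)},T_i^{(c+1)}[m])\cong K$ by duality. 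Large values of $t$ are then converted to small ones by the Calabi--Yau duality, and since the only multiple of $m+1$ in the range of possible values of $(c'-c)+t$ is $m+1$ itself, the invariant determines exactly when the answer is $K$.

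For (b) and (c) I would lift each exchange triangle to $\D$ with all objects in the fundamental domain and read off $\delta(T_i^{(c+1)})$ as the degree of the third term; the degrees of the summands of $B_i^{(c)}\in\add\bar T$ are fixed, while the wrap-around of the shift at the top of the domain (a non-projective $M[m-1]$ becomes $(\tau M)[0]$, a projective $P[m]$ becomes the corresponding injective in degree $0$) is what produces the exceptional jump of $2$ at degree $m-1$; summing the jumps over the full cycle $c=0,\dots,m$, whose total is $0$ modulo $m+1$, then forces the dichotomy in (c). Part (d) I would obtain by combining these degree constraints with the fundamental-domain computation of $\Hom_\C(M[d],N[d'])$, using (e) to kill the connecting terms in the relevant long exact sequences. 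I expect this degree bookkeeping to be the main obstacle: one must control, uniformly in $c$ and independently of $\bar T$, how the cone of an $\add\bar T$-approximation sits in the Auslander--Reiten quiver of $\D$, and it is exactly the two wrap-around phenomena at degrees $m-1$ and $m$ that account for both the ``$\le 2$'' clause of (b) and the two alternatives of (c).
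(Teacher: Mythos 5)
Your treatment of (a) and (e) coincides with the paper's own argument: for (a) the same orbit-category computation reducing $\End_{\C}(X)$ to $\End_H(M)$, and for (e) the same scheme --- the base case $\Hom(T_i^{(c+1)},T_i^{(c)}[1])\simeq K$ extracted from the exchange triangle using that $f_i^{(c)}$ is not split mono and $\End(T_i^{(c)})\simeq K$, then propagation along exchange triangles preserving $(c'-c)+t$, with a Serre-duality flip to handle the remaining range. Parts (b) and (c) are cited results which the paper itself treats with the same brevity as your degree-bookkeeping sketch, so I will not press on those.

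The genuine gap is in (d). Degree constraints coming from (c), the fundamental-domain computation of $\Hom_{\C}(M[d],N[d'])$, and part (e) together dispose of every case except $c'=c-1$ with $\delta(T_i^{(c)})=\delta(T_i^{(c-1)})$, and none of the tools you list can close that case. Applying $\Hom(-,T_i^{(c-1)})$ to the exchange triangle $T_i^{(c-1)}\to B_i^{(c-1)}\to T_i^{(c)}\to$ leaves you needing $\Hom(B_i^{(c-1)},T_i^{(c-1)})=0$, and this is not a term that (e) can kill: $B_i^{(c-1)}$ lies in $\add(T/T_i)$ rather than among the complements, and rigidity only gives the vanishing of $\Hom(B_i^{(c-1)},T_i^{(c-1)}[t])$ for $t\in\{1,\dots,m\}$, not for $t=0$. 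The paper closes this case with a different ingredient: since the two complements lie in the same degree, the exchange triangle is induced from a short exact sequence in $\mod H$, so $B_i^{(c-1)}\amalg T_i^{(c-1)}$ is a partial tilting $H$-module; a nonzero map $B_i^{(c-1)}\to T_i^{(c-1)}$ would then, together with the nonzero components of the approximation $T_i^{(c-1)}\to B_i^{(c-1)}$, produce an oriented cycle in its endomorphism quiver, contradicting Happel--Ringel (Lemma \ref{l:partial}(b)). You invoke Happel--Ringel for (a) but never for (d); without this (or an equivalent) argument your proof of (d) does not close.
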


\begin{proof}
(a) follows from the fact that $\Hom_H(X,X) =K$ for exceptional objects and the definition of maps in a $m$-cluster 
category.

(b) follows from the fact that $\Hom(T_i^{(c+1)}, T_i^{(c)}[1]) \neq 0$, since in the exchange triangles, the 
$f_i^{(c)}$ are not split mono and (c) follows from (b).

Considering the two different possible distributions of complements, we obtain from (c) 
that if $m \geq 3$ and $c' \geq c+3$ and $c' \neq c-1$, then 
$\Hom(T_i^{(c)}, T_i^{(c')}) =  0$. Consider the case $c' = c-1$. We can assume $m>2$, since else the statement 
is void. Hence we can clearly assume that $\delta(T_i^{(c)}) = \delta(T_i^{(c-1)})$.
There is an exchange triangle induced from an exact sequence in $\mod H$,

$$T_i^{(c-1)} \to B_i^{(c-1)} \to T_i^{(c)} \to T_i^{(c-1)}[1].$$
It is clear that $\Hom(T_i^{(c-1)}[1], T_i^{(c-1)}) = 0$, since $m>2$. We claim that
also $\Hom(B_i^{(c-1)}, T_i^{(c-1)}) = 0$.
This holds since $B_i^{(c-1)} \amalg T_i^{(c-1)}$ is a partial tilting object in $H$, and so 
there are no cycles in the endomorphism ring, by Lemma \ref{l:partial}. Hence also $\Hom(T_i^{(c)}, T_i^{(c-1)}) =  0$
follows, and this finishes the proof for (d).

For (e) we first apply $\Hom(T_i^{(c+1)}, \ )$ to the exchange triangle
$$T_i^{(c)} \to B_i^{(c)} \to T_i^{(c+1)} \to $$
and consider the corresponding long-exact sequence, to obtain that
$$\Hom(T_i^{(c+1)}, T_i^{(c)}[t]) = \begin{cases} K & \text{ if $t = 1$} \\ 0 & \text{ if $t=0$ or $t \in \{2, \dots, m \}$} \end{cases}.$$
Now consider $\Hom(T_i^{(c+u)}, T_i^{(c)}[v])$.  
When $0<v \leq u \leq m$, we have that
\begin{multline*}
\Hom(T_i^{(c+u)}, T_i^{(c)}[v]) \simeq \Hom(T_i^{(c+u+1)}, T_i^{(c)}[v+1]) \simeq \\
\Hom(T_i^{(c-1)}, T_i^{(c)}[v+m-u]) \simeq \Hom(T_i^{(c)}, T_i^{(c-1)}[1+u-v]). \end{multline*}

When $m \geq v>u \geq 0$, we have that
$$
\Hom(T_i^{(c+u)}, T_i^{(c)}[v]) \simeq \Hom(T_i^{(c+u-1)}, T_i^{(c)}[v-1]) \simeq \\
\Hom(T_i^{(c)}, T_i^{(c)}[v-u]). $$

Combining these facts, (e) follows.
\end{proof}

\begin{lemma}\label{l:div2}
The following statements are equivalent
\begin{itemize}
\item[(a)] $\Hom(T_i^{(1)}, T_j^{(1)}[1]) = 0$ 
\item[(b)] $T_j$ is not a direct summand in $B_i^{(m)}$
\item[(c)] $T_i$ is not a direct summand in $B_j^{(0)}$
\end{itemize}
Furthermore, $\Hom(T_i^{(c)}, T_j^{(1)}[1]) = 0$ for $c \neq 1$.
\end{lemma}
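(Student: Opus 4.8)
The plan is to handle the ``furthermore'' clause and the equivalence (a)$\Leftrightarrow$(c) by one long--exact--sequence computation, and to deduce (b)$\Leftrightarrow$(c) separately by identifying the exchange approximations with genuine almost split maps in $\add T$. Throughout I assume $m\ge 2$ as in Lemma~\ref{l:div} (the case $m=1$ being the classical Fomin--Zelevinsky situation), and note that $i\ne j$, since $B_i^{(c)}\in\add(T/T_i)$ forces the statements to concern distinct summands.

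First I would apply $\Hom(T_i^{(c)},-)$ to the colour-$0$ exchange triangle $T_j^{(0)}\to B_j^{(0)}\to T_j^{(1)}\to T_j^{(0)}[1]$ for $j$, shifted once, giving the segment
\[
\Hom(T_i^{(c)},B_j^{(0)}[1])\to\Hom(T_i^{(c)},T_j^{(1)}[1])\to\Hom(T_i^{(c)},T_j^{(0)}[2]).
\]
The right-hand term is $\Hom(T_i^{(c)},T_j[2])$, which vanishes: as $i\ne j$, $T_j$ lies in $\add(T/T_i)$, and $T_i^{(c)}$ is a complement of $T/T_i$, so the two are summands of a common tilting object and $\Hom$ vanishes in degrees $1,\dots,m$ (using $2\le m$). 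Writing $B_j^{(0)}=\amalg_{l\ne j}T_l^{r_{jl}^{(0)}}$, the summands with $l\ne i$ kill $\Hom(T_i^{(c)},B_j^{(0)}[1])$ by the same complement argument, while the copies of $T_i$ contribute $\Hom(T_i^{(c)},T_i^{(0)}[1])$, which by Lemma~\ref{l:div}(e) equals $K$ precisely when $c=1$ and is $0$ otherwise. For $c\ne 1$ this yields $\Hom(T_i^{(c)},T_j^{(1)}[1])=0$, the ``furthermore'' clause. For $c=1$ the neighbouring term $\Hom(T_i^{(1)},T_j[1])$ also vanishes, so the first map above is an isomorphism and $\Hom(T_i^{(1)},T_j^{(1)}[1])\cong\Hom(T_i^{(1)},B_j^{(0)}[1])\cong K^{r_{ji}^{(0)}}$. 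Hence (a) holds iff $r_{ji}^{(0)}=0$, i.e.\ iff (c) holds.

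For (b)$\Leftrightarrow$(c) I would show both sides count $\dim_K\Irr_{\add T}(T_j,T_i)$. The essential input is Lemma~\ref{l:div}(a): $\End(T_i)\cong K$, so its radical is zero and the Gabriel quiver of $\End_{\C}(T)$ has no loops. Therefore the minimal right almost split map in $\add T$ ending at $T_i$ already has domain in $\add(T/T_i)$; every map to $T_i$ from $\add(T/T_i)$ is non-split-epi and factors through it, so by minimality it coincides with the approximation $g_i^{(0)}\colon B_i^{(m)}\to T_i$ of the colour-$m$ exchange triangle. Reading off multiplicities gives $r_{ij}^{(m)}=\dim_K\Irr_{\add T}(T_j,T_i)$. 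Dually $f_j^{(0)}\colon T_j\to B_j^{(0)}$ is the minimal left almost split map from $T_j$, so $r_{ji}^{(0)}=\dim_K\Irr_{\add T}(T_j,T_i)$. Thus $r_{ij}^{(m)}=r_{ji}^{(0)}$, giving (b)$\Leftrightarrow$(c); with the previous paragraph, all three are equivalent.

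The long-exact-sequence bookkeeping is routine, being governed entirely by Lemma~\ref{l:div}(e) and the complement vanishing. The step I expect to require the most care --- and which is what actually links colour $m$ to colour $0$ --- is the identification of the exchange approximations $g_i^{(0)}$ and $f_j^{(0)}$ with honest minimal almost split maps in $\add T$: this uses the no-loops property (hence $\End(T_i)\cong K$) to ensure the almost split map at $T_i$ does not involve $T_i$ itself, together with uniqueness of minimal approximations. I would verify these functorial-finiteness and minimality points explicitly before reading off the multiplicities.
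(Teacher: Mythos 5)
Your proof is correct and follows essentially the same route as the paper: the paper also applies $\Hom(T_i^{(c)},-)$ to the exchange triangle $T_j^{(0)}\to B_j^{(0)}\to T_j^{(1)}\to$ and reads off that the term $\Hom(T_i^{(c)},B_j^{(0)}[1])\cong\Hom(T_i^{(c)},T_j^{(1)}[1])$ is non-zero exactly when $c=1$ and $T_i\mid B_j^{(0)}$, and it obtains (b)$\Leftrightarrow$(c) from the identity $r_{ji}^{(0)}=r_{ij}^{(m)}=\dim\Irr_{\add T}(T_j,T_i)$, which you merely justify in more detail via the identification of the exchange approximations with minimal almost split maps in $\add T$.
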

\begin{proof}
Note that $r_{ji}^{(0)} = r_{ij}^{(m)} = \dim \Irr_{\add T}(T_j,T_i)$, so (b)
and (c) are equivalent.
Consider the exact sequence
\begin{multline*} \Hom(T_i^{(c)}, T_j^{(0)}[1]) \to \Hom(T_i^{(c)}, B_j^{(0)}[1]) \to \\
 \Hom(T_i^{(c)}, T_j^{(1)}[1])
\to \Hom(T_i^{(c)}, T_j^{(0)}[2]) \to 
\end{multline*}
coming from applying $\Hom(T_i^{(c)}, \ )$ to the exchange triangle $$T_j^{(0)} \to B_j^{(0)} \to T_j^{(1)}.$$
The first and fourth terms are always zero. Using \ref{l:div}(e) we get that the second term (and hence the third) is non-zero 
if and only if 
$c=1$ and $T_i$ is a direct summand in $B_j^{(0)}$. 
\end{proof}

\begin{lemma}(\cite{iy,zz})\label{l:composing}
For $0 \leq l \leq m$, 
the composition $$\gamma_k^{(v,l)} = h_k^{(v)} \circ h_k^{(v-1)}[1] 
\circ h_k^{(v-2)}[2] \circ \cdots \circ  h_k^{(v-l+1)}[l-1]
\colon T_k^{(v)} \to T_k^{(v-l)}[l]$$ is 
non-zero  
and a basis for  
$\Hom(T_k^{(v)}, T_k^{(v-l)}[l])$.
\end{lemma}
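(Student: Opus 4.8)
The plan is to prove the statement by induction on $l$, after first reducing the claim to a non-vanishing assertion. When $l\in\{1,\dots,m\}$ we have $(v-l)-v+l\equiv 0\ (\mod m+1)$, so Lemma \ref{l:div}(e) shows that $\Hom(T_k^{(v)},T_k^{(v-l)}[l])$ is one-dimensional; for $l=0$ the space is $\End(T_k^{(v)})\simeq K$ by \ref{l:div}(a). Hence in every case the target $\Hom$-space is one-dimensional, and since any non-zero element of a one-dimensional space is a basis, it suffices to prove that $\gamma_k^{(v,l)}$ is non-zero.

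The base cases are immediate. For $l=0$ the composition is empty, namely $\id_{T_k^{(v)}}$, which is non-zero. For $l=1$ we have $\gamma_k^{(v,1)}=h_k^{(v)}$, which is non-zero because the exchange triangle $T_k^{(v-1)}\to B_k^{(v-1)}\to T_k^{(v)}\xrightarrow{h_k^{(v)}}T_k^{(v-1)}[1]$ does not split, the map $f_k^{(v-1)}$ not being split mono; this is exactly the observation used in the proof of \ref{l:div}(b).

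For the inductive step, fix $v$ and assume the claim for $l-1$, with $l\geq 2$. I would use the factorization $\gamma_k^{(v,l)}=h_k^{(v-l+1)}[l-1]\circ\gamma_k^{(v,l-1)}$, obtained by peeling off the last map at the codomain end of the defining chain $T_k^{(v)}\to T_k^{(v-1)}[1]\to\cdots\to T_k^{(v-l+1)}[l-1]\to T_k^{(v-l)}[l]$. Applying $\Hom(T_k^{(v)},-)$ to the shifted exchange triangle $T_k^{(v-l)}[l-1]\to B_k^{(v-l)}[l-1]\to T_k^{(v-l+1)}[l-1]\xrightarrow{h_k^{(v-l+1)}[l-1]}T_k^{(v-l)}[l]$ yields a long exact sequence in which the map induced by $h_k^{(v-l+1)}[l-1]$ sends the inductive basis element $\gamma_k^{(v,l-1)}\in\Hom(T_k^{(v)},T_k^{(v-l+1)}[l-1])$ precisely to $\gamma_k^{(v,l)}$. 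To conclude that this is non-zero it is enough that this induced map be injective, i.e. that the preceding term $\Hom(T_k^{(v)},B_k^{(v-l)}[l-1])$ map to zero.

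The crux is therefore the vanishing $\Hom(T_k^{(v)},B_k^{(v-l)}[l-1])=0$. Here $B_k^{(v-l)}$ lies in $\add\bar{T}$, and $\bar{T}\amalg T_k^{(v)}$ is a tilting object, so by the defining orthogonality of tilting objects $\Hom(T_k^{(v)},\bar{T}[t])=0$ for all $t\in\{1,\dots,m\}$; since $l-1\in\{1,\dots,m-1\}$ when $l\geq 2$, the desired vanishing follows. This is also exactly where the base case $l=1$ must be separated off: there the shift $l-1=0$ falls outside the orthogonality range, and indeed $\Hom(T_k^{(v)},\bar{T})$ need not vanish. The main obstacle is recognizing that the correct vanishing input is the tilting orthogonality of $\bar{T}\amalg T_k^{(v)}$ rather than of the original $T$, together with checking that the shift $l-1$ remains inside $\{1,\dots,m\}$ throughout the induction.
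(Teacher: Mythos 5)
Your proof is correct, and it differs from the paper's in a substantive way: the paper disposes of the non-vanishing of $\gamma_k^{(v,l)}$ by simply citing \cite{iy} (and \cite{bmrrt} for $m=1$), and then deduces the basis statement from the one-dimensionality in Lemma \ref{l:div}(e), exactly as you do for that part. You instead give a self-contained induction on $l$ for the non-vanishing, and the key vanishing $\Hom(T_k^{(v)},B_k^{(v-l)}[l-1])=0$ that makes the connecting map injective is correctly sourced from the rigidity of the tilting object $\bar{T}\amalg T_k^{(v)}$ (not of $T$ itself), together with the check that the shift $l-1$ stays in $\{1,\dots,m\}$; your base case $l=1$ via non-splitness of the exchange triangle is also the right argument and matches the observation used in the paper's proof of Lemma \ref{l:div}(b). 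What your route buys is independence from the external reference for the first claim. One small caveat: Lemma \ref{l:div} is stated under the hypothesis $m>1$, so for $m=1$ your reduction of ``basis'' to ``non-zero'' needs the one-dimensionality of $\Hom(T_k^{(v)},T_k^{(v-1)}[1])$ from \cite{bmrrt} rather than from \ref{l:div}(e); the paper separates off $m=1$ by citation for exactly this reason, and your argument should note the same.
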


\begin{proof}
For $m=1$, see \cite{bmrrt}.
Assume $m \geq 2$. For the first claim see \cite{iy}, while 
the second claim then follows from Lemma \ref{l:div}(e).
\end{proof}

We include an independent proof of the following crucial property.

\begin{proposition}\cite{zz} \label{l:disjoint}
$B_k^{(u)}$ and $B_k^{(v)}$ has no common non-zero direct summands whenever $u \neq v$.
\end{proposition}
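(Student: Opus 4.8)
The plan is to argue by contradiction: suppose some indecomposable $T_j \in \add \bar T$ (so $j \neq k$) is a common direct summand of $B_k^{(u)}$ and $B_k^{(v)}$ with $u < v$, and derive a contradiction. I would reduce, using degrees, to the case where $u$ and $v$ are consecutive colours whose complements share a common degree, and then rule that case out via the absence of oriented cycles in the endomorphism ring of a partial tilting module.

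First I would record what a common summand yields. Since the maps $f_k^{(c)}$ and $g_k^{(c)}$ have no proper zero summands, $T_j$ being a summand of $B_k^{(c)}$ produces a nonzero component $T_k^{(c)} \to T_j$ of $f_k^{(c)}$ and a nonzero component $T_j \to T_k^{(c+1)}$ of $g_k^{(c+1)}$. Because $H$ is hereditary and $m \geq 2$, a nonzero map in $\C$ between indecomposables can only raise the degree by $0$ or $1$ modulo $m+1$; hence $\delta(T_j) \in \{\delta(T_k^{(c)}), \delta(T_k^{(c)})+1\}$ and $\delta(T_k^{(c+1)}) \in \{\delta(T_j), \delta(T_j)+1\}$. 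Combining these constraints with the admissible degree jumps of Lemma~\ref{l:div}(b), I would pin down $\delta(T_j)$ in terms of $\delta(T_k^{(c)})$ and $\delta(T_k^{(c+1)})$ in each case (degree unchanged, degree raised by $1$, or the skipped degree $m$ when the jump is $2$).

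The degree bookkeeping is where I expect the main obstacle to lie. Running over the full cycle $c = 0, \dots, m$, the degrees $\delta(T_k^{(c)})$ sweep the residues modulo $m+1$ monotonically, according to the two distributions of Lemma~\ref{l:div}(c). The set of degrees compatible with $T_j \in B_k^{(c)}$ is then an interval attached to colour $c$, and consecutive colours share at most one boundary degree. I would use this, together with the facts that in the distribution with two complements of one degree there is no complement of degree $m$ and at most two complements share a degree, to conclude that $T_j \in B_k^{(u)} \cap B_k^{(v)}$ with $u \neq v$ forces $v = u+1$ and $\delta(T_j) = \delta(T_k^{(u+1)})$; the jump-$2$ configurations get excluded precisely because they would require a complement of degree $m$. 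This step is delicate because of the wrap-around modulo $m+1$ and the stall/skip appearing in the second distribution, so most of the care is needed here.

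Finally I would dispatch the surviving consecutive case. With $v = u+1$ and $\delta(T_j) = \delta(T_k^{(u+1)})$, the nonzero component $T_j \to T_k^{(u+1)}$ of $g_k^{(u+1)}$ and the nonzero component $T_k^{(u+1)} \to T_j$ of $f_k^{(u+1)}$ are maps between objects of the same degree $d := \delta(T_k^{(u+1)})$; since $m \geq 2$ and $H$ is hereditary, such maps are genuine $H$-module homomorphisms between $\Delta_d(T_j)$ and $\Delta_d(T_k^{(u+1)})$. These are two non-isomorphic indecomposable summands of the partial tilting module $\Delta_d(T^{(u+1)})$, where $T^{(u+1)} = \bar T \amalg T_k^{(u+1)}$, by Lemma~\ref{l:partial}(a). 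Nonzero homomorphisms in both directions between distinct indecomposable summands produce an oriented cycle in the endomorphism ring, contradicting Lemma~\ref{l:partial}(b). This contradiction completes the argument.
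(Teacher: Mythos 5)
Your overall mechanism is the same as the paper's: reduce by degree considerations to a situation where the relevant components of $f_k$ and $g_k$ are honest $H$-module maps between summands of one partial tilting module, then invoke the absence of oriented cycles from Lemma~\ref{l:partial}(b). The final step, as you state it, is fine. The gap is in the degree bookkeeping, and it is not merely the "delicate" wrap-around you flag: the premise it rests on is false. A nonzero map in $\C$ between indecomposables can raise the degree by $2$ modulo $m+1$, not only by $0$ or $1$. Indeed, for $X$ of degree $d$ and $Y$ of degree $e$ one has $\Hom_{\C}(X,Y)\supseteq\Hom_{\D}(X,GY)=\Ext_H^{\,e+m-d}(X[-d],\tau^{-1}Y[-e])$, which can be nonzero when $e+m-d=1$, i.e.\ for $(d,e)=(m-1,0)$ or $(m,1)$; both are raises by $2$ modulo $m+1$. (This is exactly why Lemma~\ref{l:div}(d) must allow $c'=c+2$.) So your constraint $\delta(T_j)\in\{\delta(T_k^{(c)}),\delta(T_k^{(c)})+1\}$ fails when $\delta(T_k^{(c)})\in\{m-1,m\}$, and the conclusion you extract --- that a common summand forces $v=u+1$ \emph{and} $\delta(T_j)=\delta(T_k^{(u+1)})$ --- does not follow.

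A concrete configuration that escapes your argument, and which the paper treats by a separate construction, is: $m=2$, $u=0$, $v=1$, $\delta(T_k^{(0)})=0$, $\delta(T_k^{(1)})=1$, $\delta(T_k^{(2)})=0$, and the common summand of degree $0$. Here the component $T_k^{(1)}\to T_j$ of $f_k^{(1)}$ drops from degree $m-1=1$ to degree $0$ (a raise by $2$ mod $3$), so it is not an $H$-module map and your two-maps-give-a-cycle step does not apply; the paper instead composes $T_j\to T_k^{(2)}$ with the monomorphism $T_k^{(2)}\to B_k^{(2)}$ (the triangle $T_k^{(2)}\to B_k^{(2)}\to T_k^{(0)}\to$ being induced from an exact sequence in $\mod H$) to produce a cycle in the endomorphism ring of the \emph{different} partial tilting module $T_j\amalg B_k^{(2)}\amalg T_k^{(0)}$. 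Likewise, the case $\lvert u-v\rvert=2$ is not disposed of by your interval argument --- your remark that it "would require a complement of degree $m$" does not apply, since $T_j$ is a summand of $\bar T$ and not a complement --- and it needs (and gets, in the paper) its own degree argument. To close the proof you would need to correct the degree constraints to allow the raise-by-$2$ maps and then add at least one further case with a different cycle construction.
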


\begin{proof}
When $m=1$, this is proved in \cite{bmr2}. Assume $m>1$.
We consider two cases, $\left|u-v \right| = 1$ or  $\left|u-v \right| > 1$.

Consider first the case $\left| u-v \right|=1$. Without loss of generality we can assume $u= 0$ and $v=1$,
and that $\delta(T_k^{(0)}) = 0$. 
Assume that there exists a (non-zero) indecomposable $T_x$, which is a direct summand in $B_k^{(0)}$ and in $B_k^{(1)}$. 
We have that $\delta(T_k^{(1)}) \in \{0,1 \}$ by Lemma \ref{l:div}(b). 
Assume first $\delta(T_k^{(1)}) =0 $. Then the exchange triangle 
$$T_k^{(0)} \to B_k^{(0)} \to T_k^{(1)} \to$$
is induced from the degree 0 part of the derived category, and hence from an exact sequence in $\mod H$. 
Then the endomorphism ring of the partial tilting module
$T_x \amalg T_k^{(1)}$ has a cycle, which is a contradiction to Lemma \ref{l:partial}.
Assume now that $\delta(T_k^{(1)}) =1$. Then $\delta({T_k^{(2)}}) \in \{0,1,2 \}$, where 0 can only occur if $m=2$.
If $\delta({T_k^{(2)}}) \in \{1,2 \}$, then 
clearly $\delta(T_x)=1$, and hence the partial tilting module $T_k^{(1)} \amalg T_x$ contains a cycle,
which is a contradiction. Assume that $\delta(T_k^{(2)}) = 0$ (and hence $m=2$). Then $\delta(T_x) \in \{0, 1 \}$.
If $\delta(T_x)=1$, we get a contradiction as in the previous case. If  $\delta(T_x)=0$, consider
the exchange triangle
$$T_k^{(2)} \to B_k^{(2)} \to T_k^{(0)} \to$$
which is induced from an exact sequence in $\mod H$. Hence there is a {\em non-zero} map $T_x \to B_k^{(2)}$ obtained by
composing $T_x \to T_k^{(2)}$ with the monomorphism $T_k^{(2)} \to B_k^{(2)}$, and thus there 
are cycles in the endomorphism ring of the partial tilting module $T_x \amalg B_k^{(2)} \amalg T_k^{(0)}$, a contradiction.
This finishes the case with $\left|u-v \right|=1$.

Assume now that $\left|u-v \right|>1$. Then we have $m > 2$.
Since  $\Hom(T_k^{(v)}, T_x) \neq 0$ and $\Hom(T_k^{(u)}, T_x) \neq 0$,
we have by Lemma \ref{l:div}(c) that $\left| v-u \right| \leq 2$. So without loss of
generality we can assume $v = u -2$. Assume that $\delta(T_k^{(u)}) = 0$.
Then $\delta(T_k^{(v)}) = m-1$ using Lemma \ref{l:div}(c) and the fact that $\Hom(T_k^{(v)}, T_x) \neq 0$.
Then also $\delta(T_k^{(v)}) \leq  m$. But $\Hom(T_x, T_k^{(v+1)}) \neq 0$, so
$\delta(T_x) \leq m$, contradicting the fact that $\Hom(T_k^{(u)}, T_x) \neq 0$.
\end{proof}

\begin{cor} $Q_T$ satisfies condition (II).  \end{cor}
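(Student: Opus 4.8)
The plan is to prove that $Q_T$ satisfies condition (II), namely that for fixed vertices $i,j$ the summand $T_j$ can appear in at most one of the objects $B_i^{(c)}$, $c = 0, \dots, m$. Recall that $q_{ij}^{(c)} = r_{ij}^{(c)}$ is the multiplicity of $T_j$ in $B_i^{(c)}$, so monochromaticity is precisely the statement that $T_j$ is a direct summand of $B_i^{(c)}$ for at most one value of $c$. **First I would** reformulate the claim as: if $T_j$ is a common direct summand of $B_i^{(u)}$ and $B_i^{(v)}$, then $u = v$. This is now exactly the content of Proposition \ref{l:disjoint}, which asserts that $B_k^{(u)}$ and $B_k^{(v)}$ have no common nonzero direct summand whenever $u \neq v$.

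**Then the argument is essentially immediate:** apply Proposition \ref{l:disjoint} with $k = i$. Suppose for contradiction that $q_{ij}^{(c)} \neq 0$ and $q_{ij}^{(c')} \neq 0$ for some $c \neq c'$. By definition of the coloured quiver, this means $T_j$ occurs with positive multiplicity in both $B_i^{(c)}$ and $B_i^{(c')}$, so $T_j$ is a common nonzero direct summand of the two objects $B_i^{(c)}$ and $B_i^{(c')}$ with $c \neq c'$. This directly contradicts Proposition \ref{l:disjoint}. Hence at most one colour $c$ can have $q_{ij}^{(c)} \neq 0$, which is precisely condition (II).

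**The only subtlety to address** is the case $m = 1$, since Proposition \ref{l:disjoint} invokes \cite{bmr2} for $m=1$ and the main body of its proof assumes $m > 1$; but \cite{bmr2} covers $m=1$ explicitly, so no gap arises. There is genuinely no further obstacle here: all the real work has been done in establishing Proposition \ref{l:disjoint}, whose proof handles the delicate degree bookkeeping (the cases $|u-v| = 1$ and $|u-v| > 1$, and the interplay of degrees with the exchange triangles and Lemma \ref{l:partial}). The corollary is simply a translation of that disjointness statement into the language of coloured arrow multiplicities, so I would expect the proof to be a single short paragraph invoking Proposition \ref{l:disjoint}.
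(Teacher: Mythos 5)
Your proof is correct and is exactly the paper's intended argument: the corollary is stated immediately after Proposition \ref{l:disjoint} with no further proof, precisely because condition (II) is a direct restatement of that proposition in terms of the multiplicities $q_{ij}^{(c)} = r_{ij}^{(c)}$ of $T_j$ in $B_i^{(c)}$.
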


\section{Symmetry}\label{s:symmetry}

Let $T=\widetilde T \amalg T_i \amalg T_j$ be a tilting object.  
In this section we show that the coloured quiver $Q_T$ 
satisfies condition 
(III). 

\begin{proposition}\label{p:symmetry}
With the notation of the previous section, we have $r_{ji}^{(c)} = r_{ij}^{(m-c)}$.
\end{proposition}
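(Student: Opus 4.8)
The plan is to reduce the combinatorial quantity $r_{ij}^{(c)}$ — the multiplicity of $T_j$ in $B_i^{(c)}$ — to the dimension of a single $\Hom$-space, and then to exploit the $m+1$-Calabi--Yau Serre duality to interchange the roles of $i$ and $j$ while converting the colour $c$ into $m-c$. Write $T = \widetilde T \amalg T_i \amalg T_j$, and recall that the case $c=0$ (equivalently $c=m$) is already contained in Lemma \ref{l:div2}, where $r_{ji}^{(0)} = r_{ij}^{(m)} = \dim \Irr_{\add T}(T_j,T_i)$; the task is to promote that mechanism to arbitrary colours.

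The first concrete step is a \emph{probe reduction}. I would use a complement of the \emph{other} almost complete tilting object $T/T_j = \widetilde T \amalg T_i$ to detect the multiplicity of $T_j$ inside $B_i^{(c)} \in \add(T/T_i)$. Since $B_i^{(c)}$ is a sum of copies of $T_j$ together with summands of $\widetilde T$, and since $T_j^{(1)} \amalg (T/T_j)$ is a tilting object, rigidity gives $\Hom(T_j^{(1)}, \widetilde T_\ell[1]) = 0$ for every summand $\widetilde T_\ell$ of $\widetilde T$, while Lemma \ref{l:div}(e) gives $\Hom(T_j^{(1)}, T_j[1]) = \Hom(T_j^{(1)}, T_j^{(0)}[1]) \simeq K$. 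Hence $r_{ij}^{(c)} = \dim \Hom(T_j^{(1)}, B_i^{(c)}[1])$. (For $m=1$ the statement is the classical one of \cite{bmr2}, so I may assume $m\geq 2$ and invoke Lemma \ref{l:div}.)

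Next I would apply $\Hom(T_j^{(1)}, -)$ to the exchange triangle $T_i^{(c)} \to B_i^{(c)} \to T_i^{(c+1)} \to T_i^{(c)}[1]$ to rewrite $\dim\Hom(T_j^{(1)}, B_i^{(c)}[1])$ in terms of the ``mixed'' spaces $\Hom(T_j^{(1)}, T_i^{(c)}[1])$ and $\Hom(T_j^{(1)}, T_i^{(c+1)}[1])$, checking that the neighbouring degree-$2$ terms $\Hom(T_j^{(1)}, T_i^{(\bullet)}[2])$ vanish by a degree count based on Lemma \ref{l:div}(b)--(e). I would then invoke the Serre duality $\Hom(A,B[t]) \simeq D\Hom(B, A[m+1-t])$ coming from $\nu \simeq [m+1]$; applied with $t=1$ this turns $\Hom(T_j^{(1)}, T_i^{(\bullet)}[1])$ into $D\Hom(T_i^{(\bullet)}, T_j^{(1)}[m])$, interchanging $i$ and $j$ and, after reindexing the complement via the exchange triangles at $j$ (running the analogous probe $r_{ji}^{(d)} = \dim\Hom(T_i^{(1)}, B_j^{(d)}[1])$ backwards), matching the colour $c$ against $m-c$. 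Combined with the base case this yields $r_{ij}^{(c)} = r_{ji}^{(m-c)}$.

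The main obstacle I anticipate is the control of precisely these mixed $\Hom$-spaces $\Hom(T_i^{(a)}, T_j^{(b)}[t])$ between complements of the two \emph{different} almost complete tilting objects $T/T_i$ and $T/T_j$: Lemma \ref{l:div}(e) only governs complements of a single almost complete tilting object, and Lemma \ref{l:div2} handles only the single configuration $(a,b,t)=(c,1,1)$. The technical heart of the argument is therefore to extend the ``furthermore'' clause of Lemma \ref{l:div2}, identifying exactly which of these mixed spaces are nonzero and of what dimension, so that the long exact sequences collapse cleanly. I expect this to require using the two families of exchange triangles (at $i$ and at $j$) simultaneously, together with the degree bounds of Lemma \ref{l:div} and the disjointness of Proposition \ref{l:disjoint}, quite possibly organised as an induction on the colour $c$.
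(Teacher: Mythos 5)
Your opening reduction is correct and worth keeping: since $B_i^{(c)}=T_j^{r_{ij}^{(c)}}\amalg \widetilde T'$ with $\widetilde T'\in\add\widetilde T$, rigidity of $(T/T_j)\amalg T_j^{(1)}$ together with Lemma \ref{l:div}(e) does give $r_{ij}^{(c)}=\dim\Hom(T_j^{(1)},B_i^{(c)}[1])$. But from that point on the proposal is a plan rather than a proof, and the part you defer is exactly where all the difficulty sits. Two concrete problems. First, the long exact sequence obtained by applying $\Hom(T_j^{(1)},-)$ to $T_i^{(c)}\to B_i^{(c)}\to T_i^{(c+1)}\to$ does not determine $\dim\Hom(T_j^{(1)},B_i^{(c)}[1])$ from the dimensions of the neighbouring terms alone: you need the ranks of the connecting maps (composition with $h_i^{(c+1)}$ and its shifts), and nothing in the paper's toolkit, nor in your sketch, controls these. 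Second, the ``mixed'' spaces $\Hom(T_j^{(1)},T_i^{(a)}[t])$ involve complements of two \emph{different} almost complete tilting objects; Lemma \ref{l:div}(e) is strictly about one family of complements, and Lemma \ref{l:div2} covers only one configuration. You acknowledge that extending these computations is ``the technical heart'' and that you ``expect'' it to follow from an induction on $c$ --- but that heart is missing, and the subsequent Serre-duality step ($\Hom(T_j^{(1)},T_i^{(a)}[1])\simeq D\Hom(T_i^{(a)},T_j^{(1)}[m])$, then ``reindexing'' from $T_j^{(1)}$ to $T_j^{(m-c)}$ along the exchange triangles at $j$) piles a second uncontrolled mixed-Hom computation on top of the first.

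For comparison, the paper avoids computing any mixed Hom-space exactly. It reduces to showing the inequality $r_{ji}^{(c)}\le r_{ij}^{(m-c)}$ (equality then follows by interchanging $i$ and $j$ and replacing $c$ by $m-c$), and proves the inequality by an explicit construction: each irreducible map $\alpha\colon T_j^{(c)}\to T_i$ is composed with $\gamma_i^{(0,c)}$ (Lemma \ref{l:composing}) to get a non-zero map $T_j^{(c)}[-c]\to T_i^{(m-c+1)}$ (Lemma \ref{l:non-van}), which is then pushed down the exchange triangles at $j$ to a map $T_j\to T_i^{(m-c+1)}$ that can be corrected to an irreducible one (Lemma \ref{l:irred}); linear independence of the resulting family gives the multiplicity bound. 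If you want to salvage your approach, you would either need to prove the mixed-Hom vanishing and rank statements you postulate --- a substantial new piece of work --- or switch to an argument, like the paper's, that only needs an inequality in each direction.
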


\begin{proof}

By Lemma \ref{l:div2}
we only need to consider the case $c \not \in \{0,m \}$. 
It is enough to show that $r_{ji}^{(c)} \leq r_{ij}^{(m-c)}$.

We first prove
\begin{lemma}\label{l:non-van}
Let $\alpha \colon T_j^{(c)} \to T_i$ be irreducible in 
$\add ((T/T_j) \amalg T_j^{(c)})$.
Then the composition $\alpha[-c] \circ \gamma_i^{(0,c)}[-c]  \colon T_j^{(c)}[-c] \to T_i^{(m-c+1)}$
is non-zero.
\end{lemma}

\begin{proof}
We have already assumed $c \neq 0$.
Assume $$\alpha[-c] \circ h_i^{(0)}[-c] \colon T_j^{(c)}[-c] \to T_i^{(0)}[-c] \to T_i^{(m)}[-c+1]$$ is zero. 
This means that $T_j^{(c)} \to T_i$ must factor through $B_i^{(m)} \overset{g_i^{(0)}}{\to} T_i$. 
Since $T_i$ is by assumption a summand in $B_j^{(c)}$, we have
that $T_i$ is not a summand in $B_j^{(0)}$ by Proposition \ref{l:disjoint}. 
Since $r_{ij}^{(m)} = r_{ji}^{(0)} = 0$, 
we have that $T_j$ is not a direct summand in $B_i^{(m)}$. 
This means that $\alpha$ is not irreducible in $\add ((T/T_j) \amalg T_j^{(c)})$, a contradiction. 
So $\alpha[-c] \circ h_i^{(0)}[-c] \colon T_j^{(c)}[-c] \to T_i^{(m)}[-c+1]$ is non-zero.

Assume $c>1$.
If the composition $\alpha[-c] \circ h_i^{(0)}[-c] \circ h_i^{(m)}[-c+1]$ is zero, then 
$\alpha[-c] \circ h_i^{(0)}[-c]$ factors through $$B_i^{(m-1)}[-c+1] \to T_i^{(m)}[-c+1].$$
We claim that $\Hom(T_j^{(c)}[-c], B_i^{(m-1)}[-c+1]) \simeq \Hom(T_j^{(c)}, B_i^{(m-1)}[1]) = 0$.
This clearly holds if $T_j$ is not a summand of $B_i^{(m-1)}$. In addition we have that
$\Hom(T_j^{(c)}, T_j[1]) = 0$ since $c>1$, using Lemma \ref{l:div}(e).
This is a contradiction, and
this argument can clearly be iterated to see that 
$\alpha[-c] \circ \gamma_i^{(0,c)}[-c]  \colon T_j^{(c)}[-c] \to T_i^{(m-c+1)}$
is non-zero, using Lemma \ref{l:div}(e).
\end{proof}

We now show that any irreducible map $\alpha \colon T_j^{(c)} \to T_i$ gives rise
to an irreducible map $\delta \colon T_i^{(m-c)} \to T_j$.

Consider the composition $$B_j^{(c-1)}[-c] \overset{g_j^{(c)}[-c]}{\longrightarrow} T_j^{(c)}[-c] \longrightarrow T_i^{(m-c+1)}.$$ 
Since $T_i$ is a summand in $B_j^{(c)}$ by assumption, it is not a 
summand in $B_j^{(c-1)}$.  Thus, $B_j^{(c-1)}$ is in 
$\add \widetilde T$. Since $\Hom(X, T_i^{(m-c+1)} [c])= 0$ for any $X$ in $\add \tilde{T}$, the composition vanishes.  

Using the exchange triangle 
$$B_j^{(c-1)}[-c] \overset{g_j^{(c)}[-c]}{\longrightarrow} T_j^{(c)}[-c] \overset{h_j^{(c)}[-c]}{\longrightarrow} T_j^{(c-1)}[-c+1],$$
we see that
$\alpha[-c] \circ \gamma_i^{(0,c)}[-c] \colon T_j^{(c)}[-c] \to T_i^{(m-c+1)}$
factors through the map $ T_j^{(c)}[-c] \overset{h_j^{(c)}[-c]}{\longrightarrow} T_j^{(c-1)}[-c+1]$, 
i.e. there is a commutative diagram

$$
\def \labelstyle{\scriptstyle}
\xymatrix@C=1.8cm{
B_j^{(c-1)}[-c] \ar^{g_j^{(c)}[-c]}[r]  & T_j^{(c)}[-c] \ar^{h_j^{(c)}[-c]}[r] \ar[d] &  T_j^{(c-1)}[-c+1] \ar[r] 
\ar^{\phi_1}[dl]&  \\
& T_i^{(m-c+1)} & &
}
$$

Similarly, using the exchange triangle  
$$B_j^{(c-2)}[-c+1] \overset{g_j^{(c-1)}[-c+1]}{\longrightarrow} T_j^{(c-1)}[-c+1] \overset{h_j^{(c-1)}[-c+1]}{\longrightarrow} T_j^{(c-2)}[-c+2]$$
we obtain a map $\phi_2 \colon T_j^{(c-2)}[-c+2] \to  T_i^{(m-c+1)}$ 

Repeating this argument $c$ times we obtain a map $\phi_c \colon T_j \to T_i^{(m-c+1)}$, such that
$\gamma_j^{(c,c)}[-c] \circ \phi_c = \alpha[-c] \circ \gamma_i^{(0,c)}$.

$$
\def \labelstyle{\scriptscriptstyle}
\def \objectstyle{\scriptscriptstyle}
\xymatrix@R=1.1cm@C=2.1cm{
T_j^{(c)}[-c] \ar_{h_j^{(c)}[-c]}[d] \ar[r] & T_i^{(m-c+1)}  \\  
T_j^{(c-1)}[-c+1] \ar_{h_j^{(c-1)}[-c+1]}[d] \ar^{\phi_1}[ur] & \\
T_j^{(c-2)}[-c+2] \ar_{h_j^{(c-2)}[-c+2]}[d] \ar^{\phi_2}[uur] & \\
\vdots \ar[d] &  \\ 
T_j   \ar^{\phi_c}[uuuur] & \\
&
}
$$

We claim that

\begin{lemma}\label{l:irred}
There is a map $\beta \colon T_j \to T_i^{(m-c+1)}$, such that 
$\gamma_j^{(c,c)}[-c] \circ \beta = \alpha[-c] \circ \gamma_i^{(0,c)}$,
and such that $\beta$ is irreducible in $\add ((T/T_i) \amalg T_i^{(m-c+1)})$.
\end{lemma}

\begin{proof}
Let $$T_j \overset{\begin{pmatrix} \psi' & \psi''  \end{pmatrix}}{\longrightarrow} (T_i^{(m-c+1)})' \amalg \widetilde{T}'$$ be a 
minimal left
$\add (T_i^{(m-c+1)} \amalg \widetilde{T})$-approximation, 
with $\widetilde{T}'$ in $\add \widetilde{T}$ and $(T_i^{(m-c+1)})'$ in $\add T_i^{(m-c+1)}$.
Let $\phi_c$ be as above, and factor it as 
$$T_j  \overset{\begin{pmatrix} \psi' & \psi'' \end{pmatrix}}{\longrightarrow} 
(T_i^{(m-c+1)})' \amalg \widetilde{T}'
\overset{\begin{pmatrix} \epsilon' \\ \epsilon'' \end{pmatrix}}{\longrightarrow} T_i^{(m-c+1)}.$$ 
Since $\gamma_j^{(c,c)}$ factors through $T_j^{(1)}[-1]$, we have that 
$\gamma_j^{(c,c)}[-c] \psi'' = 0$, 
so we have 
$$\gamma_j^{(c,c)}[-c](\psi' \epsilon' + \psi'' \epsilon'')= \gamma_j^{(c,c)}[-c] \psi' \epsilon'.$$
Hence, let we let $\beta = \psi' \epsilon'$ and since the summands in $\epsilon'$ are isomorphisms,
it is clear that $\beta$ is irreducible.
\end{proof}

Next, assume $\{ \alpha_t \}$ is a basis for the space of irreducible maps from $T_j^{(c)}$ to $T_i$.
Then, by Lemma \ref{l:non-van} the set $\{ \alpha_t \circ \gamma_i^{(0,c)} \}$ is also linearly independent. 
For each $\alpha_t$, consider the corresponding map $\beta_t$, such that 
$\gamma_j^{(c,c)}[-c] \circ \beta_t = \alpha_t[-c] \circ \gamma_i^{(0,c)}$, and
which we by Lemma \ref{l:irred} 
can assume is irreducible.
Assume a non-trivial linear combination $\sum k_t \beta_t$ is zero. Then also $\sum k_t (\gamma_j^{(c,c)}[-c] \circ \beta_t) = 
\sum k_t \alpha_t \circ \gamma_i^{(0,c)}=0$. But this contradicts Lemma \ref{l:non-van} since $\sum k_t \alpha_t$ is irreducible.
Hence it follows that 
$\{ \beta_t \}$ is also linearly independent. 
Hence, in the exchange triangle $T_i^{(m-c)} \to B_i^{(m-c)} \to T_i^{(m-c+1)}$, we have that $T_j$ appears
with multiplicity at least $r_{ji}^{(c)}$ in $B_i^{(m-c)}$.
So, we have that $r_{ji}^{(c)} \leq r_{ij}^{(m-c)}$, and the proof of the proposition
is complete.
\end{proof}

\section{Complements after mutation}

In this section we show how mutation in the vertex $j$ affects
the complements of the almost complete tilting object $T/T_i$.
As before, let $T = \widetilde{T} \amalg T_i \amalg T_j$ be an 
$m$-tilting object, and let $T' = T/T_j \amalg T_j^{(1)}$.

We need to consider
$$
\xymatrix{
T_i \ar_{(c)}@/_1pc/[rr] \ar^{(e)}[r] & T_j \ar^{(d)}[r] & T_k
}
$$ 
for all possible values of $c, d, e$.
However, we
have the following restriction on the colour of 
arrows.

\begin{proposition}\label{p:limits}
Assume $q_{ij}^{(e)}>0, q_{jk}^{(0)}>0$ and $q_{ik}^{(c)}>0$. Then $c \in \{e,e+1\}$. 
\end{proposition}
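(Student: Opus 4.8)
The plan is to translate the hypotheses $q_{ij}^{(e)}>0$, $q_{jk}^{(0)}>0$, and $q_{ik}^{(c)}>0$ into statements about exchange triangles and then use the $\Hom$-vanishing and degree constraints of Lemma~\ref{l:div}. By definition, $q_{ij}^{(e)}>0$ means $T_j$ is a summand of $B_i^{(e)}$, so there is a nonzero irreducible map contributing to the triangle $T_i^{(e)}\to B_i^{(e)}\to T_i^{(e+1)}$; similarly $q_{jk}^{(0)}>0$ means $T_k$ appears in $B_j^{(0)}$, i.e.\ in the triangle $T_j^{(0)}\to B_j^{(0)}\to T_j^{(1)}$; and $q_{ik}^{(c)}>0$ means $T_k$ appears in $B_i^{(c)}$. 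The natural strategy is to build a nonzero map from (a shift of) $T_i^{(c)}$ towards the $T_k$-tower forced by the first two hypotheses, and then read off which colours $c$ are compatible using part~(e) of Lemma~\ref{l:div}.

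\textbf{Main line of argument.}
First I would use the symmetry established in Proposition~\ref{p:symmetry} to rephrase $q_{jk}^{(0)}>0$ as $q_{kj}^{(m)}>0$, so that $T_j$ is a summand of $B_k^{(m)}$, giving an irreducible map involving $T_k^{(m)}\to B_k^{(m)}\to T_k^{(0)}$. Combined with $q_{ij}^{(e)}>0$, which supplies an irreducible map $T_i\to T_j$-type data at colour $e$, I would compose the relevant irreducible maps with the basis elements $\gamma^{(v,l)}$ of Lemma~\ref{l:composing} to produce a nonzero element of some $\Hom(T_i^{(c')},T_k^{(c'')}[t])$. The point of composing through $T_j$ is that the colour $e$ on the $i\to j$ arrow and colour $0$ on the $j\to k$ arrow pin down, via the degree-shift bookkeeping of Lemma~\ref{l:div}(b), a specific range of admissible total degrees. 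Since $q_{ik}^{(c)}>0$ independently forces $T_k$ into $B_i^{(c)}$, and by Lemma~\ref{l:div}(e) the nonvanishing of $\Hom(T_i^{(c)},T_k^{(\cdot)}[\cdot])$ occurs only for one congruence class of colours, the two forced pieces of information must be consistent, and this consistency is exactly the constraint $c\in\{e,e+1\}$.

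\textbf{The main obstacle.}
The hard part will be organizing the composition of irreducible maps and the $h^{(\cdot)}$-maps so that the resulting map is provably nonzero rather than merely a candidate; this is precisely the kind of nonvanishing argument carried out in Lemma~\ref{l:non-van}, and I expect to need an analogous factoring-and-iterating argument here, repeatedly invoking Lemma~\ref{l:div}(e) to rule out vanishing at each stage. A secondary subtlety is the modular arithmetic on degrees and colours (everything is taken mod $m+1$), together with the boundary cases where $\delta(T_i^{(c)})\in\{m-1,m\}$ in Lemma~\ref{l:div}(b) allow a degree jump of $2$; these boundary degrees are likely the reason the conclusion admits the two values $e$ and $e+1$ rather than a single value, so I would handle them by a careful case split on $\delta(T_j^{(0)})$ relative to $\delta(T_i^{(e)})$. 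Once the nonzero composite is exhibited, reading off $c\in\{e,e+1\}$ from Lemma~\ref{l:div}(d)--(e) should be routine.
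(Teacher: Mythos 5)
There is a genuine gap here, and it comes from conflating two different kinds of statements. The hypothesis $q_{ik}^{(c)}>0$ is a statement about \emph{irreducible} maps (equivalently, about the multiplicity of $T_k$ in the minimal approximation $B_i^{(c)}$), not about the nonvanishing of a Hom-space; and the conclusion $c\in\{e,e+1\}$ is likewise a constraint on where irreducible maps can live. Your plan is to manufacture a nonzero composite in some $\Hom(T_i^{(c')},T_k^{(c'')}[t])$ and then invoke Lemma \ref{l:div}(e) to say such spaces are nonzero for only one congruence class of colours. But Lemma \ref{l:div}(e) only controls $\Hom(T_i^{(c)},T_i^{(c')}[t])$, i.e.\ Hom-spaces between complements of the \emph{same} almost complete tilting object $T/T_i$; the paper provides no analogous vanishing statement for $\Hom(T_i^{(\cdot)},T_k^{(\cdot)}[\cdot])$ with $i\neq k$, and no such clean statement is available. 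Moreover, even a complete description of which of these Hom-spaces are nonzero would not pin down the colour of the arrow $i\to k$: a Hom-space can be nonzero while every map in it factors through other summands of $T$, contributing nothing to $B_i^{(c)}$. Your diagnosis that the two admissible values $e$ and $e+1$ come from the degree-jump boundary cases of Lemma \ref{l:div}(b) is also off the mark.

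The paper's proof goes the other way around: assume $c\notin\{e,e+1\}$ and show that \emph{no} map $h\colon T_j\to T_k$ can be irreducible in $\add T$, contradicting $q_{jk}^{(0)}>0$. Concretely, one extends $\left(\begin{smallmatrix} h&0\\0&0\end{smallmatrix}\right)\colon T_j\amalg X'\to T_k\amalg Z$ to a morphism between the exchange triangles $T_i^{(e)}\to T_j\amalg X'\to T_i^{(e+1)}\to$ and $T_i^{(c)}\to T_k\amalg Z\to T_i^{(c+1)}\to$ (the obstructions vanish by Lemma \ref{l:div}(e), which \emph{is} applicable there since both triangles involve complements of $T/T_i$), and deduces the factorization $h=(T_j\to T_i^{(c)}\to T_k)+(T_j\to T_i^{(e+1)}\to T_k)$. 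Under $c\notin\{e,e+1\}$, Proposition \ref{l:disjoint} forces each summand to factor through $B_i^{(c-1)}$ resp.\ $B_i^{(e+1)}$, neither of which contains $T_j$ or $T_k$, so $h$ is not irreducible. This factorization step is the key idea your proposal is missing, and without it the Hom-vanishing bookkeeping cannot reach the conclusion.
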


\begin{proof}
Consider the exchange triangle $T_i^{(e)} \to T_j \amalg X' \to T_i^{(e+1)} \to$. Note
that $T_j$ is a direct summand in the middle term $B_i^{(e)}$ by the assumption that $q_{ij}^{(e)}>0$. 
Consider also the exchange triangle $T_i^{(c)} \to T_k \amalg Z \to T_i^{(c+1)} \to$.
Pick an arbitrary non-zero map $h \colon T_j \to T_k$, and consider the map 
$\left( \begin{smallmatrix} h & 0 \\ 0 & 0 \end{smallmatrix} \right) \colon T_j \amalg X' \to T_k \amalg Z$.  
It suffices to show that whenever $c \not \in \{e,e+1 \}$, then $h$ is not irreducible in $\add T$. 
So assume that $c \not \in \{e,e+1 \}$. We claim that there is a commutative diagram 
$$
\xymatrix{
T_i^{(e)} \ar[r] \ar[d] &  T_j \amalg X' \ar[r] \ar^{\left( \begin{smallmatrix} h & 0 \\ 0 & 0 \end{smallmatrix} \right)}[d] &  
T_i^{(e+1)} \ar[r] \ar[d] & \\ 
T_i^{(c)} \ar[r] & T_k \amalg Z \ar[r] & T_i^{(c+1)} \ar[r] & 
}
$$
where the rows are the exchange triangles. 
The composition $T_i^{(e)} \to T_j \xrightarrow{h} T_k \to T_i^{(c+1)}$ is zero since 
\begin{itemize}
\item[-] if $c \neq e-1$ $\Hom(T_i^{(e)},T_i^{(c+1)}) = 0$ by using $c \not \in \{e,e+1 \}$ and Lemma \ref{l:div}(e)  
\item[-] if $c= e-1$, there is no non-zero composition $T_i^{(e)} \to T_j \to T_k \to T_i^{(c+1)} = T_i^{(e)}$ 
\end{itemize}
Hence the leftmost vertical map exists, and
then the rightmost map exists, using that $\C$ is a triangulated category.
Then, since $\Hom(T_i^{(e+1)}[-1], T_i^{(c)}) = 0$ by Lemma \ref{l:div}(e), 
there is a map $T_j \amalg X' \to T_i^{(c)}$, such that
$T_i^{(e)} \to T_i^{(c)} = T_i^{(e)} \to T_j \amalg X' \to T_i^{(c)}$. 
Hence there is map $T_i^{(e+1)} \to T_k \amalg Z$ such that
$T_j \amalg X' \to T_k \amalg Z = (T_j \amalg X' \to T_i^{(c)} \to T_k \amalg Z) 
+ (T_j \amalg X' \to T_i^{(e+1)} \to T_k \amalg Z)$.
By restriction we get 
\begin{equation}\label{factor}
h \colon T_j \to T_k = (T_j \to T_i^{(c)} \to T_k) + (T_j \to T_i^{(e+1)} \to T_k).
\end{equation}

Under the assumption $c \not \in \{e, e+1 \}$ we have that
$T_i^{(e+1)} \to T_k$ cannot be irreducible in $\add ((T/T_i) \amalg T_i^{(e+1)})$. Hence 
$T_i^{(e+1)} \to T_k = T_i^{(e+1)} \to B_i^{(e+1)} \to T_k$, where $T_k$ is not summand in $B_i^{(e+1)}$. 
Also, by Proposition
\ref{l:disjoint} we have that $T_j$ is not a summand in $B_i^{(e+1)}$. 
If $T_j \to T_i^{(c)}$ was irreducible in $\add ((T/T_i) \amalg T_i^{(c)})$, 
then there would be an irreducible map $T_i^{(c-1)} \to T_j$
in $\add ((T/T_i) \amalg T_i^{(c-1)})$,
and since $c \neq e+1$, this does not hold, by Proposition \ref{l:disjoint}.
Hence, $T_j \to T_i^{(c)} = T_j \to B_i^{(c-1)} \to T_i^{(c)}$, where $T_j$ is not a direct summand of
$B_i^{(c-1)}$. 
Also by Proposition \ref{l:disjoint} we have that $T_k$ is not a summand of $B_i^{(c-1)}$.
By (\ref{factor}), this shows that $h \colon T_j \to T_k$ is not irreducible in $\add T$.
\end{proof}

Let $T' = (T/T_j) \amalg T_j^{(1)}$. For $i \neq j$, let $(T_i^{(u)})'$ denote the complements
of $T'/T_i$, where there are exchange triangles
$$(T_i^{(u)})' \to (B_i^{(u)})' \to (T_i^{(u+1)})' \to$$

We first want to compare $(T_i^{(u)})'$ with $(T_i^{(u)})$.

\begin{lemma}\label{l:samecomp}
Assume that $q_{ij}^{(u)} = 0$ for $0 \leq u <c$ and that  $q_{ij}^{(m)} = 0$.
\begin{itemize}
\item[(a)] For $u= 0,1, \dots, c-1$,
the minimal left $\add (T/T_i)$-approximation $T_i^{(u)} \to B_i^{(u)}$
is also an $\add (T'/T_i)$-approximation.
\item[(b)] For $u= 0,1, \dots, c$, we have $(T_i^{(u)})' = T_i^{(u)}$. 
\end{itemize}
\end{lemma}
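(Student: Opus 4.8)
The plan is to prove (a) directly for each $u \in \{0,\dots,c-1\}$, and then deduce (b) by a short induction on $u$ that feeds on (a). Throughout I would write $T/T_i = \widetilde T \amalg T_j$ and $T'/T_i = \widetilde T \amalg T_j^{(1)}$, so that the two approximation contexts differ only in that the summand $T_j$ is replaced by $T_j^{(1)}$. The hypothesis $q_{ij}^{(u)} = r_{ij}^{(u)} = 0$ for $u<c$ says precisely that $T_j$ does not occur in $B_i^{(u)}$, hence $B_i^{(u)} \in \add \widetilde T \subseteq \add(T'/T_i)$ for these $u$; this is what makes it plausible that the existing map $T_i^{(u)} \to B_i^{(u)}$ can serve simultaneously as a $T'$-side approximation.

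For (a), fix $u \leq c-1$ and take $\phi\colon T_i^{(u)} \to Y$ with $Y$ an indecomposable summand of $T'/T_i$. If $Y \in \add \widetilde T \subseteq \add(T/T_i)$, then $\phi$ factors through $T_i^{(u)} \to B_i^{(u)}$ by the defining property of the $\add(T/T_i)$-approximation. The remaining case $Y = T_j^{(1)}$ is the heart of the matter. Here I would apply $\Hom(T_i^{(u)}, -)$ to the exchange triangle $T_j \to B_j^{(0)} \overset{g_j^{(1)}}{\to} T_j^{(1)} \overset{h_j^{(1)}}{\to} T_j[1]$ and note that $h_j^{(1)} \circ \phi \in \Hom(T_i^{(u)}, T_j[1])$, which vanishes: indeed $\widetilde T \amalg T_j \amalg T_i^{(u)}$ is a tilting object, being $T/T_i$ together with one of its complements, hence rigid, so $\Hom(T_i^{(u)}, T_j[t]) = 0$ for all $t \in \{1,\dots,m\}$. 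Therefore $\phi = g_j^{(1)} \circ \psi$ for some $\psi\colon T_i^{(u)} \to B_j^{(0)}$.

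It then remains to push this factorization through $B_i^{(u)}$ rather than through $B_j^{(0)}$, and this is where the second hypothesis enters. By Lemma \ref{l:div2}, the condition $q_{ij}^{(m)}=0$ is equivalent to $T_i$ not being a summand of $B_j^{(0)}$, so $B_j^{(0)} \in \add \widetilde T \subseteq \add(T/T_i)$. Consequently $\psi$ factors through the $\add(T/T_i)$-approximation, say $\psi = \rho \circ f$ with $f\colon T_i^{(u)} \to B_i^{(u)}$ and $\rho\colon B_i^{(u)} \to B_j^{(0)}$, whence $\phi = (g_j^{(1)} \circ \rho)\circ f$ factors through $f$. This shows $f$ is a left $\add(T'/T_i)$-approximation; since $f$ is left minimal, this being an intrinsic property of the map it inherits from being a minimal $\add(T/T_i)$-approximation, it is a minimal left $\add(T'/T_i)$-approximation, which is (a).

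Finally, (b) follows by induction on $u$. The case $u=0$ is clear, since $T_i = T_i^{(0)} = (T_i^{(0)})'$. Assuming $(T_i^{(u)})' = T_i^{(u)}$ for some $u \leq c-1$, part (a) identifies the minimal left $\add(T'/T_i)$-approximation of $(T_i^{(u)})'$ with the map $T_i^{(u)} \to B_i^{(u)}$, whose cone in the $T$-exchange triangle is $T_i^{(u+1)}$; as $(T_i^{(u+1)})'$ is by definition the cone of this minimal approximation, we obtain $(T_i^{(u+1)})' = T_i^{(u+1)}$, completing the induction up to $u=c$. I expect the main obstacle to be the middle step of (a), namely routing the factorization through $B_i^{(u)}$ specifically: this is exactly what forces the two separate hypotheses into play, the vanishing $\Hom(T_i^{(u)}, T_j[1])=0$ to land in $B_j^{(0)}$ and the translation of $q_{ij}^{(m)}=0$ via Lemma \ref{l:div2} to relocate $B_j^{(0)}$ inside $\add\widetilde T$. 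By contrast, the preservation of left-minimality and of the cone when passing from the $T$-context to the $T'$-context is routine once the approximation property is established.
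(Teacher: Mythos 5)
Your proof is correct, and it reaches the factorization through $B_i^{(u)}$ by a different route than the paper. The paper's argument is a one-step application of the long exact sequence obtained from the exchange triangle $T_i^{(u+1)}[-1]\to T_i^{(u)}\to B_i^{(u)}\to$ by applying $\Hom(-,T_j^{(1)})$: the obstruction to factoring a map $T_i^{(u)}\to T_j^{(1)}$ through $B_i^{(u)}$ lies in $\Hom(T_i^{(u+1)},T_j^{(1)}[1])$, which vanishes by Lemma \ref{l:div2} (for $u=0$ this invokes the equivalence with $q_{ij}^{(m)}=0$; for $u\geq 1$ the unconditional part of that lemma). You instead work on the $T_j$ side: you apply $\Hom(T_i^{(u)},-)$ to the exchange triangle $T_j\to B_j^{(0)}\to T_j^{(1)}\to$, use rigidity of the tilting object $(T/T_i)\amalg T_i^{(u)}$ to factor through $B_j^{(0)}$, and then use $q_{ij}^{(m)}=0$ (via $r_{ji}^{(0)}=r_{ij}^{(m)}$) to place $B_j^{(0)}$ in $\add(T/(T_i\amalg T_j))$ so that the $\add(T/T_i)$-approximation property finishes the job. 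In effect you have inlined the proof of the relevant case of Lemma \ref{l:div2} rather than citing it; the paper's version is more economical given that the lemma is already available, while yours makes visible exactly where each of the two hypotheses is consumed. Your handling of minimality (an intrinsic property of the map) and the induction for (b) match what the paper leaves implicit in ``(b) follows directly.''
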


\begin{proof}
By assumption $T_j$ is not a direct summand in any of the $B_i^{(u)}$.
Assume there is a map $T_i^{(u)} \to T_j^{(1)}$ and consider the diagram 

$$
\xymatrix{
T_i^{(u+1)}[-1] \ar[r] &  T_i^{(u)} \ar[r] \ar [d] & B_i^{(u)} \ar [r] & \\
 & T_j^{(1)} & &
}
$$
Since $\Hom(T_i^{(u+1)},T_j^{(1)}[1])= 0$ by Lemma \ref{l:div2}, we see that 
the map $T_i^{(u)} \to T_j^{(1)}$ factors through $T_i^{(u)} \to B_i^{(u)}$.
Hence the minimal left $\add (T/T_i)$-approximation $T_i^{(u)} \to B_i^{(u)}$
is also an $\add (T'/T_i)$-approximation, so we have proved (a). Then
(b) follows directly.
\end{proof}

\begin{lemma}\label{l:comp}
Assume that $e \neq m$ and there are exchange triangles
\begin{equation}\label{i-tri}
T_i^{(e)} \to (T_j)^p \amalg X \to T_i^{(e+1)} \to
\end{equation}
and
\begin{equation}\label{j-tri}
T_j \to (T_k)^q \amalg Y \to T_j^{(1)} \to,
\end{equation}
where $p = q_{ij}^{(e)} > 0$  and $q = q_{jk}^{(0)} \geq 0$, i.e. $B_i^{(e)} = (T_j)^p \amalg X$ and 
$B_j^{(0)} = (T_k)^q \amalg Y$, where $T_k$ is not isomorphic to any direct summand in $Y$.

\begin{itemize}
\item[(a)]
The composition $T_i^{(e)} \to (T_j)^p \amalg X \to  (T_k)^{pq} \amalg Y^p \amalg X$ is a 
left $\add (T'/T_i)$-approximation.  
\item[(b)]
There is a triangle 
$$T_i^{(e)} \to (T_k)^{pq} \amalg Y^p \amalg X \to (T_i^{(e+1)})' \amalg C' \to$$ 
with $C'$ in $\add (T/(T_i \amalg T_j))$ and $T_i^{(e)}= (T_i^{(e)})'$.
\item[(c)]
There is a triangle $T_i^{(e+1)} \to (T_i^{(e+1)})' \amalg C' \to (T_j^{(1)})^p \to$.
\end{itemize}
\end{lemma}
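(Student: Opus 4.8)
\emph{Strategy.} The plan is to prove (a) first and then read off (b) and (c) from the octahedral axiom. Two preliminary observations set everything up. First, since $q_{ij}^{(e)}=p>0$, Proposition \ref{p:symmetry} gives $r_{ji}^{(m-e)}=r_{ij}^{(e)}=p>0$, so $T_i$ is a summand of $B_j^{(m-e)}$; as $e\neq m$ we have $m-e\neq 0$, and Proposition \ref{l:disjoint} then forces $T_i\notin\add B_j^{(0)}$. Hence $B_j^{(0)}=(T_k)^q\amalg Y\in\add\widetilde T$, and since $X$ is the part of $B_i^{(e)}\in\add(T/T_i)$ complementary to the $T_j$-summands it too lies in $\add\widetilde T$; thus the target $(T_k)^{pq}\amalg Y^p\amalg X=(B_j^{(0)})^p\amalg X$ of the map in (a) lies in $\add\widetilde T\subseteq\add(T'/T_i)$. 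Second, by monochromaticity (condition (II)) the hypothesis $q_{ij}^{(e)}\neq 0$ forces $q_{ij}^{(u)}=0$ for all $u\neq e$, so Lemma \ref{l:samecomp} applies with $c=e$ and gives $(T_i^{(u)})'=T_i^{(u)}$ for $0\le u\le e$; in particular $(T_i^{(e)})'=T_i^{(e)}$.

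\emph{Proof of (a).} I would write $f\colon T_i^{(e)}\to (T_j)^p\amalg X$ for the approximation in (\ref{i-tri}) and $b=(f_j^{(0)})^{\oplus p}\amalg\id_X$ for the map obtained by applying the approximation $f_j^{(0)}\colon T_j\to B_j^{(0)}$ of (\ref{j-tri}) to each copy of $T_j$, so that the map in (a) is $bf$. Since $\add(T'/T_i)=\add(\widetilde T\amalg T_j^{(1)})$, one must factor through $bf$ every map from $T_i^{(e)}$ into $\widetilde T$ and into $T_j^{(1)}$. A map $T_i^{(e)}\to\widetilde T$ factors through $f$, being a left $\add(T/T_i)$-approximation; its $T_j$-components then factor through $f_j^{(0)}$ (a left $\add(T/T_j)$-approximation, with $\widetilde T\subseteq\add(T/T_j)$), so the whole map factors through $bf$. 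The delicate case is a map $\phi\colon T_i^{(e)}\to T_j^{(1)}$; here I would use that $T_i^{(e)}\amalg(T/T_i)$ is a tilting object, so $\Hom(T_i^{(e)},T_j[1])=0$. The composite of $\phi$ with $h_j^{(1)}\colon T_j^{(1)}\to T_j[1]$ therefore vanishes, $\phi$ lifts along $g_j^{(1)}\colon B_j^{(0)}\to T_j^{(1)}$ to some $\eta\colon T_i^{(e)}\to B_j^{(0)}$, and since $B_j^{(0)}\in\add\widetilde T$ the previous case makes $\eta$---hence $\phi=g_j^{(1)}\eta$---factor through $bf$.

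\emph{Proof of (b) and (c).} Taking $p$ copies of (\ref{j-tri}) and adding the split triangle $X\xrightarrow{\id}X\to 0\to$ yields a triangle $(T_j)^p\amalg X\xrightarrow{b}(B_j^{(0)})^p\amalg X\to(T_j^{(1)})^p\to$. I would then apply the octahedral axiom to the composable pair $f,b$, whose cones are $T_i^{(e+1)}$ and $(T_j^{(1)})^p$; this produces an object $Z$ sitting in triangles $T_i^{(e)}\xrightarrow{bf}(T_k)^{pq}\amalg Y^p\amalg X\to Z\to$ and $T_i^{(e+1)}\to Z\to(T_j^{(1)})^p\to$. To identify $Z$, I would invoke (a): as $bf$ is a left $\add(T'/T_i)$-approximation, after an isomorphism of its target it is the minimal left $\add(T'/T_i)$-approximation of $T_i^{(e)}$ together with a zero map onto a complementary summand $C'$, and since the target lies in $\add\widetilde T$ so does $C'$, giving $C'\in\add(T/(T_i\amalg T_j))$. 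Because $(T_i^{(e)})'=T_i^{(e)}$, the minimal part is the approximation in the exchange triangle $(T_i^{(e)})'\to(B_i^{(e)})'\to(T_i^{(e+1)})'$, with cone $(T_i^{(e+1)})'$; the cone of $bf$ being that of its minimal part together with $C'$, we obtain $Z=(T_i^{(e+1)})'\amalg C'$. Substituting this into the two triangles gives (b) and (c).

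\emph{Main obstacle.} The crux is part (a), and within it the factorization of maps $T_i^{(e)}\to T_j^{(1)}$: everything rests on first knowing $T_i\notin\add B_j^{(0)}$ (so that the target is an $\add\widetilde T$-object) and then on the vanishing $\Hom(T_i^{(e)},T_j[1])=0$, which allows such maps to be lifted back to $B_j^{(0)}$ and reduced to the easy case. Once (a) is in hand, (b) and (c) follow formally from the octahedral axiom and the standard splitting of a possibly non-minimal approximation into its minimal part and a split summand.
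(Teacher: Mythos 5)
Your proof is correct and follows essentially the same route as the paper: compose the two approximations, check the approximation property using the exchange triangles, and apply the octahedral axiom, identifying the cone via minimality and $(T_i^{(e)})'=T_i^{(e)}$ from Lemma~\ref{l:samecomp}. The only (harmless) difference is in (a), where the paper applies $\Hom(-,U)$ to the triangles and uses $\Hom(T_i^{(e+1)},U[1])=0=\Hom(T_j^{(1)},U[1])$ for arbitrary $U\in\add(T'/T_i)$, while you split on the target and use $\Hom(T_i^{(e)},T_j[1])=0$ to lift maps into $T_j^{(1)}$ back to $B_j^{(0)}$.
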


\begin{proof}
Consider an arbitrary map $f \colon T_i^{(e)} \to U$ with $U$ in $\add (T'/T_i)$. 
We have that $\Hom(T_i^{(e+1)}, T_j^{(1)}[1]) = 0$, by Lemma \ref{l:div2}. 
Hence, by applying $\Hom(\ , U)$ to the triangle
(\ref{i-tri}) we get that $f$ factors through $T_i^{(e)} \to (T_j)^p \amalg X$. By applying $\Hom(\ , U)$ to the triangle
(\ref{j-tri}), and using that  $\Hom(T_j^{(1)}, T_j^{(1)}[1]) = 0$, we get that $f$ factors through 
$T_i^{(e)} \to (T_j)^p \amalg X \to Y^p \amalg (T_k)^{pq} \amalg X$. This proves (a).
For (b) and (c) we use the exchange triangles (\ref{i-tri}) and (\ref{j-tri}) and the octahedral axiom 
to obtain the commutative diagram
of triangles

$$
\xymatrix{
  T_i^{(e)} \ar[r] \ar@{=}[d] & (T_j)^p \amalg X \ar[r] \ar[d]            & T_i^{(e+1)} \ar[d] \ar[r] & \\ 
  T_i^{(e)} \ar[r]        & (T_k)^{pq} \amalg Y^p \amalg X \ar[r]  \ar[d] & C \ar[r] \ar[d]         &  \\
                    & (T_j^{(1)})^p \ar@{=}[r]                    & (T_j^{(1)})^p \ar[r]    & 
}
$$
By (a) the map $T_i^{(e)} \to (T_k)^{pq} \amalg Y^p \amalg X$ is a left $\add (T'/T_i)$-approximation,
and by Lemma \ref{l:samecomp} we have that $(T_i^{(e)})' = T_i^{(e)}$.
Hence $C = (T_i^{(e+1)})' \amalg C'$, where $C'$ is in $\add ((T_k)^{pq} \amalg Y^p \amalg X) \subset \add (T/(T_i \amalg T_j))$,
and with no copies isomorphic to $T_k$ in $Y$.
\end{proof}

Note that the induced $\add (T'/T_i)$-approximation
is in general not minimal.

\begin{lemma}\label{l:modtri}
Assume $e \neq m$ and $q_{ij}^{(e)} > 0$.
\begin{itemize}
\item[(a)]  Then there is a triangle 
$$(T_i^{(e+1)})' \amalg C' \xrightarrow{\alpha} B_i^{(e+1)} \amalg (T_j^{(1)})^p \to T_i^{(e+2)} \to $$     
where $\alpha$
is a minimal left $\add (T'/T_i)$-approximation, and $C'$ is as in Lemma \ref{l:comp}.
\item[(b)] There is an induced exchange triangle  
$$(T_i^{(e+1)})' \to \frac{B_i^{(e+1)} \amalg (T_j^{(1)})^p}{\alpha(C')} \to T_i^{(e+2)} \to $$     
where $\alpha(C') \simeq C'$.
\item[(c)] $(T_i^{(e+2)})' \simeq T_i^{(e+2)}$.
\end{itemize}
\end{lemma}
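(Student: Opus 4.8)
The plan is to construct the triangle in (a) by an octahedron built on the two triangles already at hand, and then to read off minimality and part (c) from the indecomposability of its cone. I would start from the exchange triangle $T_i^{(e)} \to B_i^{(e)} \to \cdots$ shifted up to the relevant exchange triangle $T_i^{(e+1)} \overset{u}{\to} B_i^{(e+1)} \to T_i^{(e+2)} \to$, together with the triangle of Lemma~\ref{l:comp}(c), which rotated reads
$$(T_j^{(1)})^p[-1] \overset{w}{\to} T_i^{(e+1)} \to (T_i^{(e+1)})' \amalg C' \to (T_j^{(1)})^p.$$
Applying the octahedral axiom to the composite $u\circ w\colon (T_j^{(1)})^p[-1] \to B_i^{(e+1)}$, whose first factor has cone $(T_i^{(e+1)})' \amalg C'$ and whose second factor $u$ has cone $T_i^{(e+2)}$, yields a triangle
$$(T_i^{(e+1)})' \amalg C' \overset{\alpha}{\to} (\text{cone of } u\circ w) \to T_i^{(e+2)} \to,$$
with $\alpha$ the induced map between cones. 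It remains to identify the middle term with $B_i^{(e+1)} \amalg (T_j^{(1)})^p$.

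The decisive point is that $u\circ w = 0$. This composite lives in $\Hom((T_j^{(1)})^p, B_i^{(e+1)}[1])$, so it suffices to show this group vanishes. Since $p = q_{ij}^{(e)} > 0$, the summand $T_j$ occurs in $B_i^{(e)}$, and by Proposition~\ref{l:disjoint} it therefore does not occur in $B_i^{(e+1)}$; as $B_i^{(e+1)} \in \add(T/T_i)$, this places $B_i^{(e+1)}$ in $\add\widetilde T$. Because $T_j^{(1)}$ and $\widetilde T$ are both direct summands of the rigid object $T'$, we get $\Hom(T_j^{(1)}, \widetilde T[1]) = 0$, hence $u\circ w = 0$. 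The cone of a zero map is the direct sum $B_i^{(e+1)} \amalg (T_j^{(1)})^p$, which establishes the triangle of (a).

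Next I would verify that $\alpha$ is a left $\add(T'/T_i)$-approximation, equivalently that $\Hom(T_i^{(e+2)}, U[1]) = 0$ for all $U \in \add(T'/T_i)$. For $U$ a summand of $\widetilde T$ this is the rigidity of the tilting object $(T/T_i)\amalg T_i^{(e+2)}$, and for $U = T_j^{(1)}$ it is the final assertion of Lemma~\ref{l:div2}, applicable precisely because $e \neq m$ forces $e+2 \not\equiv 1 \pmod{m+1}$. With this, minimality and (c) follow simultaneously: writing $\alpha \cong \alpha_0 \amalg (0 \to M_0)$ with $\alpha_0$ left minimal and $M_0 \in \add(T'/T_i)$, the object $\alpha_0$ is then the minimal left approximation of $(T_i^{(e+1)})' \amalg C'$, whose cone is the complement $(T_i^{(e+2)})'$. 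Comparing cones gives $T_i^{(e+2)} \cong (T_i^{(e+2)})' \amalg M_0$, and since both $T_i^{(e+2)}$ and $(T_i^{(e+2)})'$ are indecomposable, $M_0 = 0$. This proves that $\alpha$ is minimal, giving (a), and that $(T_i^{(e+2)})' \simeq T_i^{(e+2)}$, giving (c).

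Part (b) is then formal. Minimality identifies $\alpha$ with the canonical minimal left approximation of $(T_i^{(e+1)})' \amalg C'$; since $C' \in \add\widetilde T \subseteq \add(T'/T_i)$ already lies in the approximating subcategory, its contribution to this approximation is an identity map, so $\alpha$ restricts to a split monomorphism on $C'$ with image $\alpha(C') \simeq C'$ a direct summand of $B_i^{(e+1)} \amalg (T_j^{(1)})^p$. Factoring out this summand leaves the exchange triangle $(T_i^{(e+1)})' \to (B_i^{(e+1)})' \to T_i^{(e+2)} \to$ of $T'/T_i$, which is (b). The main obstacle in the whole argument is the vanishing $u\circ w = 0$: everything else is manipulation of triangles, but it is exactly the collapse of the octahedral cone into an honest direct sum---rather than a nonsplit extension---that makes the middle term come out as claimed, and it is here that Proposition~\ref{l:disjoint} and the hypothesis $e \neq m$ are indispensable.
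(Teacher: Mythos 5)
Your proposal is correct and follows essentially the same route as the paper: the paper also builds an octahedron on the exchange triangle for $T_i^{(e+1)}$ and the triangle of Lemma~\ref{l:comp}(c), identifies the middle term as $B_i^{(e+1)} \amalg (T_j^{(1)})^p$ using exactly the vanishing $\Hom(T_j^{(1)}, B_i^{(e+1)}[1])=0$ from Proposition~\ref{l:disjoint} (phrased there as the splitting of the third triangle of the octahedron rather than as the vanishing of the composite $u\circ w$, which is equivalent), and then verifies the approximation property via Lemma~\ref{l:div2} and rigidity. Your explicit treatment of minimality and of part (c) via the decomposition $\alpha\cong\alpha_0\amalg(0\to M_0)$ is if anything slightly more detailed than the paper's.
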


\begin{proof}
Consider the exchange triangle $$T_i^{(e+2)}[-1] \to T_i^{(e+1)} \to B_i^{(e+1)} \to $$
and the triangle from Lemma \ref{l:comp} (b)
\begin{equation}\label{comp-app}
T_i^{(e+1)} \to (T_i^{(e+1)})' \amalg C' \to (T_j^{(1)})^p \to.
\end{equation}
Apply the octahedral axiom, to obtain the commutative diagram of triangles

$$
\xymatrix{
  T_i^{(e+2)}[-1] \ar[r] \ar@{=}[d] & T_i^{(e+1)} \ar[r] \ar[d]            & B_i^{(e+1)} \ar[d] \ar[r] & \\ 
  T_i^{(e+2)}[-1] \ar[r]            & (T_i^{(e+1)})' \amalg C' \ar[r] \ar[d]  & G \ar[r] \ar[d]         & \\
                                  & (T_j^{(1)})^p \ar@{=}[r]           & (T_j^{(1)})^p \ar[r]    & 
}
$$
Since $T_j$ does not occur as a summand in
$B_i^{(e+1)}$ by Proposition \ref{l:disjoint}, 
we have that $\Hom(T_j^{(1)}, B_i^{(e+1)}[1]) = 0$. 
Hence the rightmost triangle splits, so we have a triangle
\begin{equation}\label{octa-tri}
T_i^{(e+2)}[-1] \to (T_i^{(e+1)})' \amalg C' \to B_i^{(e+1)} \amalg (T_j^{(1)})^p \to     
\end{equation}
By Lemma \ref{l:div2} we have that $\Hom(T_i^{(e+2)}, T_j^{(1)}[1])= 0$. 
By Lemma \ref{l:div}(e) we get that $\Hom(T_i^{(e+2)}, T_i[1]) = 0$, and clearly
$\Hom(T_i^{(e+2)}, T_l [1]) = 0$, for $l \neq i$. 
We hence get that all maps $(T_i^{(e+1)})' \amalg C' \to U$, with $U$ in $\add T'$, factor through
$(T_i^{(e+1)})' \amalg C' \to B_i^{(e+1)} \amalg (T_j^{(1)})^p$. Minimality is clear from the triangle
(\ref{octa-tri}).
This proves (a), and (b) follows from the fact that $C'$ contains no copies of $T_j$, and hence splits off.
(c) is a direct consequence of (b).
\end{proof}

\begin{proposition}\label{p:summarize}

\begin{itemize}
\item[(a)] If $q_{ij}^{(u)} = 0$ for $u = 0, \dots, m$, then 
$(T_i^{(v)})' \simeq T_i^{(v)}$ for all $v$.
\item[(b)] If $e \neq m$ and $q_{ij}^{(e)} > 0$, then 
$(T_i^{(v)})' \simeq T_i^{(v)}$ for $v \neq e+1$.

\end{itemize}
\end{proposition}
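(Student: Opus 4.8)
My plan is to deduce Proposition \ref{p:summarize} entirely from the approximation results already established, disposing of (a) in essentially one line and reducing (b) to a short induction that re-runs the mechanism of Lemma \ref{l:samecomp}(a) outside its stated range. The point is that mutation at $j$ only disturbs the approximations of $T/T_i$ in which $T_j$ actually occurs, and by monochromaticity this happens in a single colour.

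For part (a), the hypothesis $q_{ij}^{(u)}=0$ for all $u\in\{0,\dots,m\}$ is exactly what is needed to invoke Lemma \ref{l:samecomp} with $c=m$: we have $q_{ij}^{(u)}=0$ for $0\leq u<m$ and $q_{ij}^{(m)}=0$. Its conclusion (b) then reads $(T_i^{(v)})'\simeq T_i^{(v)}$ for $v=0,1,\dots,m$, i.e. for all complements, so (a) is immediate. For part (b) I would first use that $Q_T$ satisfies monochromaticity (the corollary to Proposition \ref{l:disjoint}) to deduce from $q_{ij}^{(e)}>0$ that $q_{ij}^{(u)}=0$ for every $u\neq e$; in particular $q_{ij}^{(m)}=0$ since $e\neq m$, and $q_{ij}^{(u)}=0$ for $0\leq u<e$. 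This lets me apply Lemma \ref{l:samecomp} with $c=e$ to obtain $(T_i^{(v)})'\simeq T_i^{(v)}$ for $v=0,1,\dots,e$, and Lemma \ref{l:modtri}(c) to obtain $(T_i^{(e+2)})'\simeq T_i^{(e+2)}$. The latter is genuinely needed as a separate input, since $(T_i^{(e+1)})'$ does differ from $T_i^{(e+1)}$ in general, so the induction cannot be started at $v=e+1$. Together these cover all $v\neq e+1$ except the range $v=e+3,\dots,m$, which I treat by induction.

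The inductive step is where I would be most careful, as it uses the argument of Lemma \ref{l:samecomp}(a) at indices beyond its range. Suppose $(T_i^{(v)})'\simeq T_i^{(v)}$ for some $v$ with $e+2\leq v\leq m-1$; I claim the minimal left $\add(T/T_i)$-approximation $T_i^{(v)}\to B_i^{(v)}$ is also a minimal left $\add(T'/T_i)$-approximation, whence $(T_i^{(v+1)})'\simeq T_i^{(v+1)}$. Since $v\neq e$, monochromaticity gives $q_{ij}^{(v)}=0$, so $T_j$ is not a summand of $B_i^{(v)}$ and hence $B_i^{(v)}\in\add(T/(T_i\amalg T_j))\subseteq\add(T'/T_i)$. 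For the approximation property, applying $\Hom(-,U)$ to the exchange triangle shows that every map $T_i^{(v)}\to U$ with $U\in\add(T'/T_i)$ factors through $B_i^{(v)}$ once $\Hom(T_i^{(v+1)},U[1])=0$; this vanishing holds for $U$ a summand of $T/(T_i\amalg T_j)$ because $(T/T_i)\amalg T_i^{(v+1)}$ is a tilting object, and for $U=T_j^{(1)}$ by Lemma \ref{l:div2}, since $v+1\not\equiv 1\pmod{m+1}$ throughout this range. Left minimality is an intrinsic property of the map $T_i^{(v)}\to B_i^{(v)}$, so it survives the enlargement of the subcategory; as minimal left approximations are unique up to isomorphism, the cofibre is unchanged and $(T_i^{(v+1)})'\simeq T_i^{(v+1)}$.

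I expect the main obstacle to be not any single deep step but keeping the index bookkeeping modulo $m+1$ consistent, so that the three ingredients genuinely exhaust $\{0,\dots,m\}\setminus\{e+1\}$ in every boundary case: when $e=m-1$ the induction range is empty and Lemma \ref{l:samecomp} with $c=m-1$ already yields all of $\{0,\dots,m-1\}$; when $e=m-2$, Lemma \ref{l:modtri}(c) supplies the single remaining value $v=m$; and only for $e\leq m-3$ is the induction nonvacuous. The one vanishing hypothesis that could silently fail if the range were mis-stated is $v+1\not\equiv 1\pmod{m+1}$ in Lemma \ref{l:div2}, so I would verify explicitly that for $v\in\{e+2,\dots,m-1\}$ one has $v+1\in\{e+3,\dots,m\}$, each of which is at least $3$ and at most $m$, hence never congruent to $1$.
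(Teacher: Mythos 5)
Your proof is correct and follows essentially the same route as the paper: part (a) is Lemma \ref{l:samecomp} applied with $c=m$, and part (b) combines Lemma \ref{l:samecomp} (for $v\leq e$), Lemma \ref{l:modtri}(c) (for $v=e+2$), and then re-runs the approximation argument for $v\geq e+2$ using $q_{ij}^{(v)}=0$ together with the vanishing $\Hom(T_i^{(v+1)},T_j^{(1)}[1])=0$ from Lemma \ref{l:div2}. The paper states this last step in one line; your version merely makes the induction, the minimality transfer, and the boundary cases $e=m-1$, $e=m-2$ explicit.
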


\begin{proof}
(a) is a direct consequence of \ref{l:samecomp}.
For (b) note that by Lemmas \ref{l:samecomp} and \ref{l:modtri} we have  
$(T_i^{(v)})' \simeq T_i^{(v)}$ for $v= 0, \dots, e$ and $v= e+2$.
For $v \geq e+2$ consider the
exchange triangles $$T_i^{(v)} \to B_i^{(v)} \to T_i^{(v+1)} \to.$$
Since $\Hom(T_i^{(v+1)}, T_j^{(1)}[1]) = 0$ by Lemma \ref{l:div2} and $q_{ij}^{(v)} = 0$, it is clear that
the map $T_i^{(v)} \to B_i^{(v)}$ is a left $\add T'/T_i$-approximation. Hence (b) follows.
\end{proof}

\section{Proof of the main result}

This section contains the proof of the main result, Theorem \ref{t:main}. 
As before, let $T = \widetilde{T} \amalg T_i \amalg T_j$ be an 
$m$-tilting object, and let $T' = T/T_j \amalg T_j^{(1)}$.

We will compare the numbers of $(c)$-coloured arrows from $i$ to $k$, 
in the coloured quivers of $T$ and $T'$, i.e. we will compare
$q_{ik}^{(c)}$ and $\tilde{q}_{ik}^{(c)}$.

We need to consider an arbitrary $T$ whose coloured quiver locally looks like  
$$
\xymatrix{
T_i \ar_{(c)}@/_1pc/[rr] \ar^{(e)}[r] & T_j \ar^{(d)}[r] & T_k
}
$$ 
for any possible value of $c, d, e$. Our aim is to show that the formula
$$\tilde{q}_{ik}^{(u)} = 
\begin{cases}  q_{ik}^{(u+1)} & \text{  if $j =k$} \\
		 q_{ik}^{(u-1)} &\text{  if $j=i$} \\
		 \max \{0, q_{ik}^{(u)}  - \sum_{t\neq u} q_{ik}^{(t)} + (q_{ij}^{(u)}-q_{ij}^{u-1}) q_{jk}^{(0)} 
		 + q_{ij}^{(m)} (q_{jk}^{(u)} - q_{jk}^{(u+1)}) \} & \text{  if $i \neq j \neq k$} 
                 \end{cases}
$$
holds. The case where $j=k$ is directly from the definition.  The case
where $j=i$ follows by condition (II) for $Q_{T'}$.    
For the rest of the proof we assume $j \not \in \{i,k \}$.
We will divide the proof into four cases, where $p \geq 0$ denotes the number of arrows from $i$ to $j$,
and $q = q_{jk}^{(0)}$.
\begin{itemize}
\item[I.] $p=0$ 
\item[II.] $p \neq 0$, $e \neq m$ and $q= 0$
\item[III.] $p \neq 0$, $e \neq m$ and $q \neq 0$.   
\item[IV.] $p \neq 0$ and $e=m$
\end{itemize}

Note that in the three first cases, the formula reduces to
$$\tilde{q}_{ik}^{(u)} = \max \{0, q_{ik}^{(u)} - \sum_{t \neq u} q_{ik}^{(t)} + (q_{ij}^{(u)} - q_{ij}^{(u-1)}) q_{jk}^{(0)}  
		  \},$$
and in the first two cases it further reduces to
$$\tilde{q}_{ik}^{(u)} = q_{ik}^{(u)}.$$

\noindent CASE I.
We first consider the situation where there is no coloured arrow $i \to j$, i.e. $q_{ij}^{(u)} = 0$ for all $u$.
That is, we assume $Q_T$ locally looks like this
$$
\xymatrix{
T_i \ar_{(c)}@/_1pc/[rr]  & T_j \ar^{(d)}[r] & T_k
}
$$ 
with $c,d$ arbitrary.
It is a direct consequence of Proposition \ref{p:summarize}
that $q_{ik}^{(u)} = \widetilde{q}_{ik}^{(u)}$ for all $u$ which shows that the formula holds.
\\
\\ \
\noindent CASE II.
We consider the setting
where we assume $Q_T$ locally looks like this
$$
\xymatrix{
T_i \ar_{(c)}@/_1pc/[rr] \ar^{(e)}[r] & T_j \ar^{(d)}[r] & T_k
}
$$ 
with $e \neq m$ and $q=0$.

We then claim that we have the following, which shows that the formula holds.
\begin{lemma}
In the above setting
$q_{ik}^{(u)} = \widetilde{q}_{ik}^{(u)}$ for all $u$.
\end{lemma}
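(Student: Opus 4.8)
The plan is to show that in Case II the mutated coloured quiver agrees with the original on all arrows $i \to k$, which by the reduced formula means proving $\tilde q_{ik}^{(u)} = q_{ik}^{(u)}$ for every colour $u$. The governing data are $p = q_{ij}^{(e)} > 0$ with $e \neq m$, and $q = q_{jk}^{(0)} = 0$, so that $T_k$ is \emph{not} a direct summand of $B_j^{(0)}$. Since $\tilde q_{ik}^{(u)} = r^{(u)}_{ik}(T') = $ multiplicity of $T_k$ in $(B_i^{(u)})'$, the strategy is to compare the exchange triangles of $T'/T_i$ with those of $T/T_i$ and check that the multiplicity of $T_k$ in each middle term is unchanged.

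First I would invoke Proposition \ref{p:summarize}(b): because $e \neq m$ and $p>0$, we have $(T_i^{(v)})' \simeq T_i^{(v)}$ for every $v \neq e+1$, so for all $v$ outside $\{e, e+1\}$ the approximation $T_i^{(v)} \to B_i^{(v)}$ is already a left $\add(T'/T_i)$-approximation (as in Lemma \ref{l:samecomp} and the end of \ref{p:summarize}), giving $(B_i^{(v)})' = B_i^{(v)}$ and hence $\tilde q_{ik}^{(v)} = q_{ik}^{(v)}$ there immediately. The only colours where the middle term could change are $u = e$ and $u = e+1$, which come from the two triangles in Lemmas \ref{l:comp} and \ref{l:modtri} that differ from the unmutated ones precisely by replacing a copy of $T_j$ with $B_j^{(0)} = (T_k)^q \amalg Y$ and by adding/removing copies of $T_j^{(1)}$. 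This is where the hypothesis $q = 0$ does the real work.

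For $u = e$, Lemma \ref{l:comp}(a)--(b) produces the approximation $T_i^{(e)} \to (T_k)^{pq}\amalg Y^p \amalg X$; since $q = 0$ the factor $(T_k)^{pq}$ disappears, so no new copies of $T_k$ are introduced and the multiplicity of $T_k$ in $(B_i^{(e)})'$ equals its multiplicity in $B_i^{(e)}$ (the $X$ and $Y^p$ parts contain no $T_k$ by the splitting-off statement in \ref{l:comp}), giving $\tilde q_{ik}^{(e)} = q_{ik}^{(e)}$. For $u = e+1$, Lemma \ref{l:modtri}(b) gives the exchange triangle with middle term $(B_i^{(e+1)} \amalg (T_j^{(1)})^p)/\alpha(C')$; here the change relative to $B_i^{(e+1)}$ is only the addition of $(T_j^{(1)})^p$ and removal of $C'$, and since $C' \in \add((T_k)^{pq}\amalg Y^p \amalg X)$ again contributes no $T_k$ when $q=0$, and $T_j^{(1)} \not\simeq T_k$ (as $j \neq k$), the multiplicity of $T_k$ is again preserved. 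Thus $\tilde q_{ik}^{(e+1)} = q_{ik}^{(e+1)}$.

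The main obstacle I anticipate is the bookkeeping at $u=e+1$: one must be certain that passing to the minimal approximation in Lemma \ref{l:modtri}(b) (the quotient by $\alpha(C')$) removes exactly the non-$T_k$ summands and cannot accidentally cancel a genuine $T_k$ in $B_i^{(e+1)}$. This is controlled by the facts, already available to us, that $C'$ has no $T_k$-summands when $q=0$ and that $T_j^{(1)} \neq T_k$, so the minimal approximation's $T_k$-multiplicity coincides with that of $B_i^{(e+1)}$; verifying this carefully — rather than the generic colours $u \notin \{e,e+1\}$ which are routine via \ref{p:summarize} — is the crux of the lemma.
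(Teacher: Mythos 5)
Your overall strategy is the same as the paper's: dispose of the colours $u\notin\{e,e+1\}$ via Proposition \ref{p:summarize} and Lemma \ref{l:samecomp}, and track the multiplicity of $T_k$ through the modified triangles of Lemmas \ref{l:comp} and \ref{l:modtri} for $u=e$ and $u=e+1$. But there is a genuine gap in the central step: you assert that ``the $X$ and $Y^p$ parts contain no $T_k$ by the splitting-off statement in \ref{l:comp}'' and, later, that $C'\in\add(Y^p\amalg X)$ ``contributes no $T_k$ when $q=0$.'' Lemma \ref{l:comp} only guarantees that $T_k$ is not a summand of $Y$; it says nothing about $X$. In Case II the hypothesis is $q_{jk}^{(0)}=0$, so Proposition \ref{p:limits} places no restriction on the colour $c$ of the arrows $i\to k$, and $c=e$ is perfectly possible — i.e.\ $T_k$ may occur in $X=B_i^{(e)}/(T_j)^p$ with positive multiplicity. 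In that situation your argument does not rule out that some of those copies of $T_k$ end up in $C'$ and get cancelled when one passes from the non-minimal approximation $T_i^{(e)}\to Y^p\amalg X$ to the minimal one, which would make $\widetilde q_{ik}^{(e)}<q_{ik}^{(e)}$.

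The paper closes exactly this case by a separate argument: if $T_k$ is a summand of $B_i^{(e)}$, then by Proposition \ref{l:disjoint} it is not a summand of $B_i^{(e+1)}$, and since by Lemma \ref{l:modtri} the map $(T_i^{(e+1)})'\amalg C'\to B_i^{(e+1)}\amalg(T_j^{(1)})^p$ is a minimal left approximation with $\alpha(C')\simeq C'$, the object $C'$ embeds as a summand of $B_i^{(e+1)}\amalg(T_j^{(1)})^p$ and hence contains no $T_k$; only then does the $T_k$-multiplicity survive the passage to $(B_i^{(e)})'$. Your treatment of $u=e+1$ is fine precisely in the subcase $q_{ik}^{(e)}=0$ (where $X$ really has no $T_k$), and in the opposite subcase one gets $q_{ik}^{(e+1)}=\widetilde q_{ik}^{(e+1)}=0$ for free from monochromaticity — but that case split, and the disjointness argument that powers it, is the content your write-up is missing.
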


\begin{proof}
It follows directly from Proposition \ref{p:summarize} that $q_{ik}^{(u)} = \widetilde{q}_{ik}^{(u)}$
for $u= 0, \dots, e-1$.
We claim that 
$q_{ik}^{(e)} = \widetilde{q}_{ik}^{(e)}$.

By Lemma \ref{l:comp} we have the (not necessarily minimal) left $\add (T'/T_i)$-approximation

$$T_i^{(e)} \to (T_k)^{pq} \amalg Y^p \amalg X = Y^p \amalg X.$$

First, assume that $T_k$ does not appear as a summand in $B_i^{(e)} = (T_j)^p \amalg X$, then 
the same holds for $Y^p \amalg X$, and hence for $(B_i^{(e)})'$ which
is a direct summand in $Y^p \amalg X$. 

Next, assume $T_k$ appears as a summand in $B_i^{(e)}$, and hence in $X$.
Then $T_k$ is by Proposition \ref{l:disjoint} not a summand in $B_i^{(e+1)}$, and by Lemma \ref{l:modtri}
we have that $T_k$ is also not a summand in $C'$.
Therefore $T_k$ appears with the same multiplicity in $B_i^{(e)}$ as in 
$(B_i^{(e)})'$, also in this case.

We now show that 
$q_{ik}^{(u)} = \widetilde{q}_{ik}^{(u)}$ for $u>e$.

If $q_{ik}^{(e)} \neq 0$, then 
$q_{ik}^{(u)} = \widetilde{q}_{ik}^{(u)} = 0$ for $u>e$ and we are finished.
So assume $q_{ik}^{(e)} = 0$, i.e. $T_k$ does not appear as a direct summand of $X$.

Consider the map $$(T_i^{(e+1)})' \amalg C' \to B_i^{(e+1)} \amalg (T_j^{(1)})^p. $$
We have that $(B_i^{(e+1)})' \simeq \frac{B_i^{(e+1)} \amalg (T_j^{(1)})^p}{C'}$.
By assumption, $T_k$ is  not a direct summand in $(T_k)^{pq} \amalg Y^p \amalg  X = Y^p \amalg X$,
and thus not in $C'$. Hence it follows that $q_{ik}^{(e+1)} = \widetilde{q}_{ik}^{(e+1)}$.

Since, by Proposition \ref{p:summarize} we have for $u= e+2, \dots, m$, that $(T_i^{(u)})' = T_i^{(u)}$ 
and the left $\add (T/ T_i)$-approximation coincide with the left  
$\add (T'/ T_i)$-approximations of $ T_i^{(u)}$, it now follows that
$q_{ik}^{(u)} = \widetilde{q}_{ik}^{(u)}$ for all $u$. 
\end{proof}
\ \\
\noindent CASE III.
We now consider the setting with $p$ non-zero, $q \neq 0$ and $e \neq m$. That is, we assume $Q_T$ locally looks like this
$$
\xymatrix{
T_i \ar_{(c)}@/_1pc/[rr] \ar^{(e)}[r] & T_j \ar^{(0)}[r] & T_k
}
$$ 
where $c \in \{e,e+1 \}$ by Proposition \ref{p:limits}, and where there are $z = q_{ik}^{(c)} \geq 0$ arrows from
$T_i$ to $T_k$.

\begin{lemma}\label{l:formulas}
In the above setting, we have that $Q_{T'}$ is given by

\begin{equation}\label{form1}
\widetilde{q}_{ik}^{(e)} = \begin{cases} 
q_{ij}^{(e)} q_{jk}^{(0)} +  q_{ik}^{(e)}   & \text{ if $c=e$} \\
q_{ij}^{(e)} q_{jk}^{(0)} -  q_{ik}^{(e+1)} & \text{ if $c=e+1$ and $q_{ik}^{(e+1)} \leq q_{ij}^{(e)} q_{jk}^{(0)}$} \\
0                                           & \text{ otherwise}    
\end{cases}
\end{equation}
 
\

\begin{equation}\label{form2}
\widetilde{q}_{ik}^{(e+1)} = \begin{cases} 
- q_{ij}^{(e)} q_{jk}^{(0)} +  q_{ik}^{(e+1)} & \text{ if $c=e+1$ and $q_{ik}^{(e+1)} > q_{ij}^{(e)} q_{jk}^{(0)}$} \\
0                                             & \text{ otherwise}    
\end{cases}
\end{equation}

and

\begin{equation}\label{form3}
\widetilde{q}_{ik}^{(u)} = 0 \text{ for $u \not \in \{e,e+1 \}$} 
\end{equation}
\end{lemma}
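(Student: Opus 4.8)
The plan is to read off each entry $\widetilde q_{ik}^{(u)}$ as the multiplicity of $T_k$ in the minimal left $\add(T'/T_i)$-approximation $(B_i^{(u)})'$ of $(T_i^{(u)})'$, and to compare it with the corresponding datum for $T$. Throughout I write $p=q_{ij}^{(e)}$ and $q=q_{jk}^{(0)}$, so that the answers are to be expressed through $pq=q_{ij}^{(e)}q_{jk}^{(0)}$. By monochromaticity of $Q_T$ (condition (II)) together with Proposition \ref{p:limits}, the only colour carrying arrows $i\to k$ in $Q_T$ lies in $\{e,e+1\}$; calling it $c$ and setting $z=q_{ik}^{(c)}$, we have $q_{ik}^{(e)}=z,\ q_{ik}^{(e+1)}=0$ when $c=e$, and $q_{ik}^{(e)}=0,\ q_{ik}^{(e+1)}=z$ when $c=e+1$, while $q_{ik}^{(u)}=0$ for all other $u$.

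First I would dispose of \eqref{form3}. For $u<e$, Lemma \ref{l:samecomp}(a) applies (since $q_{ij}^{(u)}=0$ for $u<e$ and $q_{ij}^{(m)}=0$ by monochromaticity), giving $(B_i^{(u)})'=B_i^{(u)}$, hence $\widetilde q_{ik}^{(u)}=q_{ik}^{(u)}=0$. For $u\ge e+2$, the argument in the proof of Proposition \ref{p:summarize}(b) shows $(T_i^{(u)})'\simeq T_i^{(u)}$ and that the approximation is unchanged, using $\Hom(T_i^{(u+1)},T_j^{(1)}[1])=0$ (Lemma \ref{l:div2}) and $q_{ij}^{(u)}=0$; again $\widetilde q_{ik}^{(u)}=q_{ik}^{(u)}=0$. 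This leaves only the levels $e$ and $e+1$.

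Next I would reduce the two remaining entries to a single unknown. By Lemma \ref{l:comp} the (non-minimal) approximation $T_i^{(e)}\to (T_k)^{pq}\amalg Y^p\amalg X$ has cone $(T_i^{(e+1)})'\amalg C'$; stripping the split summand $C'$ off its target identifies $(B_i^{(e)})'\amalg C'\simeq (T_k)^{pq}\amalg Y^p\amalg X$. Since $Y$ contains no copy of $T_k$ and $X$ contributes exactly $q_{ik}^{(e)}$ copies, counting $T_k$ gives
\[
\widetilde q_{ik}^{(e)}=pq+q_{ik}^{(e)}-a,
\]
where $a$ is the multiplicity of $T_k$ in $C'$. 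Likewise Lemma \ref{l:modtri}(b) expresses $(B_i^{(e+1)})'$ as $\big(B_i^{(e+1)}\amalg (T_j^{(1)})^p\big)/C'$ with $\alpha(C')\simeq C'$; as $T_k\not\simeq T_j^{(1)}$, counting $T_k$ yields
\[
\widetilde q_{ik}^{(e+1)}=q_{ik}^{(e+1)}-a.
\]
Thus the same number $a$ governs both entries, and non-negativity of multiplicities forces $a\le q_{ik}^{(e+1)}$ and $a\le pq+q_{ik}^{(e)}$.

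Finally I would pin down $a$, which is the crux. If $c=e$ then $q_{ik}^{(e+1)}=0$ forces $a=0$, giving $\widetilde q_{ik}^{(e)}=pq+q_{ik}^{(e)}$ and $\widetilde q_{ik}^{(e+1)}=0$, matching \eqref{form1}--\eqref{form2}. If $c=e+1$ then $q_{ik}^{(e)}=0$, so $a\le\min(pq,q_{ik}^{(e+1)})$; here I invoke that $Q_{T'}$, being the coloured quiver of a tilting object, satisfies condition (II), so $\widetilde q_{ik}^{(e)}$ and $\widetilde q_{ik}^{(e+1)}$ cannot both be non-zero. Were $a<\min(pq,q_{ik}^{(e+1)})$, both $pq-a$ and $q_{ik}^{(e+1)}-a$ would be positive, contradicting monochromaticity; hence $a=\min(pq,q_{ik}^{(e+1)})$, which produces precisely the two branches of \eqref{form1} and \eqref{form2}. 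The main obstacle is exactly this determination of $a$: instead of computing $C'$ explicitly, the argument extracts it from the already-established monochromaticity of the target quiver, and it is this step that makes the maximum, and the $\max\{0,-\}$ truncation, appear.
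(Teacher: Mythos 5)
Your proof is correct and follows essentially the same route as the paper: both arguments use Lemmas \ref{l:comp} and \ref{l:modtri} to express $(B_i^{(e)})'$ and $(B_i^{(e+1)})'$ in terms of the extra summand $C'$, and then determine the multiplicity of $T_k$ in $C'$ by appealing to Proposition \ref{l:disjoint} (i.e.\ monochromaticity of $Q_{T'}$). Your packaging of the three subcases into the single unknown $a$ with $a=\min(pq,q_{ik}^{(e+1)})$ is a mild streamlining of the paper's case-by-case computation, but the underlying ideas are identical.
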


\begin{proof}
%

We first deal with the case where $c=e$ and $z>0$. By assumption $X$ in the triangle (\ref{i-tri}) 
has $z$ copies of $T_k$, so  
$(T_k)^{pq} \amalg Y^p \amalg X$ has
$pq +z$ copies of $T_k$. Hence to show (\ref{form1}) it is sufficient to show that $C'$ in the triangle 
$$T_i^{(e+1)} \to (T_i^{(e+1)})' \amalg C' \to (T_j^{(1)})^p \to$$
has no copies of $T_k$. This follows 
directly from the Lemma \ref{l:modtri} and the fact that $T_k$ (by the assumption that $z>0$ and Proposition \ref{l:disjoint}) 
is not a summand in $B_i^{(e+1)}$. In this case (\ref{form2}) and (\ref{form3}) follow directly from Proposition \ref{l:disjoint}.

Consider the case with $c=e+1$ and $0 \leq z \leq pq$. We have that $X$ in the triangle (\ref{i-tri}) 
does not have $T_k$ as a direct summand.
Assume $T_k$ appears as a direct summand of $C'$ with multiplicity $z'$.
We claim that $z' =z$.
Assume first $z' < z$, then on one hand $T_k$ appears with multiplicity $z-z'>0$ in $(B_i^{(e+1)})'$.
On the other hand $T_k$ appears with multiplicity $pq-z' >0$ in $(B_i^{(e)})'$. This contradicts
Proposition \ref{l:disjoint}. Hence $z' =z$.

Therefore $(B_i^{(e)})'$
has $pq-z$ copies of $T_k$ and (\ref{form1}) and (\ref{form2}) hold. 
If $pq \neq z$, then (\ref{form3}) follows directly from the above and Proposition \ref{l:disjoint}. 
In the case $pq = z$, we also need to show that $T_k$ does not appear as a summand in  $(B_i^{(u)})'$
for $u \neq e+1$. Since $pq\ne 0$, we have $z \neq 0$,
and the result follows from Proposition \ref{p:summarize}.

Now assume
$c=e+1$ and $z >pq$.
Assume $C' = (T_k)^l \amalg C''$, where $T_k$ is not a summand in $C''$. Now since
$$(T_i^{(e+1)})' \amalg T_k^l \amalg C'' \to (T_j^{(1)})^p \amalg T_k^z \amalg Y,$$  
with $T_k$ not a summand in $Y$, is a minimal left $\add T'$-approximation, we have
that $l \leq pq < z$ and
$T_k$ appears with multiplicity $z - l > 0$ in the minimal left $\add T'/(T_i)$-approximation of $(T_i^{(e+1)})'$, hence 
$T_k$ cannot appear as a summand in the minimal left $\add T'/T_i$-approximation of $(T_i^{(e)})'$.
Hence $l = pq$, and we have completed the proof of (\ref{form1}) and (\ref{form2}) in this case. 
The case (\ref{form3}), i.e. $u \neq e,e+1$ follows from Proposition \ref{l:disjoint}.
\end{proof}
\ \\
\noindent CASE IV.
We now consider the case with $q_{ij}^{(m)} \neq 0$. 
Assume first there are no arrows from $j$ to $k$. Then we can use 
the symmetry proved in Proposition \ref{p:symmetry} and reduce to case I.
The formula is easily verified in this case.

Assume $d \neq 0$, again we can use the symmetry, this time to reduce to case III.
It is straightforward to verify that the formula holds also in this case.

Assume now that $d= 0$, i.e. we
need to consider the following case
$$
\xymatrix{
T_i \ar@<0.6ex>_{(c)}@/_2.5pc/[rr] \ar@<0.6ex>^{(m)}[r] & T_j \ar@<0.6ex>^{(0)}[r] \ar@<0.6ex>^{(0)}[l] & T_k \ar@<0.6ex>^{(m)}[l] 
\ar@<0.6ex>^{(m-c)}@/^3.5pc/[ll]
}
$$ 
Now by Proposition \ref{p:limits} we have that $c$ is in $\{m, 0 \}$. 
Assume there are $z \geq 0$ $(c)$-coloured arrows

The coloured quiver of $T'$ is of the form
$$
\xymatrix{
T_i \ar@<0.6ex>_{(c')}@/_2.5pc/[rr] \ar@<0.6ex>^{(0)}[r] & T_j^{(1)} \ar@<0.6ex>^{(m)}[r] \ar@<0.6ex>^{(m)}[l] & T_k \ar@<0.6ex>^{(0)}[l] 
\ar@<0.6ex>^{(m-c')}@/^3.5pc/[ll]
}
$$ 
and applying the symmetry of Proposition \ref{p:symmetry} we have that if $z>0$, then
$c' \in \{ 0,m\}$ by Proposition \ref{p:limits}.
Hence for all $u \not \in \{ 0, m \}$ we have that
$\widetilde{q}_{ik}^{(u)} = q_{ik}^{(u)} = 0$.
Therefore it suffices to show that 
$\widetilde{q}_{ik}^{(u)} = q_{ik}^{(u)}$, for $u \in \{ 0,m\}$.
This is a direct consequence of the following. 

\begin{lemma}
Assume we are in the above setting. A map $T_i \to T_k$ or $T_k \to T_i$ is irreducible in $\add T$ 
if and only if it is irreducible in $\add T'$. 
\end{lemma}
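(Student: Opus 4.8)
The plan is to show that for each of the two spaces $\Hom_{\C}(T_i,T_k)$ and $\Hom_{\C}(T_k,T_i)$ the subspace of reducible maps is the same whether computed in $\add T$ or in $\add T'$. Since $T_i$ and $T_k$ are non-isomorphic indecomposables, a map between them is irreducible exactly when it does not lie in the square of the radical, so this is precisely the assertion of the lemma. I would first note that $\add(T/T_j)=\add(T'/T_j^{(1)})=\add(\widetilde T\amalg T_i)$, so the only indecomposable distinguishing $\add T$ from $\add T'$ is $T_j$ versus $T_j^{(1)}$. Writing
$\Rad^2_{\add T}(T_i,T_k)=S_0+\Rad(T_j,T_k)\Rad(T_i,T_j)$ and
$\Rad^2_{\add T'}(T_i,T_k)=S_0+\Rad(T_j^{(1)},T_k)\Rad(T_i,T_j^{(1)})$,
where $S_0$ is the common contribution of factorizations through the shared summands $\widetilde T,T_i,T_k$, it suffices to prove that each of the two extra terms is already contained in $S_0$.

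For the term coming from $T_j$ I would factor the first map. The key point is that the middle term $B_j^{(m)}$ of the exchange triangle $T_j^{(m)}\to B_j^{(m)}\to T_j$ lies in $\add\widetilde T$: the multiplicities of $T_i$ and $T_k$ in $B_j^{(m)}$ are $q_{ji}^{(m)}$ and $q_{jk}^{(m)}$, and these vanish because skew-symmetry (condition (III), Proposition \ref{p:symmetry}) identifies them with $q_{ij}^{(0)}$ and $q_{kj}^{(0)}$, which are zero by monochromaticity (condition (II)) since $q_{ij}^{(m)}\ne 0$ and $q_{kj}^{(m)}=q_{jk}^{(0)}\ne 0$. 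As $g_j^{(0)}\colon B_j^{(m)}\to T_j$ is a right $\add(T/T_j)$-approximation, any $\alpha\colon T_i\to T_j$ factors as $\alpha=g_j^{(0)}\alpha'$, so every element $\beta\alpha$ of $\Rad(T_j,T_k)\Rad(T_i,T_j)$ is routed through $B_j^{(m)}\in\add\widetilde T$ and hence lies in $S_0$. Dually, for the term coming from $T_j^{(1)}$ I would factor the second map through the minimal left $\add(T/T_j)=\add(T'/T_j^{(1)})$-approximation $f_j^{(1)}\colon T_j^{(1)}\to B_j^{(1)}$; here $B_j^{(1)}\in\add\widetilde T$ because the multiplicities $q_{ji}^{(1)}$ and $q_{jk}^{(1)}$ equal $q_{ij}^{(m-1)}$ and $q_{kj}^{(m-1)}$, again zero by monochromaticity. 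This gives $\Rad(T_j^{(1)},T_k)\Rad(T_i,T_j^{(1)})\subseteq S_0$ as well, whence $\Rad^2_{\add T}(T_i,T_k)=S_0=\Rad^2_{\add T'}(T_i,T_k)$.

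The case of maps $T_k\to T_i$ is handled by the same two factorizations with source and target interchanged (factor $T_k\to T_j$ through $g_j^{(0)}$, and $T_j^{(1)}\to T_i$ through $f_j^{(1)}$), using again that $B_j^{(m)}$ and $B_j^{(1)}$ lie in $\add\widetilde T$. The main obstacle I anticipate is exactly the bookkeeping needed to see that these two middle terms avoid both $T_i$ and $T_k$; once this is established from skew-symmetry together with monochromaticity, the approximation property of the exchange triangles does the rest. The desired equality $\widetilde q_{ik}^{(u)}=q_{ik}^{(u)}$ for $u\in\{0,m\}$ then follows, since by skew-symmetry these colours count precisely $\dim\Irr_{\add T}(T_i,T_k)$ and $\dim\Irr_{\add T}(T_k,T_i)$ together with their primed analogues.
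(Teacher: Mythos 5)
Your proof is correct and follows essentially the same route as the paper's: both directions come down to routing the passage through $T_j$ (resp.\ $T_j^{(1)}$) through the approximation $B_j^{(m)} \to T_j$ (resp.\ $T_j^{(1)} \to B_j^{(1)}$) and observing that monochromaticity together with skew-symmetry forces these middle terms to lie in $\add \widetilde T$. The only difference is presentational: you phrase the argument in terms of $\Rad^2$ rather than explicit factorizations of a putative reducible map, which is equivalent here since $T_i \not\simeq T_k$ and the endomorphism rings of the indecomposables are $K$.
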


\begin{proof}
Assume $T_i \to T_k$ is not irreducible in $\add T'$, and
that $T_i \to T_k = T_i \to U \to T_k$ for some $U = \amalg_t U_t \in \add T'$,
with $U_t$ the indecomposable direct summands of $U$. 
Note that by Lemma \ref{l:div}(a), we can assume that all $T_i \to U_t$ and all $U_t \to T_k$ are non-isomorphisms.
If there is some index $t$ such that $U_t \simeq T_j^{(1)}$,
the map $U_t \to T_k$ factors through some $U'$ in $\add (T/(T_i \amalg T_k))$,
since there are no $(1)$-coloured arrows $j \to i$ or $j \to k$ in the coloured quiver of $T$.
This shows that $T_i \to T_k$ is not irreducible in $\add T$. 

Assume $T_i \to T_k$ is not irreducible in $\add T$,
and that $T_i \to T_k = T_i \to V \to T_k$ for some $V = \amalg_t V_t \in \add T$,
with $V_t$ the indecomposable direct summands of $V$. 
If there is some index $t$ such that $V_t \simeq T_j$,
the map $T_i \to V_t$ factors through $B_j^{(m)}$, which is in $\add (T/(T_i \amalg T_j \amalg T_k)) \subset \add T'$,
since there are no $(0)$-coloured arrows $i \to j$ or $k \to j$ in the coloured quiver of $T$.
This shows that $T_i \to T_k$ is not irreducible in $\add T'$. 

By symmetry, the same property holds for maps $T_k \to T_i$.
\end{proof}

Thus we have proven that the formula holds in all four cases, and this finishes the proof of Theorem \ref{t:main}.

\section{$m$-cluster-tilted algebras}

An $m$-cluster-tilted algebra is an algebra given as $\End_{\C}(T)$
for some tilting object $T$ in an $m$-cluster category $\C =\C_m$.
Obviously, the subquiver of the coloured quiver of $T$ given by the $(0)$-coloured maps is
the Gabriel quiver of $\End_{\C}(T)$.

An application of our main theorem is that the quivers of the $m$-cluster-tilted algebras
can be combinatorially determined via repeated (coloured) mutation.
For this one needs transitivity in the tilting
graph of $m$-tilting objects.
More precisely, we need the following, which is also pointed out in \cite{zz}.

\begin{proposition}
Any $m$-tilting object can be reached from any other 
$m$-tilting object via iterated mutation. 
\end{proposition}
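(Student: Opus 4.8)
The plan is to establish connectivity of the tilting graph by combining two ingredients: a passage from the $m$-cluster category down to the derived category (or to $\mod H$), where transitivity results for tilting objects are already known, together with the fact that mutation in $\C_m$ lifts the classical tilting mutation. First I would recall that tilting objects in $\C_m$ correspond, via the fundamental domain, to certain configurations built from exceptional $H$-modules and shifts, and that there is at least one distinguished tilting object, namely $T = H$ viewed in $\C_m$ (the image of the free module of rank $n$, all in degree $0$). The goal then reduces to showing that every $m$-tilting object can be connected to this fixed base point $H$ by a finite sequence of mutations, since mutation is manifestly symmetric (reversible): the exchange triangle $T_j \to B_j^{(0)} \to T_j^{(1)} \to$ is one of the $m+1$ exchange triangles of Proposition \ref{p:number}, and mutating $T_j^{(1)}$ back in the appropriate colour recovers $T_j$.

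The key step is an induction that decreases some complexity measure of a tilting object, bringing it closer to $H$. A natural measure is built from the degrees $\delta(T_i)$ of the indecomposable summands: I would try to show that any tilting object with a summand of maximal degree admits a mutation strictly reducing the multiset of degrees (in an appropriate lexicographic or summed sense), using Lemma \ref{l:div}(b), which controls how $\delta(T_i^{(c+1)})$ relates to $\delta(T_i^{(c)})$ and in particular allows one to lower degrees by moving through the exchange triangles. Iterating, one reaches a tilting object all of whose summands lie in degree $0$, i.e. a genuine tilting $H$-module in $\mod H$. At that point one invokes the classical result that the tilting graph of (classical) tilting modules over a hereditary algebra is connected — equivalently, that any two such are linked by a chain of single-summand mutations — which for the hereditary case is standard and can be cited.

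The main obstacle I anticipate is the degree-reduction argument: it is not automatic that a tilting object not already in $\mod H$ admits a mutation lowering its total degree, because mutation in one summand can in principle raise the degree of that summand (Lemma \ref{l:div}(b) permits increases of $1$, or even $2$ when $\delta = m-1$). The careful point is to choose the right summand to mutate — presumably one of the highest degree — and the right colour, and to verify that such a choice always strictly decreases the chosen complexity measure, perhaps by a more refined invariant than the naive degree sum (for instance, weighting by how many summands sit at the top degree, or tracking the number of summands outside $\mod H$). One must also handle the degenerate degree distributions of Lemma \ref{l:div}(c), where two complements share a degree. Alternatively, and perhaps more cleanly, I would lift the whole configuration to $\D^b(H)$ and argue there that the orbit structure under $G = \tau^{-1}[m]$ together with known transitivity of tilting in $\D^b(H)$ forces connectivity; this sidesteps the combinatorial bookkeeping but requires verifying that mutation in $\C_m$ is compatible with the chosen lift.

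Given the difficulty of making the reduction fully rigorous within the scope of this paper, I expect the cleanest route is simply to reduce to an already-published transitivity statement: cite the transitivity of tilting-object mutation in $m$-cluster categories from \cite{zz} (as the paper itself notes this is pointed out there), and present the argument above only as the conceptual reason, namely that each mutation is reversible and that degree-reducing mutations always exist by Lemma \ref{l:div}, eventually landing in the classical hereditary tilting situation where connectivity is known. The essential verification that remains is reversibility of mutation, which follows directly because $T_j$ is recovered as a complement of $T/T_j$ and hence is obtained from $T'= T/T_j \amalg T_j^{(1)}$ by a mutation in the vertex $j$.
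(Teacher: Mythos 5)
Your overall skeleton (connect an arbitrary tilting object to a degree-zero one, then handle the degree-zero case by classical/cluster-theoretic connectivity, using that mutation is reversible) is the same as the paper's, but the proposal has a genuine gap at its central step. You correctly identify that the degree-reduction argument is the crux, you observe yourself that Lemma \ref{l:div}(b) does not obviously yield a mutation that strictly decreases any naive degree statistic, and you then do not supply the missing argument. Your fallback --- ``cite the transitivity of tilting-object mutation in $m$-cluster categories from \cite{zz}'' --- is circular: that is precisely the statement being proved. The paper resolves this by citing \cite{z} for the specific intermediate fact that every tilting object can be connected by mutations to one all of whose summands have degree $0$; that reference (not \cite{zz} for the full statement) is what makes the reduction legitimate, and without it or a worked-out descent argument your proof does not close.

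The second half also diverges from the paper in a way that matters. You propose to finish inside $\mod H$ by invoking connectivity of the exchange graph of classical tilting $H$-modules as ``standard.'' This is delicate: an almost complete tilting module over a hereditary algebra has a second complement in $\mod H$ only when it is faithful, so connectivity of that graph is a nontrivial theorem and, even granted, one must check that each module-level exchange is one of the $m+1$ exchange triangles in $\C_m$. The paper instead passes to the $1$-cluster category $\C_1$, where connectivity is supplied by \cite{bmrrt}; the intermediate $\C_1$-tilting objects are induced by tilting modules over various derived-equivalent algebras $KQ_i$ (so they may involve shifts and need not stay in $\mod H$), and one then checks that each is an $m$-tilting object and that each $\C_1$-mutation is also a $\C_m$-mutation. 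Your reversibility remark is correct and unproblematic, but as written the proposal does not constitute a proof: both the descent to degree zero and the degree-zero connectivity step rest on appeals that are either circular or not actually standard in the form you need.
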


\begin{proof}
We sketch a proof for the convenience of the reader.
Let $T'$ be a tilting object in an $m$-cluster category $\C$ of the hereditary algebra $H=KQ$,
and let $\C_1$ be the $1$-cluster category of $H$. 
By \cite{z}, there is a tilting object $T$ of degree 0, i.e. all direct summands in $T$ have degree 0,
such that $T$ can be reached from $T'$ via mutation. It is sufficient to show that
the canonical tilting object $H$ can be reached from $T$ via mutation.
Since $T$ is of degree 0, it is induced from a $H$-tilting module. Especially $T$  
is a tilting object in $\C_1$. Since $T$ and $H$ are tilting objects in $\C_1$, by \cite{bmrrt} there
are $\C_1$-tilting objects $T= T_0 , T_1, \dots, T_r= H$, such
that $T_i$ mutates to $T_{i+1}$ (in $\C_1$) for $i= 0, \dots, r-1$.
Now each $T_i$ is induced by a tilting module for some $Q_i$ where all $KQ_i$ 
are derived equivalent to $KQ$. Hence, each $T_i$ is easily seen to be an $m$-cluster tilting object.
Since $T_{i+1}$ differs from $T_i$ in only one summand the mutations in $\C_1$ are also mutations in $\C$.
This concludes the proof.
\end{proof}

A direct consequence of the transitivity is the following.

\begin{cor}
For an $m$-cluster category $\C = \C_m$ of the acyclic quiver $Q$, all quivers
of $m$-cluster-tilted algebras are given by repeated coloured mutation of $Q$.
\end{cor}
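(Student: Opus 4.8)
The plan is to deduce the corollary directly from Theorem~\ref{t:main} and the transitivity proposition just established, reducing the determination of the Gabriel quiver of an arbitrary $m$-cluster-tilted algebra to an iteration of coloured quiver mutations starting from a single coloured quiver canonically attached to $Q$. First I would fix the base point: the canonical tilting object $H = \amalg_i P_i$, all of whose indecomposable summands are the projectives $P_i$ of degree $0$. Its coloured quiver $Q_H$ is computed directly---the $(0)$-coloured arrows form the Gabriel quiver of $\End_{\C}(H)$, which is $Q$ (up to taking the opposite quiver), and a short computation shows that the intermediate colours vanish, so that skew-symmetry (condition (III)) makes the $(m)$-coloured arrows the reversed arrows and determines $Q_H$ from $Q$ alone. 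It is this coloured quiver that I read as ``the coloured quiver of $Q$''.

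Next I would argue by induction on the length of a mutation sequence. By the transitivity proposition every $m$-tilting object $T$ is reachable from $H$ through a finite chain $H = T_0, T_1, \dots, T_r = T$ of single-summand exchanges; refining each step, if necessary, into elementary mutations $T \mapsto T/T_{j} \amalg T_{j}^{(1)}$ governed by an exchange triangle $T_{j} \to B_{j}^{(0)} \to T_{j}^{(1)} \to$ (iterating in a fixed direction $j$ cycles through all $m+1$ complements, so every exchange is of this form), Theorem~\ref{t:main} gives $Q_{T_{s+1}} = \mu_{j_s}(Q_{T_s})$ at each step and hence $Q_T = \mu_{j_r}\cdots\mu_{j_1}(Q_H)$. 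Since, as noted at the opening of this section, the $(0)$-coloured subquiver of $Q_T$ is exactly the Gabriel quiver of $\End_{\C}(T)$, carrying out the corresponding sequence of coloured mutations on $Q_H$ and reading off the $(0)$-coloured arrows yields the quiver of the $m$-cluster-tilted algebra $\End_{\C}(T)$. Letting $T$ range over all tilting objects produces every $m$-cluster-tilted algebra, which is the assertion.

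The inductive passage is an immediate application of Theorem~\ref{t:main}, so I expect the only genuine obstacle---and it is a mild one---to lie at the two ends of the argument: identifying $Q_H$ correctly from $Q$ through conditions (I)--(III), and verifying that each single-summand exchange in the transitivity chain is realised as a (possibly iterated) mutation in one fixed vertex direction, so that it falls under the hypothesis of Theorem~\ref{t:main}. Both are routine given the results already in hand, and the corollary follows.
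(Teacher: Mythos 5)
Your proposal is correct and follows essentially the same route as the paper, which simply records the corollary as a direct consequence of the transitivity proposition combined with Theorem~\ref{t:main}: start from the canonical tilting object $H$ with its coloured quiver determined by $Q$ (colours $0$ and $m$ only), reach any tilting object by iterated elementary mutations, and read off the $(0)$-coloured subquiver. Your additional care about refining each exchange into elementary mutations $T\mapsto T/T_j\amalg T_j^{(1)}$ in a fixed vertex is a valid and worthwhile explicit justification of a step the paper leaves implicit.
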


\section{Combinatorial computation}\label{sec:cc}
In this section, we discuss concrete computation with tilting objects
in an $m$-cluster tilting category. 

An exceptional indecomposable object in $\mod H$ is
uniquely determined by its image $[T]$ in the Grothendieck group $K_0(\mod H)$.  
There is a map from $\mathcal D^b(\mod H)$ to $K_0(\mod H)$ which, for
$T\in \mod H$, takes $T[i]$ to $(-1)^{i}[T]$.  
An exceptional indecomposable in $\mathcal D^b(\mod H)$
can be uniquely specified 
by its class in $K_0(\mod H)$ together with its degree.  

The map from $\mathcal D^b(\mod H)$ to $K_0(\mod H)$ does not descend to 
$\mathcal C$.  However, if we fix our usual choice of fundamental
domain in $\mathcal D^b(\mod H)$, then we can identify the indecomposable
objects in it as above.  

Let us define the combinatorial data corresponding to a tilting object
$T$ to be $Q_T$ together with 
$([T_i], \deg T_i)$ for $1\leq i \leq n$.

\begin{thm} Given the combinatorial data for a tilting object $T$
in $\mathcal C$,
it is possible to determine, by a purely combinatorial procedure, 
 the combinatorial data for the tilting object
which results from an arbitrary sequence of mutations applied to $T$.  
\end{thm}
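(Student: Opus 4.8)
The plan is to show that a single mutation can be performed combinatorially on the data $(Q_T,\{([T_i],\deg T_i)\}_{i=1}^n)$; an arbitrary sequence then follows by iteration. Fix the vertex $j$ at which we mutate, so $T' = T/T_j \amalg T_j^{(1)}$. The coloured quiver part is immediate from Theorem \ref{t:main}, which gives $Q_{T'} = \mu_j(Q_T)$, and this is by definition a purely combinatorial operation. Moreover the summands $T_i$ with $i \neq j$ are common to $T$ and $T'$, so their data $([T_i],\deg T_i)$ is unchanged. Everything therefore reduces to computing the single new pair $([T_j^{(1)}],\deg T_j^{(1)})$ from the given data.

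First I would recover the middle term of the exchange triangle: by the definition of $Q_T$ the multiplicity of $T_k$ in $B_j^{(0)}$ is exactly the number of $(0)$-coloured arrows $q_{jk}^{(0)}$, so $B_j^{(0)} \simeq \amalg_k (T_k)^{q_{jk}^{(0)}}$ is read directly off $Q_T$, together with the degrees of all its indecomposable summands. Lifting the exchange triangle $T_j \to B_j^{(0)} \to T_j^{(1)} \to T_j[1]$ to $\D$ and applying the map $\D \to K_0(\mod H)$ that sends $M[i]\mapsto(-1)^i[M]$, the triangle relation gives $(-1)^{\deg T_j^{(1)}}[T_j^{(1)}] = \sum_k q_{jk}^{(0)}(-1)^{\deg T_k}[T_k] - (-1)^{\deg T_j}[T_j]$ in $K_0(\mod H)$; call this computed vector $v$. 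Since the class of any module is a non-negative combination of the classes of the simples and $v\neq 0$, exactly one of $v,-v$ is such a dimension vector; that one is $[T_j^{(1)}]$, and its sign records the parity of $\deg T_j^{(1)}$. Because an exceptional indecomposable of $\mod H$ is determined by its class in $K_0(\mod H)$, as recalled at the start of this section, $[T_j^{(1)}]$ pins down the underlying module of $T_j^{(1)}$ uniquely.

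It remains to determine $\deg T_j^{(1)}$ itself, which is the crux. Here I would invoke Lemma \ref{l:div}(b) with $c=0$: it forces $\deg T_j^{(1)} - \deg T_j$ (modulo $m+1$) to lie in $\{0,1\}$ when $\deg T_j \notin \{m-1,m\}$, to equal $1$ when $\deg T_j = m$, and to lie in $\{0,1,2\}$ when $\deg T_j = m-1$. Since $\deg T_j$ is part of the given data, in the generic range the two candidate differences $0$ and $1$ have opposite parities and so are separated by the parity already found from the sign of $v$, while $\deg T_j = m$ forces $\deg T_j^{(1)} = 0$ outright. The one genuinely delicate situation is $\deg T_j = m-1$, where the two even candidates (degree $m-1$ versus the wrap-around to degree $0$) may share a parity; I would resolve it using the already-known degrees of the summands of $B_j^{(0)}$ together with whether $[T_j^{(1)}]$ is the class of a projective (the only modules permitted at degree $m$), which distinguishes a triangle supported in a single degree from one crossing the boundary of the fundamental domain.

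The main obstacle is precisely this last step: the signed $K_0$-class records only the parity of the degree, so the exact degree must be extracted from the homological constraints of Lemma \ref{l:div}(b) and from the degree data of $B_j^{(0)}$. Some care is also needed in lifting the exchange triangle to $\D$, since the functor $\D \to K_0(\mod H)$ does not descend to $\C$ and the action of $G=\tau^{-1}[m]$ on $K_0$ must be tracked so that the fundamental-domain representatives $T_k$ are exactly the objects entering the triangle relation; once the degree is settled this bookkeeping is routine. With $([T_j^{(1)}],\deg T_j^{(1)})$ in hand the data of $T'$ is complete, and applying the procedure repeatedly computes the combinatorial data after any sequence of mutations.
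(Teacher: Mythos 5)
Your overall architecture --- reduce to a single mutation, get the new coloured quiver from Theorem~\ref{t:main}, read $B_j^{(0)}$ off the $(0)$-coloured arrows, and recover $([T_j^{(1)}],\deg T_j^{(1)})$ from the exchange triangle via $K_0(\mod H)$ with the sign recording the parity of the degree --- is exactly the paper's Lemma~\ref{one}. The gap is in how you handle the case where the exchange triangle crosses the boundary of the fundamental domain, and it is a genuine one. The map $\D\to K_0(\mod H)$ does not descend to $\C$, and the representatives of the summands of $B_j^{(0)}$ and of $T_j^{(1)}$ that actually sit in a lifted triangle in $\D$ need not be the fundamental-domain representatives; they can differ by a power of $G=\tau^{-1}[m]$, which acts on $K_0$ by a sign times the Coxeter transformation. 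Consequently, in the wrap-around case your vector $v=\sum_k q_{jk}^{(0)}(-1)^{\deg T_k}[T_k]-(-1)^{\deg T_j}[T_j]$ is simply not $\pm[T_j^{(1)}]$, so the step ``exactly one of $v,-v$ is a dimension vector, and that one is $[T_j^{(1)}]$'' can silently return a wrong class (or no dimension vector at all). Your case analysis also misplaces where the danger lies: you treat $\deg T_j=m$ as unproblematic because Lemma~\ref{l:div}(b) forces $\deg T_j^{(1)}=0$, but that is precisely a wrap-around, and there the \emph{class}, not the degree, is what the naive subtraction gets wrong. For $\deg T_j=m-1$ your proposed tiebreaker (projectivity of $[T_j^{(1)}]$, degrees of summands of $B_j^{(0)}$) again only addresses the degree and is not justified; distinguishing ``degree $m-1$'' from ``wrapped around to low degree'' cannot be done from $v$ alone when $v$ is not the right class in the latter case.

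The paper sidesteps all of this rather than patching it: Lemma~\ref{one} is stated to \emph{fail} whenever its answer would be a non-projective in degree $m$ or anything in degree $m+1$, and the missing complements are recovered by the dual procedure of Lemma~\ref{two}, which uses the $(m)$-coloured arrows to read off $B_j^{(m)}$ and the exchange triangle $T_j^{(m)}\to B_j^{(m)}\to T_j\to$ to walk around the cycle of $m+1$ complements in the opposite direction. Since by Lemma~\ref{l:div}(c) the degrees of the complements wrap around the fundamental domain exactly once, the forward procedure succeeds for $T_j^{(1)},\dots,T_j^{(k)}$ and the backward one for $T_j^{(m)},\dots,T_j^{(k+1)}$, so between them every complement (in particular $T_j^{(1)}$, even when $k=0$) is determined without ever computing across the boundary. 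If you want to salvage your single-direction approach you would have to track the $G$-twist explicitly and apply the Coxeter transformation to the affected classes; as written, the argument does not go through in the boundary cases.
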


\begin{proof} Clearly, it suffices to show that, for any $i$, 
we can determine the class and degree for $T_i^{(j)}$.  If we can do
that then,
by
the coloured mutation procedure, we can determine the coloured
quiver for $(T/T_i)\amalg T_i^{(j)}$, and by applying this procedure
repeatedly, we can calculate the result of an arbitrary sequence of mutations.

Since we are given $Q_T$, we know $B_i^{(0)}$, and we can calculate 
$[B_i^{0}]$.  Now we have the following lemma:

\begin{lemma}\label{one} $[T^{(1)}_i]=[B_i^{(0)}]-[T^{(0)}_i]$, and 
$\deg(T^{(1)}_i)=\deg T_i$ or $\deg T_i+1$, whichever is consistent with the sign of the
class of $[T^{(1)}_i]$, unless this yields a non-projective indecomposable
object in degree $m$, or an indecomposable of degree $m+1$. \end{lemma}

\begin{proof} The proof is immediate from the exchange triangle  
$T_i \rightarrow B_i^{(0)}\rightarrow T_i^{(1)}\rightarrow$.
\end{proof}

Applying this lemma, and supposing that we are not in the case where its
procedure fails, we can determine the class and degree $T^{(1)}_i$.  By the coloured
mutation procedure, we can also determine the coloured quiver for 
$\mu_i(T)$.  We therefore have all the necessary data to apply Lemma~\ref{one}
again.  Repeatedly applying the lemma, there is some $k$ such that we
can calculate the class and degree of $T_i^{(j)}$ for $1\leq j\leq k$,
and the procedure described in the lemma fails to calculate
$T_i^{(k+1)}$.  

We also have the following lemma:

\begin{lemma}\label{two} $[T^{(m)}_i]=[B_i^{(m)}]-[T^{(0)}_i]$, 
and $\deg T^{(m)}_i=
\deg T_i$ or $\deg T_i-1$, whichever is consistent with the sign of 
$[T^{(m)}_i]$, unless this yields an indecomposable in degree $-1$.  
\end{lemma}

Applying this lemma, starting again with $T$, we can obtain the degree
and class for $T_i^{(m)}$.  We can then determine the coloured
quiver for $\mu_i^{-1}(T)$, and we are now in a position to apply 
Lemma~\ref{two} again.  
The last complement which
Lemma~\ref{two} will successfully determine is $T^{(k+1)}_i$.  
It follows that we can determine the degree and class of any complement to
$T/T_i$.  
\end{proof}

\section{The $m$-cluster complex}

In this section, we discuss the application of our results to the 
study of the $m$-cluster complex, a simplicial complex defined in \cite{FR} 
for a finite root system $\Phi$.  
We shall begin by stating our results for the $m$-cluster complex
in purely combinatorial language,
and then briefly describe how they follow from the representation-theoretic
perspective in the rest of the paper.  For simplicity, we restrict to the
case where $\Phi$ is simply laced.


Number the vertices
of the Dynkin diagram for $\Phi$ from 1 to $n$.  
The $m$-coloured almost positive roots, $\mP$, consist
of $m$ copies of the positive roots, numbered $1$ to $m$, together
with a single copy of the negative simple roots.  We refer to an element
of the $i$-th copy of $\Phi^+$ as having colour $i$, and we write such an
element as $\beta^{(i)}$.  

Since the Dynkin diagram for $\Phi$ is a tree, it is bipartite; 
we fix 
a bipartition $\{1,\dots,n\}=I_+\cup I_-$.

The $m$-cluster complex, $\mD$, is a simplicial complex on the ground set 
$\mP$.  Its maximal faces are 
called $m$-clusters.  The definition of $\mD$ 
is combinatorial; we refer the reader
to \cite{FR}.  The $m$-clusters each consist of $n$ elements of
$\mP$ \cite[Theorem 2.9]{FR}.
Every codimension 1 face of $\mD$ is contained in exactly
$m+1$ maximal faces \cite[Proposition 2.10]{FR}.   
There is a certain combinatorially-defined bijection
$\mR \colon \mP\rightarrow \mP$, which takes faces of $\mD$ to
faces of $\mD$ \cite[Theorem 2.4]{FR}.

It will be convenient to consider {\it ordered $m$-clusters}.  An ordered
$m$-cluster is just a $n$-tuple from $\mP$, 
the set of whose elements forms an $m$-cluster.
Write $\mS$ for the set of ordered $m$-clusters.

For each ordered $m$-cluster $C=(C_1,\dots,C_n)$, we will define a coloured quiver $Q_C$.  
We will
also define an operation $\mu_j:\mS\rightarrow \mS$, which takes 
ordered $m$-clusters to ordered $m$-clusters, changing only the $j$-th
element.

We will define both operations inductively.  The set $-\Pi$ of negative simple 
roots forms an $m$-cluster.  Its associated quiver is defined by drawing, for
each edge $ij$ in the Dynkin diagram, a pair of arrows.  Suppose $i\in I_+$
and $j \in I_-$. Then we draw an arrow from $i$ to $j$ with colour 
$0$, and an arrow from $j$ to $i$ with colour $m$.  

Suppose now that we have some ordered $m$-cluster $C$, together with its
quiver $Q_C$.  We will now proceed to define $\mu_j(C)$.
Write $q_{jk}^{(0)}$ for the number of arrows in $Q_C$ 
of colour
$0$ from $j$ to $k$.  
Define: 

$$\beta= -C_j+\sum_{k\ne j} q_{jk}^{(0)}C_k.$$

Let $c$ be the colour of $C_j$.  We define $\mu_j(C)$ by replacing
$C_j$ by some other element of $\mP$, according to the following rules:

\begin{itemize}
\item
If $C_j$ is positive and $\beta$ is positive, replace $C_j$ by $\beta^{(c)}$.
\item
If $C_j$ is positive and $\beta$ is negative, replace 
$C_j$ by $\mR(-\beta^{(c)})$.  
\item
If $C_j$ is negative simple $-\alpha_i$, define $\gamma$ by 
$\gamma^{(0)}=\mR(-\alpha_i)$, and then replace $C_j$ by 
$\beta+C_j-\gamma$, with colour zero.  
\end{itemize}
Define the quiver for the $m$-cluster $\mu_j(C)$ by the coloured quiver
mutation rule from Section 2.  
Since any $m$-cluster can be obtained from $-\Pi$ by a sequence of 
mutations, the above suffices to define $\mu_j(C)$ and $Q_C$ for any
ordered $m$-cluster $C$.  

\begin{proposition} The operation $\mu_j$ defined above takes 
$m$-clusters to $m$-clusters, and the $m$-clusters 
$\mu_j^{i}(C)$ for $0\leq i \leq m$ are exactly those containing all the 
$C_i$ for $i\ne j$.  
\end{proposition}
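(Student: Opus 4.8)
The plan is to prove this combinatorial statement by transporting it to the representation theory of the $m$-cluster category $\mathcal{C}=\mathcal{C}_m$ of $H=KQ$, where $Q$ is the alternating orientation of the Dynkin diagram of $\Phi$ determined by the bipartition $I_+\cup I_-$; this is exactly the ``representation-theoretic perspective'' promised at the start of the section. First I would set up the dictionary between ordered $m$-clusters and ordered tilting objects. Using the identification of the indecomposables of the fundamental domain with their $K_0$-classes together with their degrees (as in Section~\ref{sec:cc}), I send an $m$-coloured almost positive root to the unique indecomposable in the fundamental domain whose class and degree match, so that $-\Pi$ corresponds to the tilting object $\bigoplus_i P_i[m]$ and, for every ordered $m$-cluster $C$, the inductively defined quiver $Q_C$ agrees with the intrinsic coloured quiver $Q_T$ of the corresponding tilting object $T$. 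This is the bijection of \cite{z}, compatible with \cite{FR}; note that well-definedness of the inductive definition of $Q_C$ is then automatic, since $Q_T$ depends only on $T$ and hence only on $C$, independently of the chosen mutation path.

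Second, and this is the heart of the argument, I would verify that under this dictionary the combinatorial $\mu_j$ coincides with the categorical mutation $T\mapsto (T/T_j)\amalg T_j^{(1)}$. The class computation is immediate: since the colour-$0$ arrows record the multiplicities $q_{jk}^{(0)}$ of $T_k$ in $B_j^{(0)}$, we have $[B_j^{(0)}]=\sum_{k\ne j} q_{jk}^{(0)}[T_k]$, so the root $\beta=-C_j+\sum_{k\ne j} q_{jk}^{(0)}C_k$ is precisely $[T_j^{(1)}]=[B_j^{(0)}]-[T_j]$ of Lemma~\ref{one}, read in root coordinates (up to the sign conventions relating $K_0$-classes to coloured roots). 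The three cases defining $\mu_j$ must then be matched with the degree bookkeeping of Lemmas~\ref{one} and~\ref{two}: a positive $\beta$ means the class does not change sign, so the colour is preserved; a negative $\beta$ forces the degree to increase, and the passage to $R_m(-\beta^{(c)})$ encodes the wrap-around across the boundary of the fundamental domain, i.e.\ the exceptional ``unless'' clauses of Lemma~\ref{one}, while the negative-simple case encodes the mutation of a degree-$m$ projective summand. Showing that the Fomin--Reading map $R_m$ is exactly the combinatorial shadow of this fundamental-domain identification is the technical crux, and is where I expect the main obstacle to lie; it rests on the compatibility of $R_m$ with degrees and classes established in \cite{FR,z}.

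Third, granting the dictionary, both assertions become statements about complements. That $\mu_j$ takes $m$-clusters to $m$-clusters translates into the fact that $(T/T_j)\amalg T_j^{(1)}$ is again a tilting object, which holds because $T_j^{(1)}$ is a complement of the almost complete tilting object $T/T_j$ (Proposition~\ref{p:number}), and that its coloured quiver is legitimate follows from Theorem~\ref{t:main}; since categorical mutation at $j$ changes only the $j$-th summand, $\mu_j(C)$ is a genuine $m$-cluster agreeing with $C$ away from $j$. For the second assertion I would argue inductively that $\mu_j^{i}(C)$ corresponds to $(T/T_j)\amalg T_j^{(i)}$: mutating $(T/T_j)\amalg T_j^{(i)}$ at $j$ uses the exchange triangle $T_j^{(i)}\to B_j^{(i)}\to T_j^{(i+1)}$ and hence advances the complement by one, so the iterates run through $T_j^{(0)},T_j^{(1)},\dots,T_j^{(m)}$ and return to $T_j^{(m+1)}=T_j^{(0)}$. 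By Proposition~\ref{p:number} these are exactly the $m+1$ pairwise non-isomorphic complements of $T/T_j$, so the $\mu_j^{i}(C)$ for $0\le i\le m$ are distinct and are precisely the $m$-clusters extending the codimension-one face $\{C_k:k\ne j\}$, matching the count of $m+1$ maximal faces in \cite[Proposition 2.10]{FR}. Finally, since every $m$-cluster is reachable from $-\Pi$ by mutation (the transitivity proved in Section~7, mirroring \cite{bmrrt}), the dictionary and hence the proposition hold for all ordered $m$-clusters.
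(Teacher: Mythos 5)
Your proposal is correct and takes essentially the same route as the paper: the paper's proof is precisely the dictionary you describe ($\mP$ corresponding to indecomposables of the fundamental domain, $m$-clusters to tilting objects, $\mR$ to $[1]$), after which the statement follows from the class/degree bookkeeping of Section~\ref{sec:cc} and the fact that an almost complete tilting object has exactly $m+1$ complements cycled through by the exchange triangles. You have simply spelled out in more detail what the paper leaves as a reference to \cite{t,z} and to Section~\ref{sec:cc}.
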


The connection between the combinatorics discussed here and the representation
theory in the rest of the paper is as follows.  $\mP$ corresponds to 
the indecomposable objects
of (a fundamental domain for) $\C_m$.  
The cluster tilting objects in $\C_m$  
correspond to the $m$-clusters.  The operation $\mR$ corresponds to $[1]$.  
For further details on the translation, the reader is referred to 
\cite{t,z}.  The above proposition then follows from the approach
taken in Section \ref{sec:cc}.

\section{An alternative algorithm for coloured mutation}

Here we give an alternative description of coloured quiver mutation at vertex $j$.  

\begin{enumerate}
\item For each pair of arrows
$$ \xymatrix {i \ar^{(c)}[r] & j\ar^{(0)}[r] &k }$$
with $i\ne k$, the arrow from $i$ to $j$ of arbitrary colour $c$, and the arrow from
$j$ to $k$ of colour $0$, add a pair of arrows: an arrow from $i$ to $k$
of colour $c$, and one from $k$ to $i$ of colour $m-c$.  
\item If the graph violates property II, because for some pair of vertices $i$ and $k$ there are arrows from
$i$ to $k$ which have two different colours, cancel the same number of 
arrows of each colour, until property II is satisfied.
\item Add one to the colour of any arrow going into $j$ and subtract one 
from the colour of any arrow going out of $j$.
\end{enumerate}


\begin{proposition} The above algorithm is well-defined and 
correctly calculates coloured 
quiver mutation as previously defined.
\end{proposition}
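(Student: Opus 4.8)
The plan is to check, colour by colour, that the three--step procedure reproduces the mutation rule $\tilde q_{ik}^{(u)}$ of Section 2, organising the verification by whether the pair $\{i,k\}$ meets the mutation vertex $j$. The first observation is that steps 1 and 2 only ever insert or cancel arrows between vertices $i,k$ that are both different from $j$; hence every arrow incident to $j$ is left untouched by them and is governed entirely by step 3. Since step 3 merely relabels the colour of each arrow incident to $j$ by $\pm 1$, it realises exactly the first two branches $\tilde q_{ik}^{(u)}=q_{ik}^{(u+1)}$ (for $k=j$) and $\tilde q_{ik}^{(u)}=q_{ik}^{(u-1)}$ (for $i=j$). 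These cases are therefore immediate, and everything reduces to $j\notin\{i,k\}$.

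For $j\notin\{i,k\}$ I would first compute the raw multiplicities produced by step 1. Using skew--symmetry (III), a path $i\xrightarrow{(c)} j\xrightarrow{(0)} k$ contributes an $i\to k$ arrow of colour $c$, while a path $k\xrightarrow{(m-c)} j\xrightarrow{(0)} i$ contributes, through its reversed arrow, a further $i\to k$ arrow of colour $c$. Counting both and translating the second kind via (III), the number of colour--$c$ arrows from $i$ to $k$ after step 1 is
\[
A_c \;=\; q_{ik}^{(c)} \;+\; q_{ij}^{(c)} q_{jk}^{(0)} \;+\; q_{ij}^{(m)} q_{jk}^{(c)},
\]
where the last term is nonzero only when $q_{ij}^{(m)}\neq 0$, that is, in Case IV. A quick check that $A_c=A_{m-c}$ for the reversed pair shows step 1 preserves (III), and since it only joins distinct vertices it preserves (I); so the real content lies in step 2 and condition (II).

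The heart of the argument, and the main obstacle, is to show that among the colours $c$ with $A_c>0$ there are at most two, and that they are adjacent modulo $m+1$; only then is the instruction ``cancel the same number of arrows of each colour until (II) holds'' unambiguous and terminating. This is precisely where Proposition \ref{p:limits} is used: it forces the colour of any original $i\to k$ arrow into $\{e,e+1\}$, where $e$ is the colour of $i\to j$, so the original arrows cannot introduce a colour incompatible with the path contributions. I would then run the same division as in the proof of Theorem \ref{t:main}: in Cases I--II the unique surviving colour leaves $q_{ik}^{(u)}$ unchanged; in Case III the colours $e$ and $e+1$ both occur and cancelling $\min(A_e,A_{e+1})$ of each leaves $\max\{0,A_e-A_{e+1}\}$ of colour $e$ or $\max\{0,A_{e+1}-A_e\}$ of colour $e+1$; and in Case IV ($e=m$) the reversed--path term is active and one reduces to Case III by the symmetry of Proposition \ref{p:symmetry}, forcing the survivor into $\{0,m\}$.

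It then remains to match the surviving colour and multiplicity with the third branch of the rule. The key point to make explicit is that the differences $(q_{ij}^{(u)}-q_{ij}^{(u-1)})q_{jk}^{(0)}$ and $q_{ij}^{(m)}(q_{jk}^{(u)}-q_{jk}^{(u+1)})$ in the formula are exactly the bookkeeping of the pairwise cancellation carried out in step 2 between the adjacent colours $u-1,u$ and $u,u+1$: the subtracted summand removes the neighbouring--colour arrows that annihilate against colour $u$, the term $-\sum_{t\neq u}q_{ik}^{(t)}$ absorbs the original arrows of the other surviving colour, and the outer $\max\{0,-\}$ is the floor that cancellation produces. Performing this identification across Cases I--IV, and noting that step 2 enforces (II) while steps 1 and 3 preserve (I) and (III), shows that the algorithm is well defined and computes $\mu_j$ as defined in Section 2.
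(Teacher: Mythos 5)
Your proposal is correct and follows essentially the same route as the paper's proof: well-definedness is reduced, via Proposition~\ref{p:limits} (and its symmetric form through condition (III)), to showing that after step 1 at most two colours of arrows join any pair $i,k$, and correctness is then a case-by-case match of the post-cancellation multiplicities against the mutation formula, organised by which $(0)$-coloured arrows leave $j$. Your explicit tally $A_c = q_{ik}^{(c)} + q_{ij}^{(c)} q_{jk}^{(0)} + q_{ij}^{(m)} q_{jk}^{(c)}$ is only a compact repackaging of the same computation the paper carries out case by case.
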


\begin{proof} 
Fix a quiver $Q$ and a vertex $j$ at which the mutation is being carried out.

To prove that the algorithm is well-defined, we must show that at step 2,
there are only two colours of arrows running from $i$ to $k$ for any
pair of vertices $i$, $k$. (Otherwise there would be more than one way to
carry out
the cancellation procedure of step 2.)    

Since in the original quiver $Q$, there was only one colour of arrows from
$i$ to $k$, in order for this problem to arise, we must have added two
different colours of arrows from $i$ to $k$ at step 1.  
Two colours of arrows will only be added from
$i$ to $k$ if, in $Q$,  there are both 
$(0)$-coloured arrows from $j$ to $k$ and from
$j$ to $i$.  In this case, by property III, there are $(m)$-coloured arrows
from $i$ to $j$ and from $k$ to $j$.  It follows that in step 1, we will
add both $(0)$-coloured and $(m)$-coloured arrows.  
Applying Proposition 5.1, we see that any arrows from $i$ to $k$ in 
$Q$ are of colour 0 or $m$.   Thus, as desired, after step 1, there are 
only two colours of arrows in the quiver, so step 2 is well-defined.  

We now prove correctness.  
Let $\widetilde Q=\mu_j(Q)$.  
Write $q_{ij}^{(c)}$ for the number of $c$-coloured arrows from 
$i$ to $j$ in $Q$, and similarly $\widetilde q_{ij}^{(c)}$ for $\widetilde
Q$.  Write $\hat Q$ and $\hat q_{ij}^{(c)}$ for the result of applying the
above algorithm.  

It is clear that only the final step of the algorithm is relevant for 
$\hat q_{ik}$ where one of $i$ or $k$ coincides with $j$, and 
therefore that in this case $\hat q_{ij}^{(c)}=\widetilde q_{ij}^{(c)}$
as desired.  

Suppose now that neither $i$ nor $k$ coincides with $j$.  Suppose further that
in $Q$ there
are no $(0)$-coloured arrows from either $i$ or $k$ to $j$, and therefore also no
$m$-coloured arrows from $k$ to $i$ or $j$.  In this case, 
$\widetilde q_{ik}^{(c)} = q_{ik}^{(c)}$.
In the algorithm, no arrows will
be added between $i$ and $k$ in step 1, and therefore no further changes
will be made in step 2.  Thus $\hat q_{ik}^{(c)} = q_{ik}^{(c)}
= \widetilde q_{ik}^{(c)}$, as desired.

Suppose now that there are $(0)$-coloured arrows from $j$ to both $i$ and 
$k$.  In this case, $\widetilde q_{ik}^{(c)}=q_{ik}^{(c)}$.  
In this case, as discussed in the proof of well-definedness, an
equal number of $(0)$-coloured and $(m)$-coloured arrows will be introduced
at step 1.  They will therefore be cancelled at step 2.  
Thus $\hat q_{ik}^{(c)}=q_{ik}^{(c)}=
\widetilde q_{ik}^{(c)}$ as desired.

Suppose now that there is a $(0)$-coloured arrow from $j$ to $k$, but 
not from $j$ to $i$. Let the arrows from
$i$ to $j$, if any, be of colour $c$.  
At step 1 of the algorithm, we will add $q_{ij}^{(c)}q_{jk}^{(0)}$ arrows of colour 
$c$ to $Q$.  By Proposition 5.1, the arrows in $Q$ from $i$ to $k$ are of
colour $c$ or $c+1$.  
One verifies that the algorithm yields the same result as coloured
quiver mutation, in the three cases that the arrows from $i$ to $k$ in
$Q$ are of colour $c$, that they are of colour $c+1$ but there are fewer
than $q_{ij}^{(c)}q_{jk}^{(0)}$, 
and that they are of colour $c+1$ and there are at least
as many as $q_{ij}^{(c)}q_{jk}^{(0)}$.  

The final case, that there is a $(0)$-coloured arrow from $j$ to $i$
but not from $j$ to $k$, is similar to the previous one.  
\end{proof}

\section{Example: type $A_n$}

In \cite{bm1}, a certain category $\CBM$ is constructed, which is 
shown to be equivalent to the $m$-cluster category of Dynkin type $A_n$.  
The description of $\CBM$ is as follows.  
Take an $nm+2$-gon $\Pi$, with vertices labelled clockwise from 
1 to $nm+2$.  Consider
the set $X$ of diagonals $\gamma$ of $\Pi$ with the property that 
$\gamma$ divides $\Pi$ into two polygons each having a 
number of sides congruent
to 2 modulo $m$.  For each $\gamma\in X$, there is an object $A_\gamma$ in
$\CBM$.  These objects $A_\gamma$ form the indecomposables of the
additive category $\CBM$.  We shall not recall the exact definition of
the morphisms, other than to note that they are generated by the morphisms
$p_{ijk}:A_{ij} \rightarrow A_{ik}$ 
which exist provided that $ij$ and $ik$ are both diagonals in $X$,
and that, starting at $j$ and moving clockwise around $\Pi$, 
one reaches $k$ before $i$.

A collection of diagonals in $X$
is called non-crossing if its elements intersect pairwise only on the 
boundary of the polygon.  
An inclusion-maximal such collection of diagonals divides 
$\Pi$ into $m+2$-gons; we therefore refer to such a collection of
diagonals as an $m+2$-angulation.  
If we remove one diagonal $\gamma$ from an $m+2$-angulation $\Delta$, 
then the two $m+2$-gons on either side of
$\gamma$ become a single $2m+2$-gon.  We say that $\gamma$ is a {\it diameter}
of this $2m+2$-gon, since it connects vertices which are diametrically
opposite (with respect to the $2m+2$-gon).  If $\delta$ is 
another diameter of this $2m+2$-gon, then 
$(\Delta \setminus \gamma)\cup \delta$
is another maximal noncrossing collection of diagonals from $X$.  
(In particular, $\delta\in X$.)  

For $\Delta$ an $m+2$-angulation, let 
$A_\Delta=\bigamalg_{\gamma\in\Delta} A_\gamma$.  Then we have that 
$A_\Delta$ is a
basic ($m$-cluster-)tilting object for $\CBM$, 
and all basic tilting objects of 
$\CBM$ arise in this way.  
It follows from the previous discussion that if $T=A_\Delta$ is a 
basic tilting object, and $\gamma\in \Delta$, then the complements to
$A_{\Delta\setminus \gamma}$ will consist of the objects $A_\delta$ 
where $\delta$ is a diameter of the $2m+2$-gon obtained by removing 
$\gamma$ from the $m+2$-angulation determined by $\Delta$.  In fact,
we can be more precise.  Define $\delta_{(i)}$ to be the diameter 
of the $2m+2$-gon obtained by rotating the vertices of $\gamma$ by $i$ steps 
counterclockwise (within the $2m+2$-gon).  Then $A_\gamma^{(i)}=A_{\delta_{(i)}}$.

Define $Q_\Delta$ to be the coloured quiver for the tilting object
$A_\Delta$,
with vertex set $\Delta$.
Using the setup of \cite{bm1}, it is straightforward to verify:

\begin{proposition} The coloured quiver $Q_{\Delta}$ of $T=A_\Delta$ 
has an arrow from $\gamma$ to $\delta$ if and only if $\gamma$ and $\delta$ both lie
on some $m+2$-gon in the $m+2$-angulation defined by $\Delta$.  In this
case, the colour of the arrow is the number of edges forming the segment of the boundary of the 
$m+2$-gon which lies between $\gamma$ and $\delta$, counterclockwise
from $\gamma$ and clockwise from $\delta$.  
\end{proposition}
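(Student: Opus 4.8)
The plan is to prove the coloured-quiver description of $Q_\Delta$ by induction on the number of mutations needed to reach $\Delta$ from the base $m+2$-angulation, leveraging the fact (established earlier in the excerpt, via transitivity) that every $m+2$-angulation is reachable from any fixed one by a sequence of flips, and that these flips correspond to coloured quiver mutations under the Baur--Marsh equivalence $\CBM \simeq \C_m$. So the first step is to verify the claimed formula directly for one convenient base case --- the fan $m+2$-angulation, or equally well the configuration corresponding to $-\Pi$ --- where the arrows and their colours can be read off from the geometry and compared against the combinatorial quiver attached to $-\Pi$ in Section 9.

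The inductive step is the heart of the argument. I would fix an $m+2$-angulation $\Delta$ for which the formula holds, remove a diagonal $\gamma\in\Delta$, and flip it to the diameter $\delta=\delta_{(1)}$ of the resulting $2m+2$-gon, giving $\Delta'=(\Delta\setminus\gamma)\cup\delta$. By Theorem~\ref{t:main} the coloured quiver transforms by $Q_{\Delta'}=\mu_\gamma(Q_\Delta)$, using the identification $A_\gamma^{(i)}=A_{\delta_{(i)}}$ together with the exchange triangle $A_\gamma\to B_\gamma^{(0)}\to A_\gamma^{(1)}\to$. So it suffices to check that the purely \emph{geometric} flip operation on $m+2$-angulations, when read through the claimed arrow-and-colour rule, produces exactly the same coloured quiver as the combinatorial mutation rule $\mu_\gamma$ from Section~2. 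Concretely, I would compare, for each pair $\{\varepsilon,\varepsilon'\}$ of surviving diagonals, the predicted arrow data around the two new $m+2$-gons adjacent to $\delta$ against the output of $\mu_\gamma$ applied to the arrow data around the single $2m+2$-gon (equivalently the two old $m+2$-gons adjacent to $\gamma$).

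The main obstacle I expect is the bookkeeping of colours in this local geometric comparison. The colour of an arrow is a boundary arc-length measured counterclockwise from the tail diagonal and clockwise from the head, and flipping $\gamma$ to $\delta$ redistributes which diagonals share an $m+2$-gon and shifts these arc-lengths; one must check that the resulting colours match the case analysis in the mutation formula (the $c=e$, $c=e+1$, and the degree-$m$ wrap-around cases), including the $\max\{0,\dots\}$ cancellations. The cleanest way to manage this is to reduce to the local picture inside the $2m+2$-gon spanned by the two adjacent $m+2$-gons, where only a bounded number of diagonals interact, and verify the correspondence there; the rest of the polygon contributes arrows that are manifestly unchanged by both operations. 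Once the local match is confirmed, since both $Q_{\Delta'}$ and the geometrically-predicted quiver agree with $\mu_\gamma$ applied to the (inductively correct) $Q_\Delta$, the formula holds for $\Delta'$, completing the induction.

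I note that because every diagonal in $X$ lies in \emph{some} $m+2$-angulation and any two such angulations are connected by flips, it is enough to verify the local compatibility for a single flip type and then invoke the transitivity result; this avoids having to re-derive the equivalence $\CBM\simeq\C_m$ and instead uses only the explicit dictionary (objects $A_\gamma$, morphisms $p_{ijk}$, complements $A_\gamma^{(i)}=A_{\delta_{(i)}}$) recorded in the setup of \cite{bm1}.
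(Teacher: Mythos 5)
Your proposal is correct in its logical structure, but it takes a genuinely different route from the paper. The paper verifies the proposition directly from the Baur--Marsh setup: one computes, inside $\CBM$, the complements $A_\gamma^{(c)}=A_{\delta_{(c)}}$ and the minimal $\add(A_\Delta/A_\gamma)$-approximations out of them (using the generating morphisms $p_{ijk}$), and reads off which $A_\varepsilon$, $\varepsilon\in\Delta$, occur in each $B_\gamma^{(c)}$; this is a purely local computation in the geometric model and does not invoke Theorem~\ref{t:main} at all. You instead take Theorem~\ref{t:main} and transitivity of mutation as given, verify the claimed rule on one base angulation, and propagate it by checking that a single geometric flip, read through the arrow-and-colour rule, reproduces coloured quiver mutation. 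Both arguments work, and both ultimately reduce to a bounded local verification inside the $2m+2$-gon. The trade-off is worth noting: the paper's direct verification is what makes its closing remark meaningful --- namely that in type $A_n$ one can check \emph{independently} that mutation of tilting objects is given by coloured quiver mutation, so the geometric model serves as a consistency check on the main theorem. Under your derivation that remark becomes circular, since you have already consumed Theorem~\ref{t:main} to establish the proposition. What you gain is that you never need to manipulate morphisms or approximations in $\CBM$ itself, only the combinatorics of flips; but you should then carry out the local flip-versus-mutation comparison in full (including the $c=e$, $c=e+1$ and wrap-around cases and the $\max\{0,\dots\}$ cancellations), as that case analysis is the entire content of the inductive step and is no lighter than the direct computation the paper has in mind.
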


Given the proposition above, 
it is straightforward to verify directly that
$Q_\Delta$ satisfies conditions (I), (II), and (III), and that 
mutation is indeed given by the mutation of coloured quivers.  

\bigskip
{\bf Example: $A_3$, $m= 2$}

We return to the example from Section 2.  The quadrangulation of a 
decagon corresponding to the tilting object $T$ is on the
left.  The quadrangulation corresponding to $T'$ is on the
right.  Passing from the figure on the left to the figure
on the right, the diagonal 27 (which corresponds to the 
summand $I_2$) has been rotated one step counterclockwise within the hexagon
with vertices 1,2,3,4,7,10.

$$\epsfbox{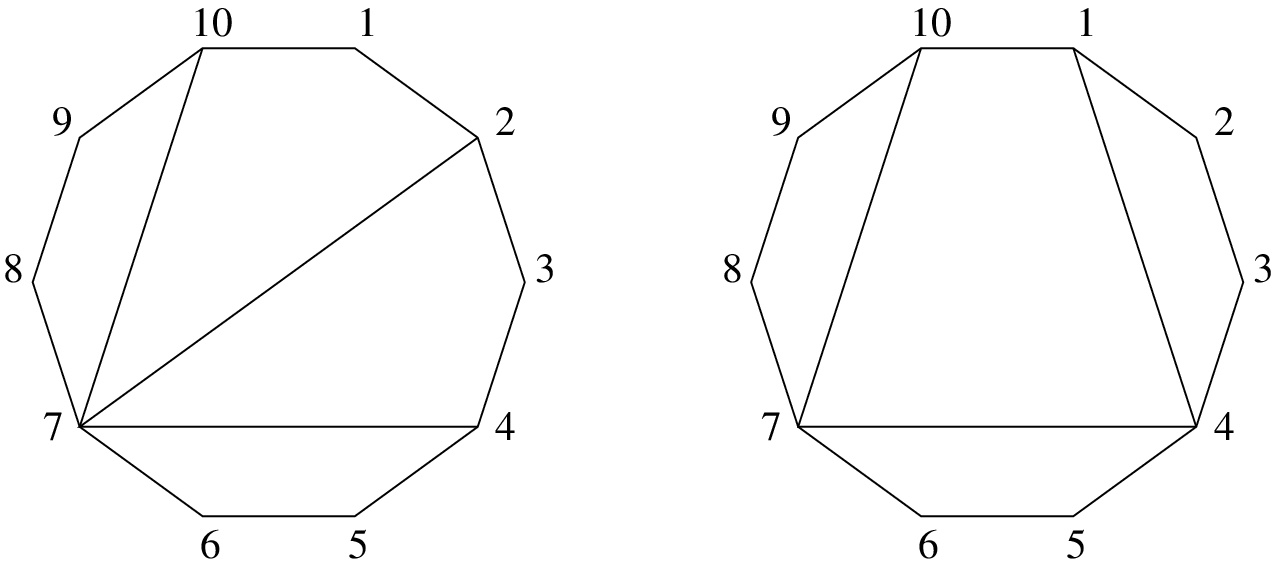}$$

%

\end{document}